\theoremstyle{plain}
\newtheorem{theorem}{Theorem}[section]
\newtheorem{dfi}[theorem]{Definition}
\newtheorem{cor}[theorem]{Corollary}
\newtheorem{lem}[theorem]{Lemma}
\newtheorem{proposition}[theorem]{Proposition}
\newtheorem{definition}[theorem]{Definition}
\newtheorem{corollary}[theorem]{Corollary}
\newtheorem{remark}[theorem]{Remark}
\newtheorem{lemma}[theorem]{Lemma}
\newcommand{\rn}[1]{{\mathbb R}^{#1}}
\newcommand{\R}{\mathbb R}
\newcommand{\supp}{\mathrm{supp}\;}
\newcommand{\he}[1]{{\mathbb H}^{#1}}
\newcommand{\cov}[1]{{\bigwedge\nolimits^{#1}{\mfrak h}}}
\newcommand{\covH}[1]{{\bigwedge\nolimits^{#1}{\mfrak h}}}
\newcommand{\vetH}[1]{{\bigwedge\nolimits_{#1}{\mfrak h}}}
\newcommand{\covh}[1]{{\bigwedge\nolimits^{#1}{\mfrak h_1}}}
\newcommand{\scal}[2]{\langle {#1} , {#2}\rangle}
\newcommand{\Scal}[2]{\langle {#1} \vert {#2}\rangle}
\newcommand{\scalp}[3]{\langle {#1} , {#2}\rangle_{#3}}
\newcommand{\ccheck}{{\vphantom i}^{\mathrm v}\!\,}
\newcommand{\mc}{\mathcal }
\newcommand{\mfrak}{\mathfrak}
\newcommand{\WO}[3]{\mathop{W}\limits^\circ{}\!^{{#1},{#2}}
(#3)}
\begin{document}

\today

\title[Poincar\'e and Sobolev inequalities for differential forms
] 
{Poincar\'e and Sobolev inequalities for differential forms in Heisenberg groups}

\author[Annalisa Baldi, Bruno Franchi, Pierre Pansu]{
Annalisa Baldi\\
Bruno Franchi\\ Pierre Pansu
}

\keywords{Heisenberg groups, differential forms, Sobolev-Poincar\'e inequalities, contact manifolds, homotopy formula}

\subjclass{58A10,  35R03, 26D15,  43A80,
46E35, 35F35}

\maketitle

\begin{center}
\textbf{Abstract}
\end{center}
Poincar\'e and Sobolev inequalities for differential forms on Heisenberg balls, involving Rumin's differentials, are given. Furthermore, a global homotopy of Rumin's complex which improves differentiability of Rumin forms is provided on any bounded geometry contact manifold.

\section{Introduction}

\subsection{Sobolev and Poincar\'e inequalities for differential forms}

Sobolev inequality in $\R^n$ deals with compactly supported 0-forms, i.e. functions $u$ on $\R^n$, and 1-forms, their differentials $du$. It states that 
\begin{eqnarray*}
\|u\|_q \leq C_{p,q,n}\|du\|_p
\end{eqnarray*} 
whenever
\begin{eqnarray*}
1\leq p,q< +\infty,\quad \frac{1}{p}-\frac{1}{q}=\frac{1}{n}.
\end{eqnarray*}

A local version, for functions supported in the unit ball, holds under the weaker assumption
\begin{eqnarray*}
1\leq p,q< +\infty,\quad \frac{1}{p}-\frac{1}{q}\leq\frac{1}{n}.
\end{eqnarray*}

Poincar\'e's inequality is a variant for functions $u$ defined on but not necessarily compactly supported in the unit ball $B$. It states that there exists a real number $c_u$ such that
\begin{eqnarray*}
\|u-c_u\|_q \leq C_{p,q,n}\|du\|_p.
\end{eqnarray*} 
Alternatively, given a closed 1-form $\omega$ on $B$, there exists a function $u$ on $B$ such that $du=\omega$ on $B$, and such that
\begin{eqnarray*}
\|u\|_q \leq C_{p,q,n}\|\omega\|_p.
\end{eqnarray*} 
This suggests the following generalization for higher degree differential forms.

Let $M$ be a Riemannian manifold. We say that a \emph{strong Poincar\'e inequality} $(p,q)$-Poincar\'e$(k)$ holds on $M$, if 
there exists a positive constant $C=C(M,p,q)$ such that for every closed $k$-form $\omega$ on $M$, belonging to $L^p$, there exists a $k-1$-form $\phi$ such that $d\phi=\omega$ and
\begin{eqnarray*}
\|\phi\|_q \leq C\,\|\omega\|_p.
\end{eqnarray*} 
A \emph{strong Sobolev inequality} $(p,q)$-Sobolev$(k)$ holds on $M$, if for every closed compactly supported $k$-form $\omega$ on $M$, belonging to $L^p$, there exists a compactly supported $k-1$-form $\phi$ such that $d\phi=\omega$ and
\begin{eqnarray*}
\|\phi\|_q \leq C\,\|\omega\|_p.
\end{eqnarray*}
Both statements should be thought of as quantitative versions of the statement that every closed $k$-form is exact.

For Euclidean domains, the validity of Poincar\'e inequality is sensitive to irregularity of boundaries. One way to eliminate such a dependance is to allow a loss on domain. Say an \emph{interior Poincar\'e inequality} $(p,q)$-Poincar\'e$(k)$ holds on $M$ if for every small enough $r>0$ and large enough $\lambda\geq 1$, there exists a constant $C=C(M,p,q,r,\lambda)$ such that for every $x\in M$ and every closed $k$-form $\omega$ on $B(x,\lambda r)$, belonging to $L^p$, there exists a $(k-1)$-form $\phi$ on $B(x,r)$ such that $d\phi=\omega$ on $B(x,r)$ and
\begin{eqnarray*}
\|\phi\|_{L^q(B(x,r))} \leq C\,\|\omega\|_{L^p(B(x,\lambda r))}.
\end{eqnarray*} 
For \emph{interior Sobolev inequalities}, merely add the word compactly supported. Both properties should be thought of as quantitative versions of the statement that, locally, every closed $k$-form is exact.

It turns out that in several situations, the loss on domain is harmless. This is the case for $L^{q,p}$-cohomological applications, see \cite{Pcup}.

\subsection{Contact manifolds}

A contact structure on a manifold $M$ is a smooth distribution of hyperplanes $H$ which is maximally nonintegrable in the following sense: if $\theta$ is a locally defined smooth 1-form such that $H=\mathrm{ker}(\theta)$, then $d\theta$ restricts to a non-degenerate 2-form on $H$. A contact manifold is the data of a smooth manifold $M$ and a contact structure $H$ on $M$. $M$ must be odd-dimensional. Contactomorphisms are contact structure preserving diffeomorphisms between contact manifolds.
The prototype of a contact manifold is the Heisenberg group $\he n$, the simply connected Lie group whose Lie algebra is the central extension $\mathfrak{h}=\mathfrak{h}_1\oplus\mathfrak{h}_2$, $\mathfrak{h}_2=\R=Z(\mathfrak{h})$, with bracket $\mathfrak{h}_1\otimes\mathfrak{h}_1\to\mathfrak{h}_2=\R$ being a non-degenerate skew-symmetric 2-form. The contact structure is obtained by left-translating $\mathfrak{h}_1$. According to Darboux, every contact manifold is locally contactomorphic to $\he n$. The Heisenberg Lie algebra admits a one parameter group of automorphisms $\delta_t$,
\begin{eqnarray*}
\delta_t=t\textrm{ on }\mathfrak{h}_1,\quad \delta_t=t^2 \textrm{ on }\mathfrak{h}_2,
\end{eqnarray*}
which are analogues of Euclidean homotheties. However, differential forms on $\mathfrak{h}$ split into 2 eigenspaces under $\delta_t$, therefore de Rham complex lacks scale invariance under these anisotropic dilations. 

A substitute for de Rham's complex, that recovers scale invariance under $\delta_t$ has been defined by M. Rumin, \cite{rumin_jdg}. It makes sense for arbitrary contact manifolds $(M,H)$. Let $\Omega^\bullet$ denote the space of smooth differential forms on $M$, let $\mathcal{I}^\bullet$ denote the differential ideal generated by 1-forms that vanish on $H$, let $\mathcal{J}^\bullet$ denote its annihilator. Exterior differential $d:\Omega^\bullet\to\Omega^\bullet$ descends to first order differential operators $d_c:\Omega^\bullet/\mathcal{I}^\bullet\to\Omega^\bullet/\mathcal{I}^\bullet$ and $d_c:\mathcal{J}^\bullet\to \mathcal{J}^\bullet$. It turns out that $\Omega^h/\mathcal{I}^h=0$ for $h\geq n+1$ and $\mathcal{J}^h=0$ for $h\leq n$. If $\omega\in \Omega^n/\mathcal{I}^n$, there is a unique lift $\tilde\omega\in\Omega^n$ such that $d\tilde\omega\in\mathcal{J}^{n+1}$. Set $d_c\omega=d\tilde\omega$. This defines a linear second order differential operator $\Omega^n/\mathcal{I}^{n+1}\to\mathcal{J}^{n+1}$ which completes Rumin's complex, which is homotopic to de Rham's complex. The homotopy is a first order differential operator.

Elements of $\Omega^\bullet/\mathcal{I}^\bullet$ and $\mathcal{J}^\bullet$ can be viewed as smooth sections of sub-bundles $\mathcal{E}_0^\bullet$ of $\Lambda^\bullet H^*$ and $\Lambda^\bullet H^*\otimes (TM/H)$ respectively. A Euclidean norm on $H$ determines Euclidean norms on $\Lambda^\bullet H^*$. Locally, a 1-form $\theta$ vanishing on $H$ such that $|d\theta_{|H}|=1$ is uniquely determined up to sign, hence a norm on $TM/H$. The measure on $M$ defined by the locally defined top degree form $\theta\wedge(d\theta)^n$ only depends on the norm on $H$ as well. Whence $L^p$-norms on spaces of sections of bundles $\mathcal{E}_0^\bullet$.

The data of $(M,H)$ equipped with a Euclidean norm defined on sub-bundle $H$ only is called a \emph{sub-Riemannian} contact manifold. Poincar\'e and Sobolev inequalities for differential forms make sense on contact sub-Riemannian manifolds: merely replace $d$ with $d_c$. All left-invariant sub-Riemannian metrics on Heisenberg group are bi-Lipschitz equivalent, hence we may refer to sub-Riemannian Heisenberg group without referring to a specific left-invariant metric. On the other hand, in absence of symmetry assumptions, large scale behaviours of sub-Riemannian contact manifolds are diverse.

\subsection{Results on Poincar\'e and Sobolev inequalities}

In this paper, we prove strong contact Poincar\'e and Sobolev inequalities and interior contact Poincar\'e and Sobolev inequalities in Heisenberg groups, where the word ``contact'' is meant to stress that the exterior differential is replaced by Rumin's $d_c$. The range of parameters differs slightly from the Euclidean case, due to the fact that $d_c$ has order 2 in middle dimension. Let $h\in \{0,\ldots,2n+1\}$. Say that assumption 
$E(h,p,q,n)$ holds if  $1<p\leq q<\infty$ satisfy
\begin{eqnarray*}
\frac{1}{p}-\frac{1}{q}=\begin{cases}
\frac{1}{2n+2}      & \text{ if }h\not=n+1, \\
\frac{2}{2n+2}      & \text{ if }h=n+1.
\end{cases}
\end{eqnarray*}
Say that assumption
$I(h,p,q,n)$ holds if  $1<p\leq q<\infty$ satisfy
\begin{eqnarray*}
\frac{1}{p}-\frac{1}{q}\leq\begin{cases}
\frac{1}{2n+2}      & \text{ if }h\not=n+1, \\
\frac{2}{2n+2}      & \text{ if }h=n+1.
\end{cases}
\end{eqnarray*}

\begin{theorem}\label{strongglobal}
Under assumption $E(h,p,q,n)$, strong $(p,q)$-Poincar\'e and $(p,q)$-Sobolev inequalities hold for $h$-forms on $\he n$. 
\end{theorem}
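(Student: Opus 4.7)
The plan is to construct a global homotopy operator for Rumin's complex on $\he n$ and apply Hardy--Littlewood--Sobolev estimates on stratified groups. More precisely, I would build an operator $K \colon \Omega^h \to \Omega^{h-1}$ satisfying $d_c K + K d_c = \mathrm{Id}$ on a dense class of Rumin forms, and show that $K$ is bounded $L^p(\he n) \to L^q(\he n)$ exactly under assumption $E(h,p,q,n)$. For a closed $\omega \in L^p$, the form $\phi := K\omega$ then gives $d_c \phi = \omega$ with $\|\phi\|_q \leq C\|\omega\|_p$, which is the strong Poincar\'e inequality; a cut-off/correction procedure will produce compact support for the strong Sobolev version.

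\emph{Step 1 (Rumin Laplacian and fundamental solution).} Rumin's Laplacian $\Delta_R$ on $\he n$ is defined degree-by-degree so as to have a uniform homogeneous order in each degree: order $2$ when $h \notin \{n,n+1\}$, with $\Delta_R = d_c d_c^{\ast} + d_c^{\ast} d_c$, and order $4$ in the two middle degrees via biharmonic modifications such as $\Delta_R = (d_c d_c^{\ast})^2 + d_c^{\ast} d_c$ in degree $n$. Being left-invariant, homogeneous under the dilations $\delta_t$ and Rockland, $\Delta_R$ admits a unique homogeneous convolution fundamental solution $G$. Its kernel is smooth off the origin, with homogeneous degree $-(Q-2)$ in non-middle degree and $-(Q-4)$ in middle degree, where $Q=2n+2$ is the homogeneous dimension.

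\emph{Step 2 (Construction of $K$).} Since $\Delta_R$ commutes with $d_c$, so does $G$. Setting $K := d_c^{\ast} G$, one computes in non-middle degree
\[
d_c K + K d_c = d_c d_c^{\ast} G + d_c^{\ast} d_c G = \Delta_R G = \mathrm{Id}.
\]
In middle degree one uses the corresponding biharmonic form of $\Delta_R$ together with $d_c^2 = 0$ to check that, on closed forms, still $\omega = d_c(d_c^{\ast} G\omega)$. The kernel of $K$ on $\he n$ is then homogeneous of degree $-(Q-1)$ when $h \neq n+1$ (since $d_c^{\ast}$ is a step-$1$ operator there) and of degree $-(Q-2)$ when $h = n+1$ (since $d_c^{\ast}$ then inverts the order-$2$ differential of Rumin's complex). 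These orders reflect exactly the order of the step of $d_c$ being inverted.

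\emph{Step 3 ($L^p$-$L^q$ estimate).} By the Folland--Stein Hardy--Littlewood--Sobolev theorem on stratified Lie groups, convolution with a kernel homogeneous of degree $-(Q-a)$ is bounded from $L^p(\he n)$ to $L^q(\he n)$ precisely when $\frac{1}{p} - \frac{1}{q} = \frac{a}{Q}$. Taking $a=1$ for $h \neq n+1$ and $a=2$ for $h = n+1$ yields exactly the exponent balance prescribed by $E(h,p,q,n)$, which establishes the strong Poincar\'e inequality.

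\emph{Step 4 (Compact support).} For the Sobolev version, $\phi_0 := K\omega$ is not compactly supported even when $\omega$ is, but it is closed outside $\mathrm{supp}(\omega)$ and decays at infinity at the rate fixed by the homogeneity of the kernel. I would produce a compactly supported primitive $\phi$ by truncating $\phi_0$ with a smooth cutoff $\chi$ equal to $1$ on a ball slightly larger than $\mathrm{supp}(\omega)$: the form $\chi\phi_0$ is compactly supported and satisfies $d_c(\chi\phi_0) = \omega + e$, where the error $e = [d_c,\chi]\phi_0$ is a closed, compactly supported form concentrated in an annulus far from $\mathrm{supp}(\omega)$, with $\|e\|_p$ controlled by the off-diagonal decay of the kernel of $K$. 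Iterating $K$ on successive errors produces a geometric series converging in $L^q$ to a compactly supported primitive, hence the strong Sobolev bound. I expect the main technical obstacle to lie here: quantifying the off-diagonal behaviour of the kernels of $K$ precisely enough (including in middle degree, where the double derivative of $G$ must be handled) to ensure convergence of the iteration while maintaining the sharp Sobolev exponent.
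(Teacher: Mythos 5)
Your Steps 1--3 are essentially the paper's proof of the strong Poincar\'e half: the paper inverts Rumin's modified Laplacian by a homogeneous matrix-valued kernel (Theorem \ref{global solution}), derives the homotopy formulas with kernels of type $1$ (type $2$ in degree $n+1$) in Proposition \ref{homotopy formulas}, and concludes by Folland's convolution estimates (Theorem \ref{hls folland}, Corollary \ref{strong poincare}). Two small caveats. First, the commutation of $\Delta_{\he{},h}^{-1}$ with $d_c$, which you assert in one line, is not formal: the paper's Lemma \ref{comm} proves it by showing the difference of the two sides is annihilated by the Laplacian and decays at infinity, hence vanishes by a Liouville-type argument; this step deserves a proof. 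Second, in degree $n$ the primitive is not $\delta_c\Delta^{-1}\omega$ but $\delta_c d_c\delta_c\Delta^{-1}\omega$ (only this combination is a kernel of type $1$ and satisfies $d_c(\delta_c d_c\delta_c\Delta^{-1}\omega)=\omega$ on closed forms, using $(d_c\delta_c)^2+\delta_c d_c=\Delta$); your stated homogeneity degrees are the right ones, but for a different operator than the one you wrote.

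Step 4 is where your route genuinely diverges from the paper, and as described it has a gap. In your iteration the $k$-th error $e_{k}=[d_c,\chi_{k-1}]Ke_{k-1}$ is supported where $d\chi_{k-1}\neq 0$. If the cutoffs $\chi_k$ are taken at radii growing to infinity (which is what makes $\|e_k\|$ small, via the decay of the kernel), then the partial sums $\sum_k(-1)^k\chi_kKe_k$ have supports escaping to infinity and the limit is not compactly supported; if instead the cutoffs are confined to a fixed bounded region, there is no mechanism forcing $\|e_{k+1}\|_p<\theta\|e_k\|_p$ with $\theta<1$, since $K$ does not preserve supports and no smallness is available. Note also that in degree $n+1$ the commutator $[d_c,\chi]$ is a first-order operator, so controlling $e$ requires controlling a derivative of $K\omega$, which costs one unit of homogeneity. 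The paper avoids all of this by localizing differently: it splits the kernel of $K$ into a compactly supported piece $T$ (which therefore preserves supports up to a fixed enlargement) plus a smooth piece, yielding $1=d_cT+Td_c+S$ with $S$ smoothing (Theorem \ref{smoothing}); for compactly supported closed $\omega$ it applies to the smooth closed form $S\omega$ the support-preserving Euclidean homotopy of Mitrea--Mitrea--Monniaux conjugated by $\Pi_E,\Pi_{E_0}$ (the operator $J$ of \eqref{may 31 eq:2}), so that $\phi=(JS+T)\omega$ is compactly supported with $\|\phi\|_q\le C\|\omega\|_p$; finally, since at the sharp exponent of $E(h,p,q,n)$ the constant in \eqref{sobolev interior ball} is invariant under the dilations $\delta_r$, the ball inequality upgrades to the global strong Sobolev inequality (Corollary \ref{strong sobolev}). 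You would need either to adopt such a support-preserving local homotopy or to supply a convergent replacement for your truncation scheme.
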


\begin{theorem}
Under assumption $I(h,p,q,n)$, interior $(p,q)$-Poincar\'e and $(p,q)$-Sobolev inequalities hold for $h$-forms on $\he n$. 
\end{theorem}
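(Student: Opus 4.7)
My plan is to deduce the interior $(p,q)$-Poincar\'e and $(p,q)$-Sobolev inequalities from the strong ones in Theorem~\ref{strongglobal} by a two-step reduction. The first step is a H\"older reduction: if $q_0$ denotes the Sobolev conjugate defined by $E(h,p,q_0,n)$, then under $I(h,p,q,n)$ we have $q\leq q_0$, and on any ball $B(x,r)\subset\he n$,
\[
\|\phi\|_{L^q(B(x,r))}\leq |B(x,r)|^{1/q-1/q_0}\,\|\phi\|_{L^{q_0}(B(x,r))}.
\]
Hence, once the interior inequality is established at the sharp exponent $(p,q_0)$, it follows for every $(p,q)$ with $q\leq q_0$, with constants depending on $r$, which is admissible by the definition of an interior inequality.

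The second step is to localize the global strong inequality to balls at the critical exponent. Fix $x\in\he n$ and choose a smooth cut-off $\chi$ equal to $1$ on $B(x,r)$, supported in $B(x,\lambda r)$, with Rumin derivatives controlled by inverse powers of $(\lambda-1)r$. Given a closed $h$-form $\omega\in L^p(B(x,\lambda r))$, I would use the local exactness of Rumin's complex on the contractible ball $B(x,\lambda r)$ to produce a primitive $\phi_0$ with $d_c\phi_0=\omega$, chosen via a bounded local homotopy operator so that $\|\phi_0\|_{L^{q_0}(B(x,\lambda r))}\leq C\|\omega\|_{L^p(B(x,\lambda r))}$. Such a bounded homotopy is furnished by the global homotopy of Rumin's complex announced in the abstract, combined with Theorem~\ref{strongglobal} applied to suitably extended data. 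Setting $\phi:=\chi\phi_0$, extended by zero outside $B(x,\lambda r)$, one has $d_c\phi=\omega$ on $B(x,r)$, since $\chi\equiv 1$ there and all Rumin derivatives of $\chi$ vanish on that ball, and
\[
\|\phi\|_{L^{q_0}(B(x,r))}=\|\phi_0\|_{L^{q_0}(B(x,r))}\leq C\|\omega\|_{L^p(B(x,\lambda r))}.
\]
For the Sobolev variant, if $\omega$ is compactly supported in $B(x,\lambda r)$, an additional inner cut-off on $\phi_0$, together with the Leibniz-type correction coming from the homotopy identity $d_c K+K d_c=I$, produces a primitive with the required compact support in the inner ball.

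The main obstacle is producing the local primitive with a sharp Sobolev bound. In the Euclidean de~Rham setting this is handled by the Iwaniec--Lutoborski homotopy on star-shaped domains, but in Heisenberg the anisotropic dilations $\delta_t$ and the second-order character of $d_c$ in middle dimension $h=n+1$ prevent a direct transposition. The global bounded homotopy for Rumin's complex built in the paper, combined with the critical-exponent strong inequality of Theorem~\ref{strongglobal}, is precisely the tool one needs. In middle dimension the Leibniz rule must be replaced by its second-order analog, so the cut-off step $\phi:=\chi\phi_0$ generates extra terms pairing first derivatives of $\chi$ with first derivatives of $\phi_0$; these terms vanish on $B(x,r)$ where $\chi$ is locally constant, which is exactly why the loss on the domain $\lambda>1$ is essential to the argument.
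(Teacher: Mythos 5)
Your first step (the H\"older reduction from the subcritical pair $(p,q)$ to the critical pair $(p,q_0)$ on a bounded ball) is correct and unobjectionable, and the final cutting step $\phi:=\chi\phi_0$ is harmless. But the heart of the argument --- producing a primitive $\phi_0$ on $B(x,\lambda r)$ with $d_c\phi_0=\omega$ and $\|\phi_0\|_{L^{q_0}(B(x,\lambda r))}\leq C\|\omega\|_{L^p(B(x,\lambda r))}$ --- is exactly the statement to be proved (it is a strong Poincar\'e inequality on a ball, listed in the paper as an open question), and your justification for it, ``Theorem~\ref{strongglobal} applied to suitably extended data,'' hides the entire difficulty. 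A $d_c$-closed $L^p$ form on a ball does not extend to a $d_c$-closed $L^p$ form on $\he n$ with norm control; and if instead you cut off the form, $\tilde\omega=\chi\omega$, and apply the global homotopy $K_0=\delta_c\Delta_c^{-1}$, you get $\omega=d_c(K_0\tilde\omega)+K_0([d_c,\chi]\omega)$ on the inner ball, where the second term is a nonlocal convolution that does \emph{not} vanish on $B(x,r)$ even though $\chi\equiv1$ there. It is smooth and $d_c$-closed on the inner ball (the kernel is evaluated away from its singularity), but it still has to be integrated by some \emph{local} primitive-finding device with bounds. That device cannot again be the global homotopy (circular), and this is precisely where the paper brings in the Iwaniec--Lutoborski (resp.\ Mitrea--Mitrea--Monniaux) Cartan homotopy conjugated by the Rumin projections, $K=\Pi_{E_0}\Pi_E K_{\mathrm{Euc}}\Pi_E$: it loses derivatives because $\Pi_E$ is a differential operator, which is acceptable only because it is applied to an already-smoothed remainder. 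You dismiss the Iwaniec--Lutoborski route as not transposable, but in fact it is an indispensable half of the proof; the other half is the smoothing, which the paper obtains not by cutting off the form but by cutting off the \emph{kernel} ($k_0=\psi_Rk_0+(1-\psi_R)k_0$), yielding $I=d_cT+Td_c+S$ with $T$ local and $S$ smoothing and commuting with $d_c$.

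A secondary gap: in middle dimension the commutator $[d_c,\chi]$ is a first-order operator (Lemma~\ref{leibniz}), so $[d_c,\chi]\omega$ for $\omega\in L^p$ only lives in $W^{-1,p}$; any version of your argument must therefore establish $W^{-1,p}\to L^p$ (indeed $W^{-2,p}\to L^p$ for the degree-$(n+1)$ kernel) bounds for the relevant operators, which is the content of Theorem~\ref{smoothing}~i) and is not addressed by remarking that the offending terms ``vanish on $B(x,r)$ where $\chi$ is locally constant'' --- they do not, because the operators involved are nonlocal.
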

Precise formulations of interior Poincar\'e and Sobolev inequalities are given in section \ref{poincare}.

Here is a sample consequence of these results. Combining both theorems with results from \cite{Pcup}, we get
\begin{cor}
Under assumption $E(h,p,q,n)$, the $\ell^{q,p}$-cohomology in degree $h$ of $\he n$ vanishes.
\end{cor}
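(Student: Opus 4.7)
The plan is to combine the strong Poincar\'e inequality of Theorem~\ref{strongglobal} with the cohomological machinery of \cite{Pcup}, using the interior Poincar\'e inequality as the local ingredient that identifies coarse $\ell^{q,p}$-cohomology with an analytic $L^{q,p}$-cohomology built from the Rumin complex.

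First I would invoke a de Rham-type theorem from \cite{Pcup}: for a contact sub-Riemannian manifold of bounded geometry, the $\ell^{q,p}$-cohomology is isomorphic to the cohomology of the complex whose cochains are $L^p$ Rumin forms, with coboundary $d_c$ taken in the sense of distributions. The hypotheses of such a theorem are precisely interior Poincar\'e-type inequalities on balls, with controlled loss on domain, in the degrees adjacent to $h$. Since $E(h,p,q,n)$ implies $I(h,p,q,n)$, the second theorem of the introduction furnishes exactly those local estimates on $\he n$. Hence the $\ell^{q,p}$-cohomology in degree $h$ of $\he n$ is represented by the quotient
\[
\frac{\{\omega \in L^p(\he n;\mathcal{E}_0^h):\; d_c\omega=0\}}{d_c\bigl(\{\phi\in L^q(\he n;\mathcal{E}_0^{h-1}):\; d_c\phi\in L^p\}\bigr)}.
\]

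Next I would apply the strong $(p,q)$-Poincar\'e inequality of Theorem~\ref{strongglobal} in degree $h$: every $d_c$-closed $L^p$ Rumin $h$-form $\omega$ on $\he n$ admits a primitive $\phi$ of degree $h-1$ with $d_c\phi=\omega$ and $\|\phi\|_q\le C\|\omega\|_p$. Since $\phi\in L^q$ and $d_c\phi=\omega\in L^p$, $\phi$ is an admissible cochain for the denominator above, so the quotient is zero, which is exactly the vanishing of $\ell^{q,p}$-cohomology in degree $h$.

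The new analytic input, namely the strong and interior Poincar\'e inequalities in the Rumin complex, is supplied by the two theorems of the paper; once it is in hand the deduction above is formal. The step I am quietly outsourcing to \cite{Pcup} is the most delicate part of the argument: the identification of coarse $\ell^{q,p}$-cohomology with the analytic $L^{q,p}$-cohomology. This rests on a partition-of-unity and smoothing construction that turns an interior Poincar\'e estimate into a bounded cochain homotopy between a simplicial (or \v{C}ech) model on a discretization of $\he n$ and the de Rham--Rumin model; the non-trivial point is to do this uniformly on the scale of balls on which the interior inequality is available. With that identification in place, the vanishing conclusion follows at once from Theorem~\ref{strongglobal}.
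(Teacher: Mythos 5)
Your proposal follows exactly the route the paper intends: the corollary is stated there with the one-line justification ``Combining both theorems with results from \cite{Pcup}'', i.e. the interior inequalities (implied by $E(h,p,q,n)\Rightarrow I(h,p,q,n)$) feed the $\ell^{q,p}$-to-$L^{q,p}$ identification of \cite{Pcup}, and the strong inequality of Theorem~\ref{strongglobal} then kills the analytic cohomology. Your write-up is a correct and somewhat more explicit rendering of the same argument, including an honest flag of the part outsourced to \cite{Pcup}.
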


\subsection{Bounded geometry and smoothing}

Along the way, we construct local smoothing operators for differential forms. They can be combined to yield a global smoothing operator on sub-Riemannian contact manifolds, which has independent interest (see Theorem
\ref{1.5} below).  This operator is bounded on $L^p$ provided the sub-Riemannian metric has bounded geometry in the following sense.

\begin{dfi}\label{contact}
Let $k\geq 2$. Let $B(e,1)$ denote the unit sub-Riemannian ball in $\he n$. We say that a sub-Riemannian contact manifold $(M,H,g)$ has \emph{bounded $C^k$-geometry} is there exist constants $r>0$, $C$ such that, for every $x\in M$, if we denote by $B(x,r)$ the sub-Riemannian ball for $(M,H,g)$ centered at $x$ and of radius $r$,
there exists a contactomorphism (i.e. a diffeomorphism preserving the contact forms) $\phi_x : B(e,1)\to M$
\begin{enumerate}
  \item $B(x,r)\subset\phi_x(B(e,1))$.
  \item $\phi_x$ is $C$-bi-Lipschitz.
  \item Coordinate changes $\phi_x\circ\phi_y^{-1}$ and their first $k$ derivatives with respect to unit left-invariant horizontal vectorfields are bounded by $C$.
\end{enumerate}
\end{dfi}

On sub-Riemannian Heisenberg balls, Sobolev spaces can be defined as follows. Fix an orthonormal basis of left-invariant vector fields $W_i$. Express forms in this frame, and differentiate along these vector-fields only. Let $\ell=0,\ldots,k$. Say that a differential form on unit ball $B$ belongs to $W^{\ell,p}$ if all derivatives up to order $k$ of its components belong to $L^p(B)$. Using $C^k$-bounded charts, this local notion extends to $C^k$-bounded geometry sub-Riemannian contact manifolds $M$, and the global $W^{k,p}$ norm on globally defined differential forms is defined by
\begin{eqnarray*}
\left(\sum_j\|\omega_{|B(x_j,r)}\|_{W^{k,p}(B(x_j,r))}\right)^{1/p},
\end{eqnarray*}
where $x_i$ is an $r$-dense uniformly discrete subset of $M$ (it will be shown in section \ref{function spaces} that this norm does not depend on choices, up to multiplicative constants). By duality, Sobolev spaces with negative $\ell=-k+1,\ldots,-1$ can be defined.

\begin{theorem}\label{1.5}
Let $(M,H,g)$ be a sub-Riemannian contact manifold of bounded $C^k$-geometry. Under assumption $I(h,p,q,n)$, there exist operators $S$ and $T$ on $h$-forms on $M$ which are bounded from $W^{j-1,p}$ to $W^{j,q}$ for all $0\leq j\leq k$, and such that $1=S+d_c T +Td_c$. 
\end{theorem}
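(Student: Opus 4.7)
The plan is to proceed in three stages: (i) build a local homotopy with smoothing on a single Heisenberg ball, (ii) transport it to each chart of $M$ via the bounded geometry atlas, (iii) glue with a partition of unity into a global homotopy, absorbing the gluing errors into the smoothing term $S$.

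For (i), I would produce operators $S_0, T_0$ on $2$-forms on $B(e,1)\subset\he n$ with
$1=S_0+d_c T_0+T_0 d_c$,
where $T_0$ gains one derivative and the Sobolev exponent $q$ dictated by $I(h,p,q,n)$, and $S_0$ is genuinely smoothing (bounded from $W^{j-1,p}$ to $W^{j,q}$ for all admissible $j$). The construction should combine two ingredients already implicit in the first two theorems: a Rumin-adapted integral kernel giving a primitive (this is essentially what underlies the interior Poincar\'e/Sobolev inequalities of Theorem~1.2, and accounts for the $\tfrac{2}{2n+2}$ jump in middle degree where $d_c$ is second order), composed with a fixed smooth cutoff-convolution regularizer to turn the primitive operator into a genuine parametrix-style homotopy. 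The output is a local homotopy formula with the correct $W^{j-1,p}\to W^{j,q}$ bounds on $B(e,1)$.

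For (ii), fix an $r/3$-dense, uniformly discrete family $\{x_j\}\subset M$ (which exists by bounded geometry) and pull back $S_0,T_0$ along the contactomorphisms $\phi_{x_j}$ from Definition~\ref{contact}. Since $\phi_{x_j}$ preserves the contact structure, it intertwines $d_c$, and since coordinate changes $\phi_{x_j}\circ\phi_{x_i}^{-1}$ are $C^k$-bounded, the transported operators $S_j,T_j$ satisfy uniform $W^{j-1,p}\to W^{j,q}$ bounds on $\phi_{x_j}(B(e,1))\supset B(x_j,r)$.

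For (iii), choose a smooth partition of unity $\{\chi_j\}$ subordinate to $\{B(x_j,r)\}$ with all horizontal derivatives uniformly bounded up to order $k$; such a partition exists by bounded $C^k$-geometry. The naive ansatz $T\omega=\sum_j T_j(\chi_j\omega)$, $S\omega=\sum_j S_j(\chi_j\omega)$ yields
$S\omega+d_cT\omega+Td_c\omega=\omega+\sum_j\bigl([d_c,\chi_j]T_j-T_j[d_c,\chi_j]\bigr)\omega$
modulo compatibility terms, where $[d_c,\chi_j]$ is the commutator. The main obstacle is that in degree $h=n+1$ the operator $d_c$ has order two, so $[d_c,\chi_j]$ is a first-order differential operator (not just multiplication by $d\chi_j$ as in de Rham). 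Consequently the commutators do not vanish term-by-term and must be treated as a bounded perturbation. Because $T_j$ is itself smoothing by one derivative and $\chi_j$ is uniformly $C^k$-bounded, the composition $[d_c,\chi_j]T_j$ is bounded from $W^{j-1,p}$ to $W^{j,q}$ uniformly in $j$, and the sum converges with uniformly finite overlap. Writing the resulting error as $E\omega$, the operator $1+E$ is invertible (or can be made so by the same construction iterated once more, using the explicit structure of commutators in Rumin's complex described just before Theorem~\ref{1.5}), and absorbing $(1+E)^{-1}$ into both $T$ and $S$ yields the desired global formula $1=S+d_cT+Td_c$ with the stated mapping properties. The delicate point throughout is the uniform bookkeeping of orders in the middle degree, which is where the $C^k$ hypothesis for $k\geq 2$ in Definition~\ref{contact} is used.
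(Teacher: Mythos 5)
Your overall architecture (local homotopy with smoothing on a Heisenberg ball, transport by the bounded-geometry charts, gluing by a partition of unity) is the same as the paper's, and stages (i) and (ii) match Theorem \ref{smoothing} and Lemma \ref{pullback}. The gap is in stage (iii). First, with the ansatz $T\omega=\sum_j T_j(\chi_j\omega)$, $S\omega=\sum_j S_j(\chi_j\omega)$ the error is exactly $\sum_j T_j[\chi_j,d_c]\omega$ (write $\chi_j d_c\omega=d_c(\chi_j\omega)+[\chi_j,d_c]\omega$ and use the local homotopy on $\chi_j\omega$); the extra $[d_c,\chi_j]T_j$ term you wrote does not occur. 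More seriously, your plan to invert $1+E$ does not work: $E$ is bounded but in no sense small, so the Neumann series need not converge and $1+E$ has no reason to be invertible; and even granting invertibility, $(1+E)^{-1}$ does not obviously commute with $d_c$, so ``absorbing'' it into $T$ and $S$ destroys the form $1=S+d_cT+Td_c$ (you would get $T'd_c(1+E)^{-1}$, not $T''d_c$). The paper's resolution is much simpler and requires no inversion at all: \emph{define} $S:=\sum_j S_j\chi_j + T_j[\chi_j,d_c]$, i.e.\ put the commutator error into $S$ itself; then the identity $1=S+d_cT+Td_c$ holds exactly (this is \eqref{S} and Lemma \ref{homotopy manifold lemma}).

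For that definition of $S$ to have the claimed mapping property $W^{j-1,p}\to W^{j,q}$, one must check that $T_j[\chi_j,d_c]$ gains one derivative. Here your bookkeeping is off in the middle degree: you assert throughout that $T_j$ gains one derivative, but then in degree where $d_c$ has order two the commutator $[\chi_j,d_c]$ is a \emph{first-order} operator (Lemma \ref{leibniz}), and a one-derivative gain composed with a first-order operator gains nothing. The essential point, stated explicitly in the paper, is that precisely in that degree $T_j$ is associated with a kernel of type $2$ and hence gains \emph{two} derivatives (Theorem \ref{smoothing}, ii)), so $T_j[\chi_j,d_c]$ still gains one. Without this observation the error term cannot be placed in $S$, and your perturbative alternative does not rescue the argument. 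Finally, note that the iteration $T_M=(\sum_{i<k}S^i)T$, $S_M=S^k$ (using $Sd_c=d_cS$) is only needed for the stronger smoothing $L^p\to W^{k,q}$ mentioned after the theorem, not for the statement itself.
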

Iterating $S$ yields an operator which is bounded from $L^{p}$ to $W^{k,q}$, and still acts trivially on cohomology. For instance, this allows to replace a closed form, up to adding a controlled exact form, with a much more regular differential form.

\subsection{Questions}
Keeping in mind the analogous inequalities in the scalar case, the following questions
naturally arise.
 \begin{itemize} 
\item[1.] Do balls is Heisenberg group satisfy strong $(p,q)$-Poincar\'e and $(p,q)$-Sobolev inequalities? In other words, do Poincar\'e and Sobolev inequalities hold without lack on domain?

\item[2.] Do interior $(p,q)$-Poincar\'e and $(p,q)$-Sobolev inequalities hold for limiting values, i.e. for $p=1$ or $q=\infty$?

\item[3.] How much of these results does extend to more general Carnot groups?
\end{itemize}

\section{Scheme of proof}

\subsection{Global homotopy operators}

The most efficient way to prove a Poincar\'e inequality is to find a homotopy between identity and 0 on the complex of differential forms, i.e. a linear operator $K$ that raises the degree by 1 and satisfies
\begin{eqnarray*}
1=dK+Kd.
\end{eqnarray*}
More generally, we shall deal with homotopies between identity and other operators $P$, i.e. of the form
\begin{eqnarray*}
1-P=dK+Kd.
\end{eqnarray*}

In Euclidean space, the Laplacian provides us with such a homotopy. Write $\Delta=d\delta+\delta d$. Denote by $\Delta^{-1}$ the operator of convolution with the fundamental solution of the Laplacian. Then $\Delta^{-1}$ commutes with $d$ and its adjoint $\delta$, hence $K_e=\delta \Delta^{-1}$ satisfies $1=dK_e+K_ed$ on globally defined $L^p$ differential forms. Furthermore, $K_e$ is bounded $L^p\to W^{1,q}$ provided $\frac{1}{p}-\frac{1}{q}=\frac{1}{n}$. This proves the strong $(p,q)$-Poincar\'e inequality for Euclidean space. Rumin defines a Laplacian $\Delta_c$ by $\Delta_c=d_c\delta_c+\delta_c d_c$ when both $d_c$'s are first order, and by $\Delta_c=(d_c\delta_c)^2+\delta_c d_c$ or $\Delta_c=d_c\delta_c+(\delta_c d_c)^2$ near middle dimension, when one of them has order 2. This leads to a homotopy of the form $K_0=\delta_c \Delta_c^{-1}$ or $K_0=\delta_c d_c\delta_c \Delta_c^{-1}$ depending on degree. Again, $K_0$ is bounded $L^p\to W^{1,q}$ under assumption $E(h,p,q,n)$. This proves the strong contact $(p,q)$-Poincar\'e$(h)$ inequality for Heisenberg group, Theorem \ref{strongglobal}.

\subsection{Local homotopy operators}

We pass to local results. In Euclidean space, Poincar\'e's Lemma asserts that every closed form on a ball is exact. We need a quantitative version of this statement. The standard proof of Poincar\'e's Lemma relies on a homotopy operator which depends on the choice of an origin. Averaging over origins yields a bounded operator $K:L^p\to L^q$, as was observed by Iwaniec and Lutoborski, \cite{IL}. This proves the strong Euclidean $(p,q)$-Poincar\'e$(h)$ inequality for convex Euclidean domains. A support preserving variant $J:L^p\to L^q$ appears in Mitrea-Mitrea-Monniaux, \cite{mitrea_mitrea_monniaux} and this proves the strong Euclidean $(p,q)$-Sobolev inequality for bounded convex Euclidean domains. Incidentally, since, for balls, constants do not depend on the radius of the ball, this reproves the strong Euclidean $(p,q)$-Sobolev inequality for Euclidean space.

In this paper a sub-Riemannian counterpart is obtained using the homotopy of de Rham's and Rumin's complexes. Since this homotopy is a differential operator, a preliminary smoothing operation is needed. This is obtained by localizing (multiplying the kernel with cut-offs) the global homotopy $K_0$ provided by the inverse of Rumin's (modified) Laplacian.

Hence the proof goes as follows (see Section \ref{poincare}):
\begin{enumerate}
  \item Show that the inverse $K_0$ of Rumin's modified Laplacian on all of $\he n$ is given by a homogeneous kernel $k_0$. Deduce bounds $L^p\to W^{1,q}$. Conclude that $K_0$ is an exact homotopy for globally defined $L^p$ forms. 
    \item Split $k_0=k_1+k_2$ where $k_1$ has small support and $k_2$ is smooth. Hence $T=K_1$ is a homotopy on balls (with a loss on domain) of identity to $S=d_c K_2+K_2d_c$ which is smoothing. This provides the required local smoothing operation.
    \item Compose Iwaniec and Lutoborski's averaged Poincar\'e homotopy for the de Rham complex and Rumin's homotopy, and apply the result to smoothed forms. This proves an interior Poincar\'e inequality in Heisenberg group. Replacing Iwaniec and Lutoborski's homotopy with Mitrea-Mitrea-Monniaux's homotopy leads to an interior Sobolev inequality.
\end{enumerate}

\subsection{Global smoothing}

Let $(M,H,g)$ be a bounded $C^k$-geometry sub-Riemannian contact manifold. Pick a uniform covering by equal radius balls. Let $\chi_j$ be a partition of unity subordinate to this covering. Let $\phi_j$ be the corresponding charts from the unit Heisenberg ball. Let $S_j$ and $T_j$ denote the smoothing and homotopy operators transported by $\phi_j$. Set
\begin{eqnarray*}
T=\sum_j T_j \chi_j ,\quad S=\sum_j S_j \chi_j +T_j[\chi_j,d_c].
\end{eqnarray*}
When $d_c$ is first order, the commutator $[\chi_j,d_c]$ is an order 0 differential operator, hence $T_j[\chi_j,d_c]$ gains 1 derivative. When $d_c$ is second order, $[\chi_j,d_c]$ is a first order differential operator. It turns out that precisely in this case, $T_j$ gains 2 derivatives, hence $T_j[\chi_j,d_c]$ gains 1 derivative in this case as well.

This is detailed in section \ref{final}.

\section{Heisenberg groups and Rumin's complex $(E_0^\bullet,d_c)$}
\label{Rumin}

\subsection{Differential forms on Heisenberg group}

 We denote by  $\he n$  the $n$-dimensional Heisenberg
group, identified with $\rn {2n+1}$ through exponential
coordinates. A point $p\in \he n$ is denoted by
$p=(x,y,t)$, with both $x,y\in\rn{n}$
and $t\in\R$.
   If $p$ and
$p'\in \he n$,   the group operation is defined by
\begin{equation*}
p\cdot p'=(x+x', y+y', t+t' + \frac12 \sum_{j=1}^n(x_j y_{j}'- y_{j} x_{j}')).
\end{equation*}
The unit element of $\he n$ is the origin, that will be denote by $e$.

For a general review on Heisenberg groups and their properties, we
refer to \cite{Stein}, \cite{GromovCC} and to \cite{VarSalCou}.
We limit ourselves to fix some notations, following \cite{FSSC_advances}.

The Heisenberg group $\he n$ can be endowed with the homogeneous
norm (Kor\'anyi norm)
\begin{equation}\label{gauge}
\varrho (p)=\big(|p'|^4+p_{2n+1}^2\big)^{1/4},
\end{equation}
and we define the gauge distance (a true distance, see
 \cite{Stein}, p.\,638, that is equivalent to
Carnot--Carath\'eodory distance)
as
\begin{equation}\label{def_distance}
d(p,q):=\varrho ({p^{-1}\cdot q}).
\end{equation}
Finally, set $B_{\rho}(p,r)=\{q \in  \he n; \; d(p,q)< r\}$.

A straightforward computation shows that there exists $c_0>1$ such that
\begin{equation}\label{c0}
c_0^{-2} |p| \le \rho(p) \le |p|^{1/2},
\end{equation}
provided $p$ is close to $e$.
In particular, for $r>0$ small, if we denote by $B_{\mathrm{Euc}}(e,r)$ the Euclidean ball centred ad $e$ of radius $r$,
\begin{equation}\label{balls inclusion}
B_{\mathrm{Euc}}(e,r^2) \subset B_{\rho}(e,r) \subset B_{\mathrm{Euc}}(e, c_0^2 r).
\end{equation}

It is well known that the topological dimension of $\he n$ is $2n+1$,
since as a smooth manifold it coincides with $\R^{2n+1}$, whereas
the Hausdorff dimension of $(\he n,d)$ is $Q:=2n+2$.

    We denote by  $\mfrak h$
 the Lie algebra of the left
invariant vector fields of $\he n$. The standard basis of $\mfrak
h$ is given, for $i=1,\dots,n$,  by
\begin{equation*}
X_i := \partial_{x_i}-\frac12 y_i \partial_{t},\quad Y_i :=
\partial_{y_i}+\frac12 x_i \partial_{t},\quad T :=
\partial_{t}.
\end{equation*}
The only non-trivial commutation  relations are $
[X_{j},Y_{j}] = T $, for $j=1,\dots,n.$ 
The {\it horizontal subspace}  $\mfrak h_1$ is the subspace of
$\mfrak h$ spanned by $X_1,\dots,X_n$ and $Y_1,\dots,Y_n$.
Coherently, from now on, we refer to $X_1,\dots,X_n,Y_1,\dots,Y_n$
(identified with first order differential operators) as to
the {\it horizontal derivatives}. Denoting  by $\mfrak h_2$ the linear span of $T$, the $2$-step
stratification of $\mfrak h$ is expressed by
\begin{equation*}
\mfrak h=\mfrak h_1\oplus \mfrak h_2.
\end{equation*}

\bigskip

The stratification of the Lie algebra $\mfrak h$ induces a family of non-isotropic dilations
$\delta_\lambda$, $\lambda>0$ in $\he n$. The homogeneous dimension  of $\he n$
with respect to $\delta_\lambda$, $\lambda>0$ equals $Q$.

The vector space $ \mfrak h$  can be
endowed with an inner product, indicated by
$\scalp{\cdot}{\cdot}{} $,  making
    $X_1,\dots,X_n$,  $Y_1,\dots,Y_n$ and $ T$ orthonormal.
    
Throughout this paper, we write also
\begin{equation}\label{campi W}
W_i:=X_i, \quad W_{i+n}:= Y_i, \quad W_{2n+1}:= T, \quad \text
{for }i =1, \cdots, n.
\end{equation}

The dual space of $\mfrak h$ is denoted by $\covH 1$.  The  basis of
$\covH 1$,  dual to  the basis $\{X_1,\dots , Y_n,T\}$,  is the family of
covectors $\{dx_1,\dots, dx_{n},dy_1,\dots, dy_n,\theta\}$ where 
$$ \theta
:= dt - \frac12 \sum_{j=1}^n (x_jdy_j-y_jdx_j)$$ is called the {\it contact
form} in $\he n$. 

We indicate as $\scalp{\cdot}{\cdot}{} $ also the
inner product in $\covH 1$  that makes $(dx_1,\dots, dy_{n},\theta  )$ 
an orthonormal basis.

Coherently with the previous notation \eqref{campi W},
we set
\begin{equation*}
\omega_i:=dx_i, \quad \omega_{i+n}:= dy_i, \quad \omega_{2n+1}:= \theta, \quad \text
{for }i =1, \cdots, n.
\end{equation*}

We put
$       \vetH 0 := \covH 0 =\R $
and, for $1\leq k \leq 2n+1$,
\begin{equation*}
\begin{split}
         \covH k& :=\mathrm {span}\{ \omega_{i_1}\wedge\dots \wedge \omega_{i_k}:
1\leq i_1< \dots< i_k\leq 2n+1\}
.
\end{split}
\end{equation*}

The volume $(2n+1)$-form $ \theta_1\wedge\cdots\wedge \theta_{ 2n+1}$
 will be also
written as $dV$.

%

The same construction can be performed starting from the vector
subspace $\mfrak h_1\subset \mfrak h$,
obtaining the {\it horizontal $k$-covectors} 
\begin{equation*}
\begin{split}
         \covh k& :=\mathrm {span}\{ \omega_{i_1}\wedge\dots \wedge \omega_{i_k}:
1\leq i_1< \dots< i_k\leq 2n\}.
\end{split}
\end{equation*}
%

%
%

%
%
%
%
%
%
%

\begin{definition}\label{weight} If $\eta\neq 0$, $\eta\in \covh 1$,  
 we say that $\eta$ has \emph{weight $1$}, and we write
$w(\eta)=1$. If $\eta = \theta$, we say $w(\eta)= 2$.
More generally, if
$\eta\in \covH h$, we say that $\eta$ has \emph {pure weight} $k$ if $\eta$ is
a linear combination of covectors $\omega_{i_1}\wedge\cdots\wedge\omega_{i_h}$
with $w(\omega_{i_1})+\cdots + w(\omega_{ i_h})=k$.

Notice that, if $\eta,\zeta \in \covH h$ and $w(\eta)\neq w(\zeta)$, then
$\scal{\eta}{\zeta}=0$. 
\end{definition}

\subsection{Rumin's complex on Heisenberg groups}

The exterior differential $d$ does not preserve weights. It splits into
\begin{eqnarray*}
d=d_0+d_1+d_2
\end{eqnarray*}
where $d_0$ preserves weight, $d_1$ increases weight by 1 unit and $d_2$ increases weight by 2 units. $d_0$ is a differential operator of order 0; in degree $k$, it vanishes on forms of weight $k$ and if $\beta$ is a $k-1$-form of weight $k-1$, $d_0(\theta\wedge\beta)=d\theta\wedge\beta$. A first attempt in trying to invert $d$ is to invert $d_0$. For this, let us pick a complement $\mathcal{W}$ to $\mathrm{ker}(d_0)$ in $\covH\bullet$ and a complement $\mathcal{V}$ to $\mathrm{Im}(d_0)$ in $\covH\bullet$ containing $\mathcal{W}$. This allows to define $d_0^{-1}$ to be 0 on $\mathcal{V}$ and the inverse of $d_0:\mathcal{W}\to \mathrm{Im}(d_0)$. This defines a left-invariant order 0 operator on smooth forms on $\he n$. Denote by $V$ (resp. $W$) the space of smooth sections of $\mathcal{V}$ (resp. $\mathcal{W}$).

Rumin shows that
\begin{eqnarray*}
r=1-d_0^{-1}d-dd_{0}^{-1}
\end{eqnarray*}
is the projector onto the subspace
\begin{eqnarray*}
E=V\cap d^{-1}V
\end{eqnarray*}
along the subspace
\begin{eqnarray*}
F=W+dW.
\end{eqnarray*}
Hence, in the sequel, it will be denoted by $\Pi_{E}$. The weight-preserving part of $r$,
\begin{eqnarray*}
r_0=1-d_0^{-1}d_0-d_0d_{0}^{-1},
\end{eqnarray*}
has order 0, it is the projector onto $\mathcal{E}_0:=\mathcal{V}\cap\mathrm{ker}(d_0)$ along $\mathcal{W}\oplus\mathrm{Im}(d_0)$. Hence, in the sequel, it will be denoted by $\Pi_{E_0}$, where $E_0$ is the space of smooth sections of $\mathcal{E}_0$. ${\Pi_{E_0}}_{|E}$ and ${\Pi_{E}}_{|E_0}$ are inverses of each other. We use them to conjugate $d_{|E}$ to an operator
\begin{eqnarray*}
d_c=\Pi_{E_0} d\Pi_{E}\Pi_{E_0}
\end{eqnarray*}
on $E_0$. By construction, the complex $(E_0,d_c)$ is isomorphic to $(E,d)$, which is homotopic to the full de Rham complex.

\subsection{Contact manifolds}

We now sketch Rumin's construction of the intrinsic complex for general contact manifolds $(M,H)$. Locally, $H$ is the kernel of a smooth contact 1-form $\theta$. Let $L:\bigwedge^\bullet H^*\to\bigwedge^\bullet H^*$ denote multiplication by $d\theta_{|H}$. 

It is well known that, for every $h\leq n-1$, $L^{n-h}:\bigwedge^h H^*\to\bigwedge^{2n-h} H^*$ is an isomorphism. It follows that $\mathrm{ker}(L^{n-h+1})$ is a complement of $\mathrm{Im}(L)$ in $\bigwedge^h H^*$, if $h\leq n$, and that $\mathrm{Im}(L)=\bigwedge^h H^*$ if $h\geq n+1$. Therefore we set
\begin{eqnarray*}
\mathcal{V}^h=\begin{cases}
\{\alpha\in T^*M\,;\,L^{n-h+1}(\alpha_{|H})=0\} & \text{if }h\leq n, \\
\{\alpha\in T^*M\,;\,\alpha_{|H}=0\}& \text{otherwise}.
\end{cases}
\end{eqnarray*}
Similarly, $\mathrm{Im}(L^{h-n+1})$ is a complement of $\mathrm{ker}(L)$ in $\bigwedge^h H^*$ if $h\geq n$, and $\mathrm{ker}(L)=\{0\}$ in $\bigwedge^h H^*$ if $h\leq n-1$. Therefore we set
\begin{eqnarray*}
\mathcal{W}^h=\begin{cases}
\{\alpha\in T^*M\,;\,\alpha_{|H}=0\} & \text{if }h\leq n-1, \\
\{\alpha\in T^*M\,;\,\alpha\in\theta\wedge \mathrm{Im}(L^{h-n+1})\}& \text{otherwise}.
\end{cases}
\end{eqnarray*}
Changing $\theta$ to an other smooth 1-form $\theta'=f\theta$ with kernel $H$ does not change $\mathcal{V}$ and $\mathcal{W}$. With these choices, spaces of smooth sections $V$ and $W$ depend only on the plane field $H$. We can define subspaces of smooth differential forms $E=V\cap d^{-1}V$ and $F=W+dW$ and the projector $\Pi_E$. Since no extra choices are involved, $E$, $F$ and $\Pi_E$ are invariant under contactomorphisms.

In degrees $h\geq n+1$, $\mathcal{E}_0=\theta\wedge(\bigwedge^h H^* \cap \mathrm{ker}(L))$ is a contact invariant. Since 
\begin{eqnarray*}
(\Pi_{E_0})_{|E}=((\Pi_E)_{|E_0})^{-1},
\end{eqnarray*} 
the operator $d_c=((\Pi_E)_{|E_0})^{-1}\circ d\circ (\Pi_E)_{|E_0}$ is a contact invariant.

In degrees $h\leq n$, the restriction of differential forms to $H$ is an isomorphism of $\mathcal{E}_0$ to $\mathcal{E}'_0:=\bigwedge^h H^* \cap \mathrm{ker}(L^{n-h+1})$. We note that for a differential form $\omega$ such that $\omega_{|H}\in\mathcal{E}'_0$, $\Pi_E(\omega)$ only depends on $\omega_{|H}$. Indeed, $d_0^{-1}\omega=0$. Furthermore, if $\omega=\theta\wedge\beta$, $d_0^{-1}d\omega=d_0^{-1}(d\theta\wedge\beta)=\omega$, hence $\Pi_E(\omega)=\omega-dd_0^{-1}\omega=0$. It follows that $(\Pi_E)_{|E_0}$ can be viewed as defined on the space $E'_0$ of sections of $\mathcal{E}'_0$, which is a contact invariant. Since 
\begin{eqnarray*}
(\Pi_{E_0})_{|E}=((\Pi_E)_{|E_0})^{-1},\quad \textrm{it follows that }(\Pi_{E'_0})_{|E}=((\Pi_E)_{|E'_0})^{-1}
\end{eqnarray*} 
and $d_c$ viewed as an operator on $E'_0$,
\begin{eqnarray*}
((\Pi_E)_{|E'_0})^{-1}\circ d\circ (\Pi_E)_{|E'_0}
\end{eqnarray*}
is a contact invariant. In the sequel, we shall ignore the distinction between $E_0$ and $E'_0$. The connection with the description provided in the introduction is easy.

Alternate contact invariant descriptions of Rumin's complex can be found in \cite{Bernig_2017} and \cite{BEGN}.

By construction,
\begin{itemize}
\item[i)] $d_c^2=0$;
\item[ii)] the complex $\mc E_0:=(E_0^\bullet,d_c)$ is homotopically equivalent to the de Rham complex
$\Omega:= (\Omega^\bullet,d)$. Thus,  if $D\subset \he n$ is an open set, unambiguously we write $H^h(D)$
for the $h$-th cohomology group;
\item[iii)] $d_c: E_0^h\to E_0^{h+1}$ is a homogeneous differential operator in the 
horizontal derivatives
of order 1 if $h\neq n$, whereas $d_c: E_0^n\to E_0^{n+1}$ is an homogeneous differential operator in the 
horizontal derivatives
of order 2.
\end{itemize}

Since the exterior differential $d_c$ on $E_0^h$ can be written in coordinates as a left-invariant homogeneous differential operator in the horizontal variables of order 1 if $h\neq n$ and of order
2 if $h=n$, the proof of the following Leibniz' formula is easy.

\begin{lemma}\label{leibniz} If $\zeta$ is a smooth real function, then
\begin{itemize}
\item if $h\neq n$, then on $E_0^h$ we have:
$$
[d_c,\zeta] = P_0^h(W\zeta),
$$
where $P_0^h(W\zeta): E_0^h \to E_0^{h+1}$ is a homogeneous differential operator of degree zero with coefficients depending
only on the horizontal derivatives of $\zeta$;
\item if $h= n$, then on $E_0^n$ we have
$$
[d_c,\zeta] = P_1^n(W\zeta) + P_0^n(W^2\zeta) ,
$$
where $P_1^n(W\zeta):E_0^n \to E_0^{n+1}$ is a homogeneous differential operator of degree 1 with coefficients depending
only on the horizontal derivatives of $\zeta$, and where $P_0^h(W^2\zeta): E_0^n \to E_0^{n+1}$ is a homogeneous differential operator in
the horizontal derivatives of degree 0
 with coefficients depending
only on second order horizontal derivatives of $\zeta$.
\end{itemize}

\end{lemma}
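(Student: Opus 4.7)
The approach is to start from item (iii) recorded just above the statement: on $E_0^h$ the operator $d_c$ is a left-invariant homogeneous differential operator in the horizontal derivatives $W_1,\ldots,W_{2n}$, of order $1$ when $h\neq n$ and of order $2$ when $h=n$. After fixing a smooth frame of $\mathcal{E}_0^h$ and one of $\mathcal{E}_0^{h+1}$, I can write $d_c=\sum_{|\alpha|\leq k}A_\alpha W^\alpha$ with $k\in\{1,2\}$ and constant matrix coefficients $A_\alpha$ acting component-wise. Multiplication by the scalar function $\zeta$ preserves $E_0^\bullet$, since $E_0^\bullet$ is a $C^\infty(\he n)$-module (it is the space of smooth sections of the sub-bundle $\mathcal{E}_0^\bullet$). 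Consequently $[d_c,\zeta]$ a priori lands in $E_0^{h+1}$ and its computation reduces to the scalar Leibniz rule $[W_j,\zeta]=W_j\zeta$ applied component-wise.

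When $h\neq n$ we have $d_c=\sum_{j=1}^{2n}A_j W_j$, so
\begin{eqnarray*}
[d_c,\zeta]=\sum_{j=1}^{2n}(W_j\zeta)\,A_j=:P_0^h(W\zeta),
\end{eqnarray*}
a zero-order operator with coefficients linear in $W\zeta$. When $h=n$ we have $d_c=\sum_{i,j}A_{ij}W_iW_j+\sum_j A_j W_j$, and the identity
\begin{eqnarray*}
[W_iW_j,\zeta]=(W_iW_j\zeta)+(W_i\zeta)W_j+(W_j\zeta)W_i
\end{eqnarray*}
splits $[d_c,\zeta]$ into a first-order piece
\begin{eqnarray*}
P_1^n(W\zeta):=\sum_{i,j}A_{ij}\bigl((W_i\zeta)W_j+(W_j\zeta)W_i\bigr)+\sum_j(W_j\zeta)\,A_j,
\end{eqnarray*}
whose coefficients depend only on first-order horizontal derivatives of $\zeta$, and a zero-order piece
\begin{eqnarray*}
P_0^n(W^2\zeta):=\sum_{i,j}(W_iW_j\zeta)\,A_{ij},
\end{eqnarray*}
whose coefficients depend only on second-order horizontal derivatives of $\zeta$. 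This is precisely the asserted decomposition.

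There is no real obstacle: the entire argument is a purely algebraic consequence of the two scalar Leibniz identities above, once $d_c$ is written in coordinates. The only subtlety worth stressing is that the commutator must be interpreted as acting $E_0^h\to E_0^{h+1}$ rather than on the ambient $\Omega^h$, which is legitimate because multiplication by the scalar $\zeta$ preserves each sub-bundle $\mathcal{E}_0^h$ and $d_c$ maps $E_0^h$ into $E_0^{h+1}$ by construction; no projection $\Pi_E$ or $\Pi_{E_0}$ has to be differentiated.
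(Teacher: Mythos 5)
Your proof is correct and is precisely the computation the paper has in mind: the paper omits the proof entirely, remarking only that it ``is easy'' once $d_c$ is written in coordinates as a left-invariant homogeneous operator in the horizontal derivatives, and your component-wise application of the scalar Leibniz identities is exactly that argument. The only cosmetic point is that, since $d_c$ on $E_0^n$ is \emph{homogeneous} of degree $2$ in the horizontal derivatives, the lower-order term $\sum_j A_jW_j$ you allow for is actually absent (which is just as well, since its contribution $\sum_j(W_j\zeta)A_j$ has degree $0$ and would otherwise spoil the claimed homogeneity of $P_1^n(W\zeta)$).
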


\section{Kernels}\label{kernels}

  If $f$ is a real function defined in $\he n$, we denote
    by $\ccheck f$ the function defined by $\ccheck f(p):=
    f(p^{-1})$, and, if $T\in\mc D'(\he n)$, then $\ccheck T$
    is the distribution defined by $\Scal{\ccheck T}{\phi}
    :=\Scal{T}{\ccheck\phi}$ for any test function $\phi$.
    
    Following e.g. \cite{folland_stein}, we can define a group
convolution in $\he n$: if, for instance, $f\in\mc D(\he n)$ and
$g\in L^1_{\mathrm{loc}}(\he n)$, we set
\begin{equation}\label{group convolution}
f\ast g(p):=\int f(q)g(q^{-1}\cdot p)\,dq\quad\mbox{for $q\in \he n$}.
\end{equation}
We remind that, if (say) $g$ is a smooth function and $P$
is a left invariant differential operator, then
$$
P(f\ast g)= f\ast Pg.
$$
We remind also that the convolution is again well defined
when $f,g\in\mc D'(\he n)$, provided at least one of them
has compact support. In this case the following identities
hold
\begin{equation}\label{convolutions var}
\Scal{f\ast g}{\phi} = \Scal{g}{\ccheck f\ast\phi}
\quad
\mbox{and}
\quad
\Scal{f\ast g}{\phi} = \Scal{f}{\phi\ast \ccheck g}
\end{equation}
 for any test function $\phi$.
 
 As in \cite{folland_stein},
we also adopt the following multi-index notation for higher-order derivatives. If $I =
(i_1,\dots,i_{2n+1})$ is a multi--index, we set  
$W^I=W_1^{i_1}\cdots
W_{2n}^{i_{2n}}\;T^{i_{2n+1}}$. 
By the Poincar\'e--Birkhoff--Witt theorem, the differential operators $W^I$ form a basis for the algebra of left invariant
differential operators in $\he n$. 
Furthermore, we set 
$|I|:=i_1+\cdots +i_{2n}+i_{2n+1}$ the order of the differential operator
$W^I$, and   $d(I):=i_1+\cdots +i_{2n}+2i_{2n+1}$ its degree of homogeneity
with respect to group dilations.

 Suppose now $f\in\mc E'(\he n)$ and $g\in\mc D'(\he n)$. Then,
 if $\psi\in\mathcal D(\he n)$, we have
 \begin{equation}\label{convolution by parts}
 \begin{split}
\Scal{(W^If)\ast g}{\psi}&=
 \Scal{W^If}{\psi\ast \ccheck g} =
  (-1)^{|I|}  \Scal{f}{\psi\ast (W^I \,\ccheck g)} \\
&=
 (-1)^{|I|} \Scal{f\ast \ccheck W^I\,\ccheck g}{\psi}.
\end{split}
\end{equation}

\medskip

Following \cite{folland}, we remind now the notion of {\it kernel of type $\mu$}.

\begin{definition}\label{type} A kernel of type $\mu$ is a 
homogeneous distribution of degree $\mu-Q$
(with respect to group dilations $\delta_r$),
that is smooth outside of the origin.

The convolution operator with a kernel of type $\mu$
is still called an operator of type $\mu$.
\end{definition}

\begin{proposition}\label{kernel}
Let $K\in\mc D'(\he n)$ be a kernel of type $\mu$.
\begin{itemize}
\item[i)] $\ccheck K$ is again a kernel of type $\mu$;
\item[ii)] $WK$ and $KW $ are associated with  kernels of type $\mu-1$ for
any horizontal derivative $W$;
\item[iii)]  If $\mu>0$, then $K\in L^1_{\mathrm{loc}}(\he n)$.
\end{itemize}
\end{proposition}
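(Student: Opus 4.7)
The three claims rest on how the group inversion, horizontal vector fields, and dilations interact with the notion of homogeneous distribution of degree $\mu-Q$.

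\textbf{Part (i).} In exponential coordinates on $\he n$, inversion is $p\mapsto p^{-1}=-p$, so it is a smooth linear involution and commutes with the dilations $\delta_r$, i.e. $\delta_r(p^{-1})=(\delta_r p)^{-1}$. Consequently, for $p\neq e$,
\begin{eqnarray*}
\ccheck K(\delta_r p)=K((\delta_r p)^{-1})=K(\delta_r(p^{-1}))=r^{\mu-Q}K(p^{-1})=r^{\mu-Q}\ccheck K(p),
\end{eqnarray*}
and smoothness away from $e$ is preserved. Homogeneity is understood in the distributional sense on all of $\he n$, so I would state it by pairing with $\psi\circ\delta_r$ via the identity $\Scal{\ccheck K}{\psi}=\Scal{K}{\ccheck\psi}$.

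\textbf{Part (ii).} For the left action $K\mapsto WK$: a left-invariant horizontal field $W$ is $\delta_r$-homogeneous of degree $-1$, so if $K$ is homogeneous of degree $\mu-Q$, then $WK$ is homogeneous of degree $(\mu-1)-Q$ and still smooth off $e$. Thus $WK$ is a kernel of type $\mu-1$. For the operator $T_K\circ W$, where $T_K f=f\ast K$, I would use \eqref{convolution by parts}: for test $\psi$,
\begin{eqnarray*}
\Scal{(Wf)\ast K}{\psi}=-\Scal{f}{\psi\ast W\ccheck K}=-\Scal{f\ast \ccheck{\,W\ccheck K}}{\psi},
\end{eqnarray*}
so $T_KW$ is convolution with the kernel $-\ccheck{\,W\ccheck K}$. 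By (i), $\ccheck K$ is of type $\mu$; by the first half of (ii), $W\ccheck K$ is of type $\mu-1$; by (i) again, so is its inversion. This is the part that requires the most care, because one has to justify the integration-by-parts formula even though $K$ is only a distribution (not $L^1_{\mathrm{loc}}$ in general); the key is that one factor in the convolution, namely $f$ or $\psi$, is compactly supported, which is exactly the setting of \eqref{convolutions var}--\eqref{convolution by parts}.

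\textbf{Part (iii).} If $\mu>0$, then on $\he n\setminus\{e\}$ the function $K$ is smooth and homogeneous of degree $\mu-Q>-Q$, so $|K(p)|\leq C\rho(p)^{\mu-Q}$ for $p$ near $e$. Using polar coordinates adapted to the dilations, with $dp=r^{Q-1}\,dr\,d\sigma$ on $\{\rho=r\}$,
\begin{eqnarray*}
\int_{B_\rho(e,R)}|K(p)|\,dp\leq C\int_0^R r^{\mu-Q}\cdot r^{Q-1}\,dr=C\int_0^R r^{\mu-1}\,dr<\infty,
\end{eqnarray*}
so $K$ is locally integrable near $e$, and trivially so away from $e$ by smoothness.

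\textbf{Main obstacle.} The only nontrivial point is part (ii) for $T_KW$: one has to transfer the horizontal derivative from the function to the kernel using \eqref{convolution by parts} while keeping track of the inversion, and then invoke (i) to re-convert the right-invariant object $\ccheck{\,W\ccheck K}$ into a bona fide kernel of type $\mu-1$. Everything else is a direct homogeneity/smoothness check.
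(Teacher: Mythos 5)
The paper itself states this proposition without proof (it is imported from Folland), so there is no in-text argument to compare against; judged on its own, your proof is the standard one and is essentially correct. Parts (i) and the first half of (ii) are exactly the homogeneity-plus-smoothness checks one needs (using that inversion is $p\mapsto -p$ in exponential coordinates and commutes with $\delta_r$, and that $W$ is $\delta_r$-homogeneous of degree $-1$), and your identification of the kernel of $f\mapsto (Wf)\ast K$ as $-\ccheck{(W\ccheck K)}$ via \eqref{convolution by parts}, then cycling through (i) and the first half of (ii), is the right way to handle $KW$. The one step you should complete is in (iii): proving that the smooth function to which $K$ restricts off the origin is integrable near $e$ does not yet show that the \emph{distribution} $K$ is given by integration against that function. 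Let $\tilde K$ denote the distribution defined by the locally integrable extension (which is again homogeneous of degree $\mu-Q$). Then $K-\tilde K$ is supported at the origin, hence is a finite linear combination of derivatives $W^I$ of the Dirac mass at $e$, each homogeneous of degree $-Q-d(I)\le -Q$; since $K-\tilde K$ is homogeneous of degree $\mu-Q>-Q$, it must vanish, whence $K=\tilde K\in L^1_{\mathrm{loc}}(\he n)$. This is a one-line addendum, not a change of strategy.
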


\begin{theorem} \label{hls folland} Suppose $0<\alpha<Q$, and let
$K$ be a kernel of type $\alpha$. Then
\begin{itemize}
\item[i)] if $1<p<Q/\alpha$, and $1/q:= 1/p-\alpha/Q$, then
$$
\| u\ast K\|_{L^q(\he n)} \le C \| u\|_{L^p(\he n)}
$$
for all $u\in L^p(\he n)$.
\item[ii)] If $p\ge Q/\alpha$ and $B, B' \subset \he n$ are fixed balls, then
for any $q\ge p$
$$
\| u\ast K\|_{L^q(B')} \le C \| u\|_{L^p(\he n)}
$$
for all $u\in L^p(\he n)$ with $\supp u\subset B$.
\item[iii)] If $K$ is a kernel of type 0 and $1<p<\infty$, then
 $$
\| u\ast K\|_{L^p(\he n)} \le C \| u\|_{L^p(\he n)}.
$$
\end{itemize}

\end{theorem}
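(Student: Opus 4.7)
The plan is to treat the three parts separately, adapting the classical Hardy--Littlewood--Sobolev and Calder\'on--Zygmund arguments to the homogeneous nilpotent setting $(\he n, d, dp)$, which is a doubling quasi-metric measure space. Throughout I exploit the pointwise bound $|K(p)|\le C\rho(p)^{\alpha-Q}$ coming from $\delta_r$-homogeneity and smoothness off the origin. For \textbf{(i)}, I follow Hedberg and split, for each $R>0$,
\begin{eqnarray*}
(u\ast K)(p) = \int_{\rho(q)<R} u(pq^{-1})K(q)\,dq + \int_{\rho(q)\ge R} u(pq^{-1})K(q)\,dq.
\end{eqnarray*}
A dyadic decomposition of the Kor\'anyi ball combined with the doubling property bounds the inner piece by $CR^\alpha\,Mu(p)$, where $M$ is the Hardy--Littlewood maximal operator, while H\"older's inequality bounds the outer piece by $C\|u\|_p R^{-Q/q}$. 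Optimizing in $R$ yields the Hedberg estimate $|u\ast K(p)|\le C(Mu(p))^{p/q}\|u\|_p^{1-p/q}$, and $L^p$-boundedness of $M$ on the doubling space $\he n$ finishes.

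For \textbf{(ii)}, since $\supp u\subset B$ and the output is evaluated on $B'$, the argument $q^{-1}p$ of $K$ remains in the bounded set $C_0:=B^{-1}\cdot B'$. Hence $|u\ast K|\le |u|\ast(|K|\chi_{C_0})$ on $B'$, and the pointwise bound on $K$ places $|K|\chi_{C_0}$ in $L^r(\he n)$ for every $1\le r<Q/(Q-\alpha)$. Given $q\ge p\ge Q/\alpha$, I pick $r$ from Young's relation $1/r=1+1/q-1/p$; the constraint $1/r>1-\alpha/Q$ then reduces to $1/q>1/p-\alpha/Q$, which holds since the right-hand side is nonpositive. Young's convolution inequality concludes the argument.

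For \textbf{(iii)}, this is the Calder\'on--Zygmund theorem for homogeneous singular integrals on $\he n$, whose proof proceeds in three steps. (a) $L^2$-boundedness comes from Fourier analysis on the graded group: by Plancherel, convolution with $K$ corresponds to a bounded operator-valued multiplier, using the cancellation implicit in the principal-value interpretation of a type~$0$ kernel (Knapp--Stein, cf. \cite{folland_stein}). (b) The H\"ormander regularity condition
\begin{eqnarray*}
\int_{\rho(q)>c\rho(q_0)}|K(q_0^{-1}\cdot q)-K(q)|\,dq\le C
\end{eqnarray*}
follows from the mean-value inequality along left-invariant horizontal directions, using that by Proposition \ref{kernel} each horizontal derivative $W_jK$ is a kernel of type $-1$, hence bounded by $C\rho^{-Q-1}$. (c) The Coifman--Weiss Calder\'on--Zygmund decomposition on the doubling space $(\he n, d, dp)$ then yields the weak $(1,1)$ bound, and interpolation with $L^2$ combined with duality covers all $1<p<\infty$. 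The delicate step is (a): a homogeneous distribution of degree $-Q$ smooth off $e$ is defined only modulo a multiple of $\delta_e$, so convergence of the principal value and $L^2$-boundedness both hinge on the mean-zero condition $\int_{\rho(q)=1}K\,d\sigma=0$ implicit in the notion of a type~$0$ kernel.
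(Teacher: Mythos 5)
Your proposal is correct in substance but follows a genuinely different, more self-contained route than the paper. The paper disposes of i) and iii) by citing Folland (Propositions 1.11 and 1.9), and obtains ii) from i) by a purely soft argument: pick $\tilde p<Q/\alpha$ with $1/\tilde p\le 1/q+\alpha/Q$, set $1/\tilde q=1/\tilde p-\alpha/Q<1/q$, and chain $\|u\ast K\|_{L^q(B')}\le C\|u\ast K\|_{L^{\tilde q}(B')}\le C\|u\|_{L^{\tilde p}(\he n)}\le C\|u\|_{L^p(B)}$, using H\"older's inequality on the bounded sets $B'$ and $B$ at the two ends. You instead reprove i) via Hedberg's maximal-function splitting (a correct and standard proof of the cited Folland result; your exponent bookkeeping, including $\|K\chi_{\rho\ge R}\|_{p'}\sim R^{-Q/q}$ under $p<Q/\alpha$, checks out), prove ii) directly by Young's inequality on the unimodular group $\he n$ with the truncated kernel $|K|\chi_{C_0}\in L^r$, $r<Q/(Q-\alpha)$ (the verification that $1/r=1+1/q-1/p$ is admissible exactly when $1/q>1/p-\alpha/Q$, automatic for $p\ge Q/\alpha$, is the right computation), and sketch the Calder\'on--Zygmund machinery for iii). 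What your approach buys is independence from part i) in proving ii), and explicit constants/mechanisms throughout; what it costs is that step (a) of iii) --- the $L^2$ bound for principal-value convolution with a type-$0$ kernel on a homogeneous group --- is stated rather than proved, and is in fact the deepest ingredient (Knapp--Stein/Kor\'anyi--V\'agi); at that level of granularity you are still ultimately leaning on the same external result the paper cites, so this should be flagged as a citation rather than presented as a complete argument. You are right that the mean-value-zero condition on the unit Kor\'anyi sphere is implicit in the existence of a homogeneous degree-$(-Q)$ distributional extension, which is the correct way to read Definition \ref{type} when $\mu=0$.
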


\begin{proof}
For statements i) and iii), we refer to \cite{folland}, Propositions 1.11 and 1.9. As for ii),
if $p\ge Q/\alpha$, we choose $1<\tilde p<Q/\alpha$ such that $1/{\tilde p} \le 1/q + \alpha/Q$. If we
set $1/{\tilde q}:= 1/{\tilde p} -\alpha/Q<1/q$,   then
\begin{equation*}\begin{split}
\| u\ast & K\|_{L^{ q}(B')}   \le C_{B'}\| u\ast K\|_{L^{ \tilde q}(B')} \le C_{B'}
\| u\ast K\|_{L^{\tilde q}(\he n)} 
\\&
\le C'(B') \| u\|_{L^{\tilde p}(\he n)} \le C'(B,B') \| u\|_{L^{ p}(B)}.
\end{split}\end{equation*}
\end{proof}

\begin{lemma}\label{truncation} Suppose $0< \alpha<Q$.
If $K$ is a kernel of type $\alpha$
and $\psi \in \mc D(\he n)$, $\psi\equiv 1$ in a neighborhood of the origin, then
the statements i) and ii) of Proposition \ref{hls folland} still
hold if we replace $K$ by $(1-\psi )K$. 

Analogously, if $K$ is a kernel of type 0 and $\psi \in \mc D(\he n)$,
then statement iii) of Proposition \ref{hls folland} still
hold if we replace $K$ by $(\psi-1) K$.
\end{lemma}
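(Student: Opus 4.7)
My plan is to handle the two cases separately: parts (i)–(ii) are clean via pointwise domination by a Riesz-type kernel, while part (iii) is more delicate because kernels of type $0$ sit on the borderline of integrability and require the Calder\'on--Zygmund structure.

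For parts (i) and (ii), I observe that $K$ is homogeneous of degree $\alpha-Q$ and smooth away from $e$, hence $|K(p)|\leq C\rho(p)^{\alpha-Q}$ globally. With $\psi$ bounded this yields $|(1-\psi)(p)K(p)|\leq C'\rho(p)^{\alpha-Q}$ on all of $\he n\setminus\{e\}$. Consequently, for $u\in L^p(\he n)$,
\[
|u\ast(1-\psi)K|(p)\leq C'\,\bigl(|u|\ast\rho^{\alpha-Q}\bigr)(p)
\]
pointwise. Since $\rho^{\alpha-Q}$ is itself a kernel of type $\alpha$, statements i) and ii) of Theorem \ref{hls folland} applied to this dominating Riesz kernel transfer the desired global and local $L^p\to L^q$ bounds to $(1-\psi)K$.

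For part (iii), I write $(\psi-1)K=\psi K-K$; since convolution with $K$ is bounded on $L^p$ by Theorem \ref{hls folland} iii), it suffices to treat $\psi K$. The natural decomposition is
\[
\psi K=\psi(0)\,K+(\psi-\psi(0))K,
\]
whose first summand is a scalar multiple of $K$ and is already handled. For the second, smoothness of $\psi$ together with \eqref{c0} give the Lipschitz-type estimate $|\psi(p)-\psi(0)|\leq C\rho(p)$ near $e$, which improves the pointwise bound of the product to $\rho^{1-Q}$ in a neighborhood of the origin --- \emph{locally integrable}. Picking $\eta\in\mc D(\he n)$ equal to $1$ on $\supp\psi$ and on a neighborhood of $e$, I split $(\psi-\psi(0))K=\eta(\psi-\psi(0))K+(1-\eta)(\psi-\psi(0))K$. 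The first piece is compactly supported and, by the improved estimate, lies in $L^1(\he n)$, so Young's inequality gives an $L^p\to L^p$ bound; the second piece, using $\eta\equiv 1$ on $\supp\psi$, simplifies to $-\psi(0)(1-\eta)K=-\psi(0)(K-\eta K)$.

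The main obstacle is the residual term $\eta K$: a compactly supported distribution carrying the full CZ singularity $\rho^{-Q}$ at the origin, which no $L^1$-based argument can dominate. My approach is to treat it by Calder\'on--Zygmund theory on the space of homogeneous type $(\he n,d,dp)$: $\eta K$ satisfies the size estimate $\rho^{-Q}$ and the H\"ormander-type smoothness estimate (horizontal derivatives of $K$ gain one order of decay by Proposition \ref{kernel} ii)), while its $L^2$-boundedness is inherited from that of $T_K$ via the same $\rho^{1-Q}$-decomposition used above applied to $\eta$ in place of $\psi$. Standard CZ theory then delivers the $L^p\to L^p$ bound for $1<p<\infty$, and chaining the pieces completes the proof of part (iii).
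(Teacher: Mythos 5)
Your treatment of statements i) and ii) is correct and is essentially the paper's own argument: both rest on the pointwise bound $|(1-\psi)K|\le C\rho^{\alpha-Q}$, after which one either dominates $|u\ast(1-\psi)K|$ by $|u|\ast\rho^{\alpha-Q}$ and applies Theorem \ref{hls folland} to that Riesz kernel (as you do), or notes that $(1-\psi)K\in L^{Q/(Q-\alpha),\infty}(\he n)$ and reruns Folland's proof (as the paper does).

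For part iii) there is a genuine gap. Your chain of decompositions reduces $\psi K$, modulo a compactly supported $L^1$ kernel (that step is fine, including the use of \eqref{c0} to get $|\psi(p)-\psi(0)|\le C\rho(p)$ and hence local integrability of $(\psi-\psi(0))K$), to $\psi(0)\,\eta K$, where $\eta$ is again a cutoff equal to $1$ near the origin. But $\eta K=K-(1-\eta)K$, so bounding convolution with $\eta K$ on $L^p$ is, given Theorem \ref{hls folland} iii), \emph{exactly equivalent} to the statement being proved, with $\eta$ in place of $\psi$. This circularity bites precisely at the step you defer to ``the same decomposition applied to $\eta$ in place of $\psi$'': that move only produces yet another cutoff and an infinite regress, and never supplies the a priori $L^2$ bound that Calder\'on--Zygmund theory requires as input. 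Some genuinely new ingredient is unavoidable here: size and H\"ormander estimates alone cannot suffice, because for the \emph{positive} kernel $\rho^{-Q}\mathbf{1}_{\{\rho\ge 1\}}$ (same size, same decay, no cancellation) convolution is not bounded on $L^p$ --- testing against $\mathbf{1}_{B(e,R)}$ shows the operator norm grows like $\log R$. The missing ingredient is the cancellation encoded in the hypothesis that $K$ is a homogeneous \emph{distribution} of degree $-Q$: its restriction away from $e$ has vanishing mean on Kor\'anyi spheres, so the truncated integrals $\int_{\epsilon<\rho<N}\eta K$ are uniformly bounded, and the standard $L^2$ criterion for convolution-type Calder\'on--Zygmund kernels on homogeneous groups then applies; with that input your CZ framework does close. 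For comparison, the paper disposes of iii) in one line ($(\psi-1)K\in L^{1,\infty}$ plus ``Hausdorff--Young''), which glosses over the same point --- the positive-kernel example above shows that membership in $L^{1,\infty}$ alone does not yield $L^p$--$L^p$ boundedness --- so your instinct that the cancellation structure must be invoked is sound; it is only the justification of the $L^2$ bound that needs to be replaced by a non-circular one.
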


\begin{proof} As in \cite{folland}, Proposition 1.11, we have only notice that
$|(1-\psi ) K(x)| \le C_\psi |x|^{\alpha-Q}$, so that $(1-\psi ) K \in L^{Q/(Q-\alpha),\infty}(\he n)$,
and thereforet i) and ii) hold true.

Suppose now $\alpha=0$.
Notice that $(\psi-1)K \in L^{1,\infty}(\he n)$,
and therefore also $u\to ((\psi-1)K)\ast u$ is $L^p-L^p$ continuous by
Hausdorff-Young Theorem. This proves that iii) holds true.
\end{proof}

\begin{remark}\label{truncation rem}
By Theorem \ref{hls folland}, Lemma \ref{truncation} still holds if we replace $(1-\psi)K$
by $\psi K$.
\end{remark}

 The following (well known) estimate will be useful in the sequel.
 \begin{lemma}\label{pointwise}
 Let $g$ be a a kernel of type $\mu>0$.
 Then, if $f\in \mc D(\he n)$ and $R$ is an homogeneous
 polynomial of degree $\ell\ge 0$ in the horizontal derivatives,
 we have
 $$
R( f\ast g)(p)= O(|p|^{\mu-Q-\ell})\quad\mbox{as }p\to\infty.
 $$
 On the other hand, if  $g$ is
 a smooth function in $\he n\setminus\{0\}$ that
 satisfies the logarithmic estimate
$|g(p)|\le C(1+|\ln|p|| )$ and in addition
its horizontal derivatives are homogeneous of degree $-1$
with respect to group dilations,
 then, if $f\in \mc D(\he n)$ and $R$ is an homogeneous
 polynomial of degree $\ell\ge 0$ in the horizontal derivatives,
 we have
 \begin{eqnarray*}
R( f\ast g)(p)&=&O(|p|^{-\ell})\quad \mbox{as }p\to\infty
\quad\mbox{ if $\ell>0$}; \\
R( f\ast g)(p)&=&O(\ln|p| )\quad \mbox{as }p\to\infty
\quad\mbox{ if $\ell=0$}.
\end{eqnarray*}

 \end{lemma}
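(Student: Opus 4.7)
The plan is to use the fact that horizontal derivatives, being left invariant, commute with convolution on the right: $R(f\ast g) = f\ast(Rg)$, as recalled earlier in this section. This reduces matters to a pointwise estimate of the convolution integral for large $\rho(p)$, where the integrand can be controlled by homogeneity.

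For the first statement, iterating Proposition \ref{kernel}(ii) shows that $Rg$ is smooth on $\he n\setminus\{e\}$ and homogeneous of degree $\mu-Q-\ell$ under the dilations $\delta_\lambda$. Fix $r_0$ with $\supp f \subset B_\rho(e,r_0)$. For $\rho(p)$ large compared with $r_0$, the pseudo-triangle inequality for the Kor\'anyi gauge yields $\rho(q^{-1}\cdot p) \asymp \rho(p)$ uniformly for $q \in \supp f$. Inserting this into
$$R(f\ast g)(p) = \int f(q)\,(Rg)(q^{-1}\cdot p)\,dq$$
and bounding $|(Rg)(q^{-1}\cdot p)| \leq C\,\rho(q^{-1}\cdot p)^{\mu-Q-\ell}$ yields the claimed $O(\rho(p)^{\mu-Q-\ell})$ bound after integration against the bounded compactly supported $f$.

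For the logarithmic case with $\ell \geq 1$, the same strategy applies: writing $R = W_{i_\ell}\cdots W_{i_1}$, the hypothesis that each horizontal derivative of $g$ is smooth off the origin and homogeneous of degree $-1$, together with left-invariance and dilation covariance, yields by iteration that $Rg$ is smooth on $\he n\setminus\{e\}$ and homogeneous of degree $-\ell$. One then estimates $|(Rg)(q^{-1}\cdot p)| \leq C\,\rho(p)^{-\ell}$ exactly as before. When $\ell = 0$, $R$ acts as a scalar and one simply uses $|g(q^{-1}\cdot p)| \leq C(1+|\ln\rho(q^{-1}\cdot p)|) = O(\ln\rho(p))$ uniformly in $q \in \supp f$; integration against $f$ preserves this.

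The only point requiring care is the uniform asymptotic comparison $\rho(q^{-1}\cdot p) \asymp \rho(p)$ for $q$ in a bounded set and $\rho(p)\to\infty$, which is an elementary consequence of the pseudo-triangle inequality for $\rho$. Once this is in hand, the remainder of the argument is bookkeeping inside the convolution integral.
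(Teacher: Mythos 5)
Your argument is correct, and it is the standard one: the paper itself offers no proof of this lemma (it is stated as ``well known''), so there is nothing to compare against. The key steps --- moving $R$ onto $g$ via left-invariance, observing that $Rg$ is smooth away from the origin and homogeneous of degree $\mu-Q-\ell$ (resp.\ $-\ell$ in the logarithmic case), and using the uniform comparison $\rho(q^{-1}\cdot p)\asymp\rho(p)$ for $q$ in the compact support of $f$ --- are exactly what is needed. The only detail worth a word is that for $p$ outside $\mathrm{supp}\, f$ the pairing $f\ast(Rg)(p)=\langle Rg,\,u\mapsto f(p\cdot u^{-1})\rangle$ only tests $Rg$ on a compact set away from the origin, so any distributional part of $Rg$ supported at the origin (possible when $\mu-\ell\le 0$) does not contribute and the classical derivative may be used under the integral sign; this is implicit in your write-up and is routine.
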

 
 Since we have fixed a left-invariant moving frame for $E_0^\bullet$, a $(N_h\times N_k)$-matrix whose entries are kernels of type $\alpha$
 defines in a natural way an operator from $E_0^h$ to $E_0^k$. We still refer to this operator as to an operator
 associated with a (matrix-valued) kernel of type $\alpha$.

 \begin{definition}\label{rumin laplacian} 
In $\he n$, following \cite{rumin_jdg}, we define
the operator $\Delta_{\he{},h}$  on $E_0^h$ by setting
\begin{equation*}
\Delta_{\he{},h}=
\left\{
  \begin{array}{lcl}
     d_c\delta_c+\delta_c d_c\quad &\mbox{if } & h\neq n, n+1;
     \\ (d_c\delta_c)^2 +\delta_cd_c\quad& \mbox{if } & h=n;
     \\d_c\delta_c+(\delta_c d_c)^2 \quad &\mbox{if }  & h=n+1.
  \end{array}
\right.
\end{equation*}

\end{definition}

Notice that $-\Delta_{\he{},0} = \sum_{j=1}^{2n}(W_j^2)$ is the usual sub-Laplacian of
$\he n$.

 For sake of simplicity, since a basis  of $E_0^h$
is fixed, the operator $\Delta_{\he{},h}$ can be identified with a matrix-valued map, still denoted
by $\Delta_{\he{},h}$
\begin{equation}\label{matrix form}
\Delta_{\he{},h} = (\Delta_{\he{},h}^{ij})_{i,j=1,\dots,N_h}: \mc D'(\he{n}, \rn{N_h})\to \mc D'(\he{n}, \rn{N_h}),
\end{equation}
where $\mc D'(\he{n}, \rn{N_h})$ is the space of vector-valued distributions on $\he n$.

This identification makes possible to avoid the notion of currents: we refer to \cite{BFTT} for
a more elegant presentation.

It is proved in \cite{rumin_jdg} that $\Delta_{\he{},h}$ is
hypoelliptic and maximal hypoelliptic in the sense of \cite{HN}. In
general, 
if $\mc L$ is a differential operator  on
$\mc D'(\he{n},\rn {N_h})$, then $\mc L$ is said hypoelliptic if 
for any open set $\mc V\subset \he{n}$ 
where $\mc L\alpha$ is smooth, then $\alpha$ is smooth in $\mc V$.
In addition, if $\mc L$ is
homogeneous of degree $a\in\mathbb N$,
we say that $\mc L$ is maximal hypoelliptic if
 for any  $\delta>0$ there exists $C=C(\delta)>0$ such that for any
homogeneous
polynomial $P$ in $W_1,\dots,W_{2n}$ of degree $a$
we have
$$
\|P\alpha\|_{L^{ 2}(\he n, \rn{N_h})}\le C
\left(
\|\mc L\alpha\|_{L^{ 2}(\he n, \rn{N_h})}+\|\alpha\|_{L^{ 2}(\he n, \rn{N_h})}
\right).
$$
for any $\alpha\in \mc D(B_\rho(0,\delta),\rn {N_h})$.

Combining \cite{rumin_jdg}, Section 3,   and \cite{BFT3}, Theorems 3.1 and 4.1, we obtain the following result.

\begin{theorem}[see \cite{BFP}, Theorem 4.6] \label{global solution}
If $0\le h\le 2n+1$, then the differential operator $\Delta_{\he{},h}$ is
hypoelliptic of order $a$, where $a=2$ if $h\neq n, n+1$ and  $a=4$ 
if $h=n, n+1$ with respect to group dilations. Then
\begin{enumerate}
\item[i)] for $j=1,\dots,N_h$ there exists
\begin{equation}\label{numero}
    K_j =
\big(K_{1j},\dots, K_{N_h j}\big), \quad j=1,\dots N_h
\end{equation}
 with $K_{ij}\in\mc D'(\he{n})\cap \mc
E(\he{n} \setminus\{0\})$,
$i,j =1,\dots,N$;
\item[ii)] if $a<Q$, then the $K_{ij}$'s are
kernels of type $a$
 for
$i,j
=1,\dots, N_h$

 If $a=Q$,
then the $K_{ij}$'s satisfy the logarithmic estimate
$|K_{ij}(p)|\le C(1+|\ln\rho(p)|)$ and hence
belong to $L^1_{\mathrm{loc}}(\he{n})$.
Moreover, their horizontal derivatives  $W_\ell K_{ij}$,
$\ell=1,\dots,2n$, are
kernels of type $Q-1$;
\item[iii)] when $\alpha\in
\mc D(\he{n},\rn {N_h})$,
if we set
\begin{equation}\label{numero2}
    \mc K\alpha:=
\big(    
    \sum_{j}\alpha_j\ast  K_{1j},\dots,
     \sum_{j}\alpha_j\ast  K_{N_hj}\big),
\end{equation}
 then $ \Delta_{\he{},h}\mc K\alpha =  \alpha. $
Moreover, if $a<Q$, also $\mc K\Delta_{\he{},h} \alpha =\alpha$.

\item[iv)] if $a=Q$, then for any $\alpha\in
\mc D(\he{n},\rn {N_h})$ there exists 
$\beta_\alpha:=(\beta_1,\dots,\beta_{N_h})\in \rn{N_h}$,  such that
$$\mc K \Delta_{\he{},h}\alpha - \alpha = \beta_\alpha.$$

%
\end{enumerate}
\end{theorem}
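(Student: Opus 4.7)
The plan is to deduce the theorem from Rumin's maximal hypoellipticity statement for $\Delta_{\he{},h}$ on $E_0^h$ (\cite{rumin_jdg}, Section 3) combined with the general machinery for fundamental solutions of left-invariant, homogeneous, maximal hypoelliptic matrix-valued operators developed in \cite{BFT3}, Theorems 3.1 and 4.1. I would open by observing that in the chosen left-invariant moving frame $\Delta_{\he{},h}$ becomes a matrix-valued operator as in \eqref{matrix form}, each entry being a left-invariant differential operator $\delta_r$-homogeneous of degree $a$, with $a=2$ if $h\neq n,n+1$ and $a=4$ otherwise. Rumin shows, in the matrix sense of \eqref{matrix form}, that this operator is maximal hypoelliptic in the sense recalled just before the theorem.

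Next, I would invoke the construction of \cite{BFT3} which associates to any such homogeneous, left-invariant, maximal hypoelliptic matrix operator $\mathcal{L}$ of homogeneous degree $a$ a matrix of distributions $K_{ij}$, smooth outside the origin, yielding the convolution inverse \eqref{numero2} that satisfies $\mathcal L\mc K\alpha=\alpha$ on $\mc D(\he n,\rn{N_h})$. This gives items (i), the right-inversion half of (iii), and also produces each $K_{ij}$ as the unique distribution which is a fundamental solution of the $j$-th column system and which is homogeneous of degree $a-Q$ when $a<Q$. Since $a-Q<0$, Definition \ref{type} then identifies the $K_{ij}$'s as kernels of type $a$, proving the first part of (ii).

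To handle the critical case $a=Q$ (which can only occur when $Q=4$ and $a=4$, i.e.\ in $\he{1}$ for $h=1,2$), I would follow the standard Folland--Geller recipe for the borderline of homogeneity: no true degree-$0$ homogeneous solution exists, and one instead obtains a distribution $K_{ij}$ satisfying $|K_{ij}(p)|\le C(1+|\ln\rho(p)|)$, hence locally integrable, while its horizontal derivatives $W_\ell K_{ij}$ are genuinely homogeneous of degree $-1$ and smooth away from $0$, i.e.\ kernels of type $Q-1$. This completes (ii). For the left-inverse statement, when $a<Q$ one can invoke uniqueness: $\mc K\Delta_{\he{},h}\alpha-\alpha$ is tempered, harmonic for $\Delta_{\he{},h}$, and vanishes at infinity by Lemma \ref{pointwise} applied to $\mc K\Delta_{\he{},h}\alpha$, hence is $0$. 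When $a=Q$, the same tempered-harmonic argument only yields that the difference $\mc K\Delta_{\he{},h}\alpha-\alpha$ is a constant vector in $\rn{N_h}$, which is precisely (iv), the logarithmic kernel being determined only up to an additive constant.

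The main technical obstacle will be the critical case $a=Q$: extracting both the logarithmic bound and the genuine homogeneity of the horizontal derivatives requires replacing the naive homogeneous ansatz by a regularized one and controlling the resulting additive constants, in order to produce the element $\beta_\alpha\in\rn{N_h}$ of (iv). Once this is in place, items (i)--(iv) follow by assembling the above steps, and all remaining verifications (symmetry properties of $\ccheck K_{ij}$, smoothness outside $0$, the action on $\mathcal D(\he n,\rn{N_h})$) are routine consequences of Proposition \ref{kernel} and the formulas \eqref{convolutions var}--\eqref{convolution by parts}.
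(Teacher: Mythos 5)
Your proposal follows exactly the route the paper itself takes: the paper gives no independent proof but derives the theorem by combining Rumin's (maximal) hypoellipticity of $\Delta_{\he{},h}$ from \cite{rumin_jdg}, Section 3, with the fundamental-solution machinery for homogeneous left-invariant hypoelliptic matrix operators in \cite{BFT3}, Theorems 3.1 and 4.1 (as packaged in \cite{BFP}, Theorem 4.6). Your additional details on the critical case $a=Q$ (which indeed occurs only for $n=1$, $h=1,2$) and the uniqueness argument for the left inverse are consistent with that machinery and with the analogous reasoning used later in the paper's proof of Lemma \ref{comm}.
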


\begin{remark}\label{K}
Coherently with formula \eqref{matrix form}, the operator $\mc K$ can be identified
with an operator (still denoted by $\mc K$) acting on smooth compactly
supported differential forms in $\mc D(\he n, E_0^h)$. Moreover, when the notation
will not be misleading, we shall denote by $\alpha \to \Delta^{-1}_{\mathbb H,h }\alpha$  the convolution with $\mc K$
acting on forms of degree $h$.

\end{remark}

\begin{lemma}\label{comm} 
If $\alpha\in\mc D(\he n, E_0^h)$
\begin{itemize}
\item[i)]$
d_c \Delta^{-1}_{\mathbb H, h}\alpha = \Delta^{-1}_{\mathbb H, h+1} d_c\alpha$, \qquad $h=0,1,\dots, 2n$, 
\qquad $h\neq n-1, n+1$.

\item[ii)] $d_c \Delta^{-1}_{\mathbb H, n-1}\alpha = d_c\delta_c\Delta^{-1}_{\mathbb H, n} d_c\alpha$ \qquad ($h=n-1$).

\item[iii)]$
d_c\delta_c d_c \Delta^{-1}_{\mathbb H, n+1}\alpha = \Delta^{-1}_{\mathbb H, n+2} d_c\alpha$,
 \qquad ($h=n+1$).

\item[iv)]$\delta_c \Delta^{-1}_{\mathbb H, h}\alpha = \Delta^{-1}_{\mathbb H, h-1} \delta_c\alpha$
 \qquad $ h=1,\dots, 2n+1$, \qquad $h\neq n, n+2$.
 
 \item[v)] $\delta_c \Delta^{-1}_{\mathbb H, n+2}\alpha = \delta_c d_c\Delta^{-1}_{\mathbb H, n+1} 
 \delta_c\alpha$ \qquad ($h=n+2$).

\item[vi)]$
\delta_c d_c \delta_c  \Delta^{-1}_{\mathbb H, n}\alpha = \Delta^{-1}_{\mathbb H, n-1} \delta_c \alpha$,
 \qquad ($h=n$).
\end{itemize}
\end{lemma}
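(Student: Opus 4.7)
The plan is to derive each identity from an algebraic intertwining between $d_c$ (or $\delta_c$) and the Rumin Laplacians, then invert the Laplacians via Theorem~\ref{global solution}. The only algebraic inputs beyond the definitions are $d_c^2 = 0$, $\delta_c^2 = 0$, and the piecewise definition of $\Delta_{\mathbb H, h}$.

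For parts (i) and (iv), where both Laplacians involved carry the same order, I would first check the clean intertwinings
\begin{equation*}
\Delta_{\mathbb H, h+1}\, d_c = d_c\, \Delta_{\mathbb H, h}, \qquad \Delta_{\mathbb H, h-1}\, \delta_c = \delta_c\, \Delta_{\mathbb H, h}
\end{equation*}
by direct expansion. Only the middle-degree sub-cases need attention: for $h=n$ in (i), $d_c\Delta_{\mathbb H,n} = d_c[(d_c\delta_c)^2 + \delta_c d_c] = d_c\delta_c d_c$ and $\Delta_{\mathbb H,n+1}d_c = [d_c\delta_c + (\delta_c d_c)^2]d_c = d_c\delta_c d_c$ using $d_c^2 = 0$; all other sub-cases reduce to $d_c\delta_c + \delta_c d_c$ on both sides and the intertwining is even more immediate. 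Sandwiching by the appropriate $\mathcal{K}_{h\pm 1}$ and $\mathcal{K}_h$ then produces the identity on test forms.

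The twisted cases (ii), (iii), (v), (vi) cannot be obtained by a direct intertwining, because adjacent Laplacians at middle dimension have different orders. The common scheme I would use is: set $\beta := \mathcal{K}_h\alpha$, expand $d_c\alpha$ (or $\delta_c\alpha$) using $\alpha = \Delta_{\mathbb H, h}\beta$, and recognise the result as a Laplacian applied to the proposed right-hand side. For (iii), starting from $\alpha\in E_0^{n+1}$ one gets $d_c\alpha = d_c[d_c\delta_c + (\delta_c d_c)^2]\beta = (d_c\delta_c)^2 d_c\beta$; since $d_c\delta_c d_c\beta\in E_0^{n+2}$ is automatically $d_c$-closed, $\Delta_{\mathbb H, n+2}(d_c\delta_c d_c\beta) = d_c\delta_c\cdot d_c\delta_c d_c\beta = d_c\alpha$, and applying $\mathcal{K}_{n+2}$ yields the stated formula; (vi) is its mirror image. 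For (ii) and (v) one additionally needs that $d_c\delta_c$ commutes with $\Delta_{\mathbb H, n}$ on $E_0^n$ (and dually $\delta_c d_c$ with $\Delta_{\mathbb H, n+1}$ on $E_0^{n+1}$), which follows instantly from $(d_c\delta_c)(\delta_c d_c) = (\delta_c d_c)(d_c\delta_c) = 0$; this lets one commute $d_c\delta_c$ past $\mathcal{K}_n$, so in (ii) one concludes with
\begin{equation*}
d_c\beta \;=\; \mathcal{K}_n\Delta_{\mathbb H, n}\, d_c\beta \;=\; \mathcal{K}_n\, d_c\delta_c d_c\alpha \;=\; d_c\delta_c\, \mathcal{K}_n d_c\alpha.
\end{equation*}

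The only potential technical wrinkle is the exceptional regime of Theorem~\ref{global solution}(iv), where $\mathcal{K}\Delta_{\mathbb H, h}\alpha$ differs from $\alpha$ by a constant $\beta_\alpha\in\R^{N_h}$ (arising only for $n=1$, $h\in\{n,n+1\}$). Since each identity of the lemma is pre-composed with an outer $d_c$ or $\delta_c$, and these are left-invariant homogeneous horizontal differential operators which annihilate left-invariant (constant-in-the-frame) sections of $\mathcal{E}_0^\bullet$, the stray $\beta_\alpha$ drops out of every final formula. The main obstacle is therefore not analytic but bookkeeping: keeping track of which pattern $(d_c\delta_c)^k d_c$ to use in each twisted case so that exactly the right Laplacian is produced. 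Once this is laid out case by case, the proof is a direct algebraic verification.
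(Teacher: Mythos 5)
Your algebraic computations are the same ones the paper uses: the intertwinings $\Delta_{\mathbb H,h+1}d_c=d_c\Delta_{\mathbb H,h}$ (with the corrected patterns $d_c\delta_c\,\cdot$, $\,d_c\delta_c d_c\,\cdot$ in the middle degrees) are exactly what appears in the displayed chains of the paper's proof. But there is a genuine gap in how you conclude. Your final step in every case is to ``apply $\mathcal{K}$'' on the left, i.e.\ to use $\mathcal{K}_{h+1}\Delta_{\mathbb H,h+1}\gamma=\gamma$ with $\gamma=d_c\mathcal{K}_h\alpha$ (or $d_c\delta_c d_c\mathcal{K}_{n+1}\alpha$, etc.). Theorem \ref{global solution} only gives $\mathcal{K}\Delta_{\mathbb H,h}\alpha=\alpha$ for $\alpha\in\mathcal{D}(\he n,\R^{N_h})$, and the forms $\gamma$ you need it for are \emph{not} compactly supported: they are convolutions with kernels of type $1$ or $2$, hence merely decay like $|x|^{1-Q}$ at infinity. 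For such $\gamma$ the identity $\mathcal{K}\Delta\gamma=\gamma$ is not free — it can fail by a $\Delta$-harmonic term, and establishing that this term vanishes is precisely the analytic content of the lemma. (The constant $\beta_\alpha$ of Theorem \ref{global solution}(iv) that you do address is a separate, smaller issue.)

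The paper closes this gap differently: it sets $\omega_h$ equal to the \emph{difference} of the two sides, uses your intertwinings together with the legitimate identity $\Delta_{\mathbb H,h}\mathcal{K}_h\alpha=\alpha$ (valid since $\alpha$ is a test form) to show $\Delta_{\mathbb H,h+1}\omega_h=0$, observes that $\omega_h=M_h\ast\alpha$ with $M_h$ a kernel of type $1$ so that $\omega_h(x)=O(|x|^{1-Q})$ by Lemma \ref{pointwise}, and then invokes a Liouville-type theorem (\cite{BFT3}, Proposition 3.2: entire $\Delta_{\mathbb H}$-harmonic forms have polynomial coefficients) to force $\omega_h\equiv 0$. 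Your argument becomes correct once you replace ``apply $\mathcal{K}$'' by this harmonicity-plus-decay-plus-Liouville step; without it, the proof is incomplete at exactly the point where something nontrivial must be proved.
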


\begin{proof} Let us prove i), ii), iii). The remaining assertions will follow by Hodge
duality. Put 
\begin{align*}
\omega_h : &= d_c \Delta^{-1}_{\mathbb H, h}\alpha - \Delta^{-1}_{\mathbb H, h+1} d_c\alpha
\qquad\mbox{if } h\neq n-1, n+1,
\\
\omega_{n-1}: &= d_c \Delta^{-1}_{\mathbb H, n-1}\alpha - d_c\delta_c\Delta^{-1}_{\mathbb H, n} d_c\alpha
\\
\omega_{n+1}: &= d_c\delta_c d_c \Delta^{-1}_{\mathbb H, n+1}\alpha - \Delta^{-1}_{\mathbb H, n+2} d_c\alpha.
\end{align*}
We notice first that, by Theorem \ref{global solution} and Proposition
\ref{kernel}, for all $h=1,\dots,2n$, $\omega_h= M_h\ast \alpha$, where $M_h$ is a kernel
of type 1. 
Thus, by Lemma \ref{pointwise}
\begin{equation}\label{infinity}
\omega_h(x) = O(|x|^{1-Q}) \qquad\mbox{as $ x\to\infty$}.
\end{equation}

We want to show now that
\begin{equation}\label{harmonic}
\Delta^{-1}_{\mathbb H, h+1} \omega_h = 0 \qquad\mbox{for } h=1 ,\dots, 2n.
\end{equation}

Suppose first $h\neq n-1, n, n+1$.  By Theorem \ref{global solution}, we have:
\begin{equation*}\begin{split}
\Delta_{\mathbb H, h+1}  \omega_h & = d_c  \delta_c d_c \Delta^{-1}_{\mathbb H, h}\alpha- d_c\alpha
\\&
= d_c \Delta_{\mathbb H, h} \Delta^{-1}_{\mathbb H, h}\alpha - d_c\alpha = 0.
\end{split}\end{equation*}

If $h=n-1$, then
\begin{equation*}\begin{split}
\Delta_{\mathbb H, n}  \omega_{n-1} & = 
d_c\delta_c d_c \delta_c \Big(
d_c \Delta^{-1}_{\mathbb H, n-1}\alpha - d_c\delta_c\Delta^{-1}_{\mathbb H, n} d_c\alpha\Big)
\\&
=
d_c\delta_c d_c\Delta_{\mathbb H, n-1}  \Delta^{-1}_{\mathbb H, n-1}\alpha
- d_c\delta_c  \Delta^{-1}_{\mathbb H, n} \Delta^{-1}_{\mathbb H, n} d_c\alpha = 0.
\end{split}\end{equation*}

If $h=n$, then (keeping in mind that $d_c \Delta^{-1}_{\mathbb H, n}\alpha$
is a form of degree $n+1$ and  $ \Delta^{-1}_{\mathbb H, n}\alpha$
is a form of degree $n$)
\begin{equation*}\begin{split}
\Delta_{\mathbb H, n+1}  \omega_n & = (   ( \delta_c d_c)^2+          d_c  \delta_c  )d_c \Delta^{-1}_{\mathbb H, n}\alpha- d_c\alpha
\\&
= d_c ( \delta_c  d_c + (d_c\delta_c)^2) \Delta^{-1}_{\mathbb H,n}\alpha - d_c\alpha
\\&
= d_c \Delta_{\mathbb H,n} \Delta^{-1}_{\mathbb H,n}\alpha - d_c\alpha = 0.
\end{split}\end{equation*}

Finally, if $h=n+1$, then
\begin{equation*}\begin{split}
\Delta_{\mathbb H, n+2}  \omega_{n+1} & = 
d_c\delta_c d_c\delta_c d_c \Delta^{-1}_{\mathbb H, n+1}\alpha -  d_c\alpha
\\&
= d_c\Delta_{\mathbb H, n+1}  \Delta^{-1}_{\mathbb H, n+1}\alpha -  d_c\alpha
=0.
\end{split}\end{equation*}

This proves \eqref{harmonic}.

Thus, by  \cite{BFT3}, Proposition 3.2, $\omega $ is a polynomial coefficient form.
Then, by \eqref{infinity} necessarily
 $\omega\equiv 0$. 
 
 This proves i), ii), iii).

%
%
%
%
%
%
\end{proof}

\section{Function spaces}\label{function spaces}


\subsection{Sobolev spaces}

Since here we are dealing only with integer order Folland-Stein function spaces, we
can give this simpler definition (for a general presentation, see e.g. \cite{folland}).

\begin{definition}\label{integer spaces} If $U\subset \he n$ is an open set, $1\le p \le\infty$
and $m\in\mathbb N$, then
the space $W^{m,p}(U)$
is the space of all $u\in L^p(U)$ such that
$$
W^Iu\in L^p(U)\quad\mbox{for all multi-indices $I$ with } d(I)=m,
$$
endowed with the natural norm.

\end{definition}

\begin{theorem} If $U\subset \he n$, $1\le p < \infty$, and $k\in\mathbb N$, then
\begin{itemize}
\item[i)] $ W^{k,p}(U)$ is a Banach space;
\item[ii)] $ W^{k,p}(U)\cap C^\infty (U)$ is dense in $ W^{k,p}(U)$;
\item[iii)] if $U=\he n$, then $\mc D(\he n)$ is dense in $ W^{k,p}(U)$.
\end{itemize}

\end{theorem}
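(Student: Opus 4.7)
\medskip

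\noindent\textbf{Proof plan.}

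For part (i), completeness follows from the corresponding property of $L^p(U)$. If $\{u_j\}$ is Cauchy in $W^{k,p}(U)$, then for every multi-index $I$ with $d(I)\le k$ the sequence $\{W^I u_j\}$ is Cauchy in $L^p(U)$, hence converges to some $v_I\in L^p(U)$. Testing against $\phi\in\mc D(U)$ and integrating by parts shows $v_I = W^I u$ in the sense of distributions where $u := v_0 = \lim u_j$. Therefore $u\in W^{k,p}(U)$ and $u_j\to u$ in $W^{k,p}(U)$.

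For part (ii), I would follow the classical Meyers--Serrin scheme, adapted to group convolution. Fix a non-negative $\eta\in\mc D(\he n)$ with $\int\eta=1$ supported near the origin, and define $\eta_\eps(p):=\eps^{-Q}\eta(\delta_{1/\eps}p)$. Then for any $f\in L^p_{\mathrm{loc}}(\he n)$, $f\ast\eta_\eps\in C^\infty$ and $W^I(f\ast\eta_\eps)=(W^I f)\ast\eta_\eps\to W^I f$ in $L^p$ on compact sets. Now exhaust $U$ by open sets $U_j$ with $\overline{U_j}\Subset U_{j+1}\Subset U$ and choose a locally finite partition of unity $\{\chi_j\}\subset\mc D(U)$ subordinate to the cover $\{U_{j+1}\setminus\overline{U_{j-1}}\}$. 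Given $\delta>0$, for each $j$ pick $\eps_j>0$ small enough that $\mathrm{supp}((\chi_j u)\ast\eta_{\eps_j})\subset U_{j+1}\setminus\overline{U_{j-1}}$ and $\|(\chi_j u)\ast\eta_{\eps_j}-\chi_j u\|_{W^{k,p}}<\delta\,2^{-j}$; the latter is possible because $\chi_j u\in W^{k,p}$ with compact support in $U$ and convolution with $\eta_{\eps_j}$ commutes with $W^I$. Then $v:=\sum_j(\chi_j u)\ast\eta_{\eps_j}$ is a locally finite sum of smooth functions, hence smooth in $U$, and $\|v-u\|_{W^{k,p}(U)}<\delta$. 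The only subtlety is that cutting off by $\chi_j$ produces commutator terms $[W^I,\chi_j]u$, which, by a Leibniz-type expansion with the horizontal fields $W_i$, are controlled by the $W^{k-1,p}$ norm of $u$ on a neighbourhood of $\mathrm{supp}\,\chi_j$ -- harmless since $u\in W^{k,p}$.

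For part (iii), given $u\in W^{k,p}(\he n)$, first apply (ii) to replace $u$ by a smooth $v\in W^{k,p}(\he n)\cap C^\infty(\he n)$ with $\|u-v\|_{W^{k,p}}<\delta/2$. Then truncate: fix $\phi\in\mc D(\he n)$ with $\phi\equiv 1$ near the origin, $0\le\phi\le 1$, and set $\phi_R(p):=\phi(\delta_{1/R}p)$. Homogeneity of the dilations gives $W_i\phi_R(p)=R^{-1}(W_i\phi)(\delta_{1/R}p)$ for $i=1,\dots,2n$ and $T\phi_R(p)=R^{-2}(T\phi)(\delta_{1/R}p)$, so for any multi-index $I$ with $d(I)\ge 1$, $\|W^I\phi_R\|_{L^\infty}=O(R^{-1})$. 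Expanding $W^I(\phi_R v)=\phi_R W^I v+\sum_{|J|<|I|}c_{I,J}(W^{I-J}\phi_R)(W^J v)$ and using dominated convergence on the first term together with the $O(R^{-1})$ bounds on the remaining ones shows $\phi_R v\to v$ in $W^{k,p}(\he n)$. For $R$ large enough, $\phi_R v\in\mc D(\he n)$ and $\|\phi_R v-v\|_{W^{k,p}}<\delta/2$, concluding the proof.

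The main obstacle is the Leibniz bookkeeping in (ii): the commutator $[W^I,\chi_j]$ must be expanded using the anisotropic degree $d(I)$ rather than the usual order $|I|$, so some care is needed to verify that each term involves strictly fewer derivatives of $u$ and thus lives in a Sobolev space already under control. Once this is arranged, the mollification argument proceeds exactly as in the Euclidean Meyers--Serrin theorem.
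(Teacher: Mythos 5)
The paper does not actually prove this theorem: it is quoted as a standard fact about Folland--Stein spaces (with a pointer to \cite{folland}), so your Meyers--Serrin-type argument has to stand on its own. Part (i) is fine. The architecture of (ii) and (iii) is also the right one, but there is a concrete error at the heart of (ii): with the group convolution $f\ast g(p)=\int f(q)g(q^{-1}\cdot p)\,dq$ used in this paper, a \emph{left}-invariant operator satisfies $P(f\ast g)=f\ast Pg$, not $(Pf)\ast g$. Hence $W^I(f\ast\eta_\eps)=f\ast W^I\eta_\eps$, and your claimed identity $W^I(f\ast\eta_\eps)=(W^I f)\ast\eta_\eps$ is false in general: left-invariant fields commute with left translations, whereas the map $q\mapsto q^{-1}\cdot p$ is inversion followed by a left translation acting on the wrong slot, so the derivative lands on the mollifier, not on $f$. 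The fix is standard but must be made: mollify on the other side, i.e.\ use $\eta_\eps\ast f$, for which $W^I(\eta_\eps\ast f)=\eta_\eps\ast W^I f\to W^I f$ in $L^p$ and which is still smooth; the support bookkeeping ($\mathrm{supp}(\eta_\eps\ast h)\subset\mathrm{supp}\,\eta_\eps\cdot\mathrm{supp}\,h$) then goes through as you describe.

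The second issue is one you flag yourself but do not resolve. Definition \ref{integer spaces} controls only $u$ and the derivatives $W^I u$ with $d(I)=k$ exactly, whereas the commutator terms $[W^I,\chi_j]u$ in (ii) and the cutoff terms in (iii) produce $W^J u$ with $0<d(J)<k$. That these intermediate derivatives belong to $L^p$ (locally in (ii), globally in (iii)) is not part of the definition; it requires the Folland--Stein interpolation inequalities for the anisotropic Sobolev scale (see \cite{folland}). Without invoking them, the assertion that the commutator terms are ``controlled by the $W^{k-1,p}$ norm of $u$'' is circular. With the mollifier moved to the correct side and the interpolation inequality cited, your argument does close and reproduces the standard proof of this classical statement.
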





%

\begin{definition} If
$1\le p<\infty$,
we denote  by $\WO{k}{p}{U}$
the completion of $\mc D(U)$ in $W^{k,p}(U)$.
If $U$ is bounded, then
by (iterated) Poincar\'e inequality (see e.g. \cite{jerison}), it follows that the norms
\begin{equation*}
\|u\|_{W^{k,p}(U)} \quad\mbox{and}\quad
\sum_{d(I)=k}\| W^I u\|_{ L^p(U)}
\end{equation*}
are equivalent on $\WO{k}{p}{U}$ when $1\le p<\infty$.
\end{definition}

\medskip

Finally, $W^{k,p}_{\mathrm{Euc}}(U)$
denotes the usual Sobolev space.

%
%
%
%

\subsection{Negative spaces}

\begin{definition}\label{negative spaces}
If $U\subset \he n$ is an open set and $1< p<\infty$,  $W^{-k,p}(U)$ is the dual space of
$\WO{k}{p'}{U}$, where $1/p+1/p'=1$. It is well known that
\begin{equation*}
W^{-k,p}(U)=\{f_0+\sum_{d(I)=k}W^If_I, \;f_0,  f_I\in
 L^p(U) \mbox{ for any $I$ such that } d(I)=k\},
\end{equation*}
and
\begin{equation*}
\|u\|_{W^{-k,p}(U)}\approx
\inf\{ \| f_0 \|_{L^p(U)} + \sum_I \| f_I \|_{L^p(U)}\,;\,
d(I)=k,  f_0+\sum_{d(I)=k}W^If_I =u \}.
\end{equation*}

If $U$ is bounded, then we can take $f_0=0$.

Finally, we stress  that
\begin{equation*}
\{f_0+\sum_{d(I)=k}W^If_I, \;f_0,  f_I\in
\mc D(U) \mbox{ for any $I$ such that } d(I)=k\}
\end{equation*}
is dense in $W^{-k,p}(U)$.
\end{definition}

\begin{definition} \label{dual spaces forms} If $U\subset \he n$ is an open set, $0\le h\le 2n+1$, $1\le p\le \infty$ and $m\ge 0$,
we denote by $W^{m,p}(U,\cov{h})$ (by $\WO{m}{p}{U,\cov{h}}$)
the space of all sections of $\cov{h}$ such that their
components with respect to a given left-invariant frame  belong to
$W^{m,p}(U)$ (to $\WO{m}{p}{U}$, respectively), endowed with its natural norm. Clearly, this definition
is independent of  the choice of the frame itself.

The spaces $W^{m,p}(U,E_0^{h})$ and $\WO{m}{p}{U,E_0^h}$ are defined in the same way.

On the other hand, the spaces 
$$
W^{-m,p}(U,E_0^{h}):= \Big(\WO{m}{p'}{U,E_0^h}\Big)^*
$$ can be viewed as spaces of currents
on $(E_0^\bullet,d_c)$ as in \cite{BFTT},  Proposition 3.14.  Again as in \cite{BFTT},  Proposition 3.14,
an element of $W^{-m,p}(U,E_0^{h})$ can be identified (with respect to our basis) with
a $N_h$-ple 
$$
(T_1,\dots,T_{N_h}) \in \Big( W^{-m,p}(U,E_0^{h})\Big)^{N_h}
$$
(this is nothing but the intuitive notion of ``currents as differential form with distributional coefficients''). The action 
of $u\in W^{-m,p}(U,E_0^{h})$ associated with $(T_1,\dots,T_{N_h})$ on the form $\sum_j \alpha_j\xi_j^h\in \WO{m}{p'}{U,E_0^h}$
is given by
$$
\Scal{u}{\alpha}:= \sum_j \Scal{T_j}{\alpha_j}.
$$
On the other hand, suppose for sake of simplicity that $U$ is bounded, then 
by  Definition \ref{negative spaces} there exist $f_I^j \in L^p(U)$,
$j=1,\dots, N_h$, $i=1,\dots, 2n+1$ such that
\begin{equation}\label{dual spaces forms eq:1}
 \Scal{u}{\alpha} = \sum_j \sum_{d(I)=m} \int_U f_I^j(x) W^I\alpha_j(x)\, dx.
\end{equation}

\end{definition}

Alternatively, one can express duality in spaces of differential forms using the pairing between $h$-forms and $2n+1-h$-forms defined by
\begin{eqnarray*}
\alpha,\beta\mapsto\int_{U}\alpha\wedge\beta.
\end{eqnarray*}
Note that this makes sense for Rumin forms and is a nondegenerate pairing. In this manner, the dual of $L^p(U,E_0^h)$ is $L^{p'}(U,E_0^{2n+1-h})$. Hence $W^{-m,p}(U,E_0^h)$ consists of differential forms of degree $2n+1-h$ whose coefficients are distributions belonging to $W^{-m,p}(U)$.

\subsection{Contact invariance}

\begin{lem}\label{pullback}
Let $U$, $V$ be open subsets of $\he n$. Let $\phi:U\to V$ be a $C^k$-bounded contact diffeomorphism. Let $\ell=-k+1,\ldots,k-1$. Then the pull-back operator $\phi^\sharp$ from $W^{\ell,p}$ forms on $V$ to $W^{\ell,p}$ forms on $U$ is bounded, and its norm depends only on the $C^k$ norms of $\phi$ and $\phi^{-1}$.
\end{lem}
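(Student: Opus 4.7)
The plan is to establish the three cases $\ell=0$, $\ell>0$, $\ell<0$ in turn, leveraging the fact (recalled in Section~\ref{Rumin}) that $\mathcal{E}_0$ is a contact-invariant sub-bundle of $\bigwedge^\bullet T^*M$, so that $\phi^\sharp$ does map $E_0^h$-sections on $V$ to $E_0^h$-sections on $U$ in the first place.

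\emph{Step 1: the case $\ell=0$.} The pull-back of each frame element $\xi^J$ of $\mathcal{E}_0^h$ on $V$ is a combination $\sum_I a^I_J\,\xi^I$, where the coefficients $a^I_J:U\to\R$ are polynomial expressions in the matrix entries of $d\phi$ in the left-invariant frames. These coefficients are bounded in $L^\infty$ by the $C^1$-norm of $\phi$. Consequently, the components of $\phi^\sharp\omega$ in the frame on $U$ are linear combinations of terms of the form $(\omega_J\circ\phi)\cdot a^I_J$. Combining this $L^\infty$ bound with the change of variables $y=\phi(x)$, whose Jacobian is bounded from above and from below by the bi-Lipschitz constant of $\phi$ (since $\phi$ preserves the volume form $\theta\wedge(d\theta)^n$ up to a factor $\lambda^{n+1}$, where $\lambda$ is controlled), yields $\|\phi^\sharp\omega\|_{L^p(U)}\le C\|\omega\|_{L^p(V)}$.

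\emph{Step 2: positive integer $\ell\le k-1$.} Because $\phi$ is contact, $d\phi$ sends horizontal vectors to horizontal vectors, so for $1\le j\le 2n$ one can write $d\phi(W_j)=\sum_{k=1}^{2n} b^j_k\, W^V_k$, while $d\phi(T)$ decomposes along $W^V_1,\dots,W^V_{2n},T^V$; the coefficients belong to $L^\infty(U)$ with bounds controlled by $\|\phi\|_{C^1}$. Applying Leibniz to the components of $\phi^\sharp\omega$ produces a schematic identity
\begin{equation*}
W_j(\phi^\sharp\omega)=\sum_k b^j_k\,\phi^\sharp(W^V_k\omega)+M_j(\phi^\sharp\omega),
\end{equation*}
with $M_j$ a zeroth-order operator whose entries are first horizontal derivatives of the $a^I_J$'s. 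Iterating gives, for any multi-index $I$ with $d(I)=\ell$,
\begin{equation*}
W^I(\phi^\sharp\omega)=\sum_{d(J)\le\ell} c_{IJ}\,\phi^\sharp(W^J\omega),
\end{equation*}
where each $c_{IJ}$ is a polynomial expression in horizontal derivatives of $\phi$ of total order at most $\ell+1\le k$. The $C^k$-bound on $\phi$ then yields $\|c_{IJ}\|_\infty\le C$, and applying Step~1 to each term completes the bound $\|\phi^\sharp\omega\|_{W^{\ell,p}(U)}\le C\|\omega\|_{W^{\ell,p}(V)}$.

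\emph{Step 3: negative $\ell$, by duality.} Using the nondegenerate pairing $(\alpha,\beta)\mapsto\int\alpha\wedge\beta$ introduced after Definition~\ref{dual spaces forms}, which identifies the dual of $W^{\ell,p}(\cdot,E_0^h)$ with $W^{-\ell,p'}(\cdot,E_0^{2n+1-h})$, the standard change-of-variables formula gives $\int_U\phi^\sharp\omega\wedge\beta=\pm\int_V\omega\wedge(\phi^{-1})^\sharp\beta$ for test forms $\beta$ on $U$. Since $\phi^{-1}:V\to U$ is itself a $C^k$-bounded contact diffeomorphism, Step~2 applied to $\phi^{-1}$ with exponent $-\ell\ge 1$ bounds $\|(\phi^{-1})^\sharp\beta\|_{W^{-\ell,p'}(V)}$ by $C\|\beta\|_{W^{-\ell,p'}(U)}$, and the desired estimate $\|\phi^\sharp\omega\|_{W^{\ell,p}(U)}\le C\|\omega\|_{W^{\ell,p}(V)}$ follows by testing.

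The main obstacle is the bookkeeping in Step~2: one must check that the contact condition is indeed enough to express every horizontal derivative of the pull-back as a combination of pulled-back horizontal derivatives of $\omega$, and that at order $\ell$ the coefficients involve no more than $\ell+1$ horizontal derivatives of $\phi$, so that the hypothesis $\ell\le k-1$ together with the $C^k$-bound suffices. The role of the contact assumption is twofold: it is what makes $\phi^\sharp$ preserve $E_0$ (so the statement makes sense), and it is what makes $d\phi$ preserve the horizontal distribution (so the inductive expansion produces only horizontal derivatives of $\omega$, never the full gradient).
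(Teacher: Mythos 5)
Your proposal is correct and follows essentially the same route as the paper, which disposes of $\ell\ge 0$ by the chain rule and the change of variables formula and then handles negative $\ell$ by observing that, with respect to the pairing $\int\alpha\wedge\beta$, the adjoint of $\phi^\sharp$ is $(\phi^{-1})^\sharp$. You have merely filled in the bookkeeping (the expansion of $W^I(\phi^\sharp\omega)$ and the count of at most $\ell+1\le k$ derivatives of $\phi$ in the coefficients) that the paper leaves implicit.
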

When $\ell\geq 0$, this follows from the chain rule and the change of variables formula. According to the change of variables formula
\begin{eqnarray*}
\int_{U}\phi^{\sharp}\alpha\wedge \phi^{\sharp}\beta=\int_{V}\alpha\wedge\beta,
\end{eqnarray*}
the adjoint of $\phi^{\sharp}$ with respect to the above pairing is $(\phi^{-1})^{\sharp}$. Hence $\phi^{\sharp}$ is bounded on negative Sobolev spaces of differential forms as well.

\subsection{Sobolev spaces on contact sub-Riemannian manifolds}

We define Sobolev spaces (involving  a positive or negative number of derivatives) on bounded geometry contact sub-Riemannian manifolds.

Let $(M,H,g)$ be a bounded $C^k$-geometry sub-Riemannian contact manifold. Pick a uniform covering $\mathcal{U}$ by equal radius balls (uniform means that distances between centers are bounded below). Let $\phi_j:B\to U_j$ be $C^k$-bounded contact charts from the unit Heisenberg ball. Given a differential form $\omega$ on $M$, let $\omega_j=\phi_j^{\sharp}\omega$. Let $-k+1\leq \ell\leq k-1$ be an integer. Define
\begin{eqnarray*}
\|\omega\|_{\mathcal{U},\ell,p}=\left(\sum_{j}\|\omega_j\|_{W^{\ell,p}(B)}^p\right)^{1/p}.
\end{eqnarray*}

Let us show that an other uniform covering $\mathcal{U}'$ and other choices of controlled charts lead to an equivalent norm. Every piece $U$ of $\mathcal{U}$ is covered with boundedly many pieces $U'_i$ of $\mathcal{U}'$. Thus
\begin{eqnarray*}
\|\omega_j\|_{W^{\ell,p}}^p\leq \sum\|{\omega_j}_{|{\phi_j}^{-1}(U'_i)}\|_{W^{\ell,p}({\phi_j}^{-1}(U'_i))}^p.
\end{eqnarray*}
Since ${\omega_j}_{|{\phi_j}^{-1}(U'_i)}$ is the pull-back by the contactomorphism $\phi={\phi_j}\circ{\phi'_i}^{-1}$ of ${\omega_i}_{|{\phi'_i}^{-1}(U_j)}$, Lemma \ref{pullback} implies that
\begin{eqnarray*}
\|{\omega_j}_{|{\phi_j}^{-1}(U'_i)}\|_{W^{\ell,p}({\phi_j}^{-1}(U'_i))}\leq C\,\|{\omega_i}_{|{\phi'_i}^{-1}(U_j)}\|_{W^{\ell,p}({\phi'_i}^{-1}(U_j))},
\end{eqnarray*}
where the constant only depends on the uniform bound on horizontal derivatives of order $\leq k$ of $\phi$. Thus
\begin{eqnarray*}
\|\omega_j\|_{W^{\ell,p}}^p\leq \sum\|{\omega'_i}\|_{W^{\ell,p}(B)}^p.
\end{eqnarray*}
When summing over $j$, each term $\|{\omega'_i}\|$ on the right hand side occurs only a bounded number $N$ of times. This yields
\begin{eqnarray*}
\|\omega\|_{\mathcal{U},\ell,p}\leq CN^{1/p}\|\omega\|_{\mathcal{U'},\ell,p}.
\end{eqnarray*}

\section{Homotopy formulae and Poincar\'e and Sobolev inequalities}
\label{poincare}

In this paper we are mainly interested to obtain functional inequalities for differential forms that are the counterparts
of the classical $(p,q)$-Sobolev and Poincar\'e inequalities on a ball $B\subset \mathbb R^n$ with sharp exponents of the form
$$
\| u - u_{B}\|_{L^q}(B) \le C(r) \|\nabla u\|_{L^p}(B) 
$$
(as well as of its counterpart for compactly supported functions). In this case, we can choose $q= pn/(n-p)$, provided 
$p<n$. 

\begin{definition}\label{poincare def}
Take $\lambda >1$ and set  $B=B(e,1)$ and $B'=B(e,\lambda)$,
where the $B(x,r)$'s are the Kor\'anyi balls  in $\mathbb H^n$ (in particular
the balls centered at $x=e$, and then all balls, are convex).
 If $1\le k\le 2n+1$  and $q\ge p\ge 1$, we say that the interior $(p,q)$-Poincar\'e inequality holds in $E_0^k$ if 
there exists a constant $C$ such that,
 for every $d_c$-closed differential $k$-form $\omega$ in $L^p(B';E_0^k)$ there exists a differential $k-1$-form $\phi$ in $L^q(B,E_0^{k-1})$ such that $d_c\phi=\omega$ and
\begin{eqnarray*}
\|\phi\|_{L^{q}(B,E_0^{k-1})}\leq C\,\|\omega\|_{L^{p}(B',E_0^k)} \quad 
\mbox{ 
$\mathrm{interior}\, \he{}$-$\mathrm{Poincar\acute{e}}_{p,q}(k))$.
}
\end{eqnarray*}

\end{definition}

\begin{remark}\label{poincare k=1} If $k=1$ and $Q>p\ge 1$, then $(\, \he{}$-$\mathrm{Poincar\acute{e}}_{p,q}(1))$
is nothing but the usual Poincar\'e inequality with  $\displaystyle \frac{1}{p}-\frac{1}{q}= \frac{1}{Q}$
(see e.g. \cite{FLW_grenoble}, \cite{capdangar}, \cite{MSC}).
\end{remark}

\begin{remark} If we replace Rumin's complex $(E_0^\bullet, d_c)$ by the usual de Rham's complex
$(\Omega^\bullet,d)$ in $\rn{2n+1}$, then the $(p,q)$-Poincar\'e inequality holds on Euclidean balls for $k=1$
and $n>p\ge1$. If $k>1$, then the $(p,q)$-Poincar\'e inequality for $2n+1>p>1$ and $\displaystyle 
\frac{1}{p}-\frac{1}{q}= \frac{1}{2n+1}$ is proved by Iwaniec \& Lutoborski (see \cite{IL}, Corollary 4.2).
\end{remark}

The $\he{}$-$\mathrm{Poincar\acute{e}}_{p,q}(k)$ inequality (as well as its Euclidean counterpart)
can be formulated by duality as follows.

\begin{definition} \label{equiv Sobolev}
Take $\lambda >1$ and set  $B=B(e,1)$ and $B'=B(e,\lambda)$. If $1\le k\le 2n$, $1\le p\le q < \infty $ and $q\ge p$,
we say  that the (local) $\he{}$-$\mathrm{Sobolev}_{p,q}(k)$ inequality holds if  there exists a constant $C$ such that 
 for every compactly supported smooth $d_c$-closed differential $k$-form $\omega$ in $L^p(B;E_0^k)$ there exists a 
smooth compactly supported differential $(k-1)$-form $\phi$ in $L^q(B',E_0^{k-1})$ such that $d_c\phi=\omega$ in $B'$
and
\begin{eqnarray}\label{H Sobolev}
\|\phi\|_{L^{q}(B',E_0^{k-1})}\leq C\,\|\omega\|_{L^{p}(B,E_0^k)}. \quad 
\end{eqnarray}

\end{definition}

Notice that, in this case, we do not distinguish interior inequalities (in other words, we can always assume $B=B'$), basically since, when dealing with compactly
supported forms, the structure of the boundary does not affect the estimates.

\begin{remark}\label{sobolev k=1} If $k=1$ and $Q>p\ge 1$, then $(\, \he{}$-$\mathrm{Sobolev}_{p,q}(1))$
is nothing but the usual Sobolev  inequality with  $\displaystyle \frac{1}{p}-\frac{1}{q}= \frac{1}{Q}$.

\end{remark}

In \cite{IL}, starting from Cartan's homotopy formula, the authors proved  that,
if $D \subset \rn {N}$ is a convex set,
$1<p<\infty$, $1<k<N$, then  there exists a linear bounded map: 
\begin{equation}\label{Keuc}
K_{\mathrm{Euc},k}  : L^p(D, {\bigwedge}\vphantom{!}^k)\to W^{1,p}(D, {\bigwedge}\vphantom{!}^{k-1})
\end{equation}
 that
is a homotopy operator, i.e.
\begin{equation}\label{may 4 eq:1}
	\omega =  dK_{\mathrm{Euc},k} \omega + K_{\mathrm{Euc},k+1}d\omega \qquad \mbox{for all 
	$\omega\in C^\infty (D, {\bigwedge}\vphantom{!}^k)$}
\end{equation}
(see Proposition 4.1 and Lemma 4.2 in \cite{IL}). More precisely, $K_{\mathrm{Euc}}$ has the form
\begin{equation}\label{10 maggio eq:1}
K_{\mathrm{Euc},k} \omega (x)= \int_D \psi(y)K_y\omega(x) \, dy,
\end{equation}
where $\psi \in \mc D(D)$, $\int_D\psi(y)\, dy=1$, and 
\begin{equation}\label{10 maggio eq:2}\begin{split}
&\Scal{K_y\omega(x)}{\xi_1\wedge\cdots
\wedge \xi_{k-1})}:=\int_0^1t^{k-1}\Scal{\omega(tx+(1-t)y)}{(x-y)\wedge\xi_1\wedge\cdots
\wedge \xi_{k-1})}.
\end{split}\end{equation}

Starting from \cite{IL}, in \cite{mitrea_mitrea_monniaux}, Section 4,  the authors define a compact homotopy operator $J_{\mathrm{Euc},k}$ in Lipschitz star-shaped  domains in the Euclidean space $\rn {N}$, providing an explicit representation formulas 
for $J_{\mathrm{Euc},k}$, together with continuity properties among Sobolev spaces. More precisely, if $D\subset \rn {N}$ is a star-shaped Lipschitz domain and $1<k<N$, then  there exists
$$
J_{\mathrm{Euc},k} : L^{p}(D, {\bigwedge}\vphantom{!}^k) \to W^{1,p}_{0}(D, {\bigwedge}\vphantom{!}^{k-1})
$$
such that
$$
\omega = dJ_{\mathrm{Euc},k}\omega + J_{\mathrm{Euc},k+1}d\omega \qquad \mbox{for all $\omega\in 
\mc D(D, {\bigwedge}\vphantom{!}^k)$.}
$$


Take now $D=B(e,1)=:B$ and $N=2n+1$. If $\omega\in C^\infty(B,E_0^k)$, then we set
\begin{eqnarray}\label{may 4 eq:2}
K=\Pi_{E_0}\circ \Pi_E \circ K_{\mathrm{Euc}}   \circ \Pi_E
\end{eqnarray}
(for sake of simplicity, from now on we drop the index $k$ - the degree of the form -
writing, e.g., $K_{\mathrm{Euc}}$ instead of $K_{\mathrm{Euc},k}$.

Analogously, we can define
\begin{eqnarray}\label{may 31 eq:2}
J=\Pi_{E_0}\circ \Pi_E \circ J_{\mathrm{Euc}}   \circ \Pi_E.
\end{eqnarray}

Then $K$ and $J$ invert Rumin's differential $d_c$ on closed forms of the same degree. More
precisely, we have:

\begin{lemma}\label{homotopy 1} If $\omega$ is $d_c$-closed, then
\begin{equation}\label{homotopy closed}
\omega = d_cK\omega \quad\mbox{if $1\le k\le 2n+1$}\qquad\mbox{and}\qquad   
\omega = d_cJ\omega \quad\mbox{if $1\le k\le 2n$.}
\end{equation}
In addition, if $\omega$ is compactly supported in $B$, then $J\omega$ is still compactly supported in $B$.
\end{lemma}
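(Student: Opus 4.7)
The plan is to reduce the $d_c$-homotopy identity to the Euclidean one by conjugating with Rumin's projectors. Given a $d_c$-closed form $\omega\in E_0^k$, set $\alpha:=\Pi_E\omega\in E^k$. First I would verify that $\alpha$ is ordinary-$d$-closed: since $\omega\in E_0$ we have $d_c\omega=\Pi_{E_0}\,d\,\Pi_E\omega=\Pi_{E_0}(d\alpha)$; because $E=V\cap d^{-1}V$ is $d$-stable (using $d^2=0$), $d\alpha\in E^{k+1}$, and since $\Pi_{E_0}|_E$ is an isomorphism onto $E_0$ (hence injective), $\Pi_{E_0}(d\alpha)=0$ forces $d\alpha=0$.

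Second, I apply the Iwaniec--Lutoborski identity \eqref{may 4 eq:1} to $\alpha$ on $B$: $\alpha=dK_{\mathrm{Euc}}\alpha+K_{\mathrm{Euc}}d\alpha=dK_{\mathrm{Euc}}\alpha$. Now compute $d_cK\omega$ with $K=\Pi_{E_0}\Pi_EK_{\mathrm{Euc}}\Pi_E$. Since $K\omega\in E_0$, $d_cK\omega=\Pi_{E_0}\,d\,\Pi_E(K\omega)=\Pi_{E_0}\,d\,\Pi_E(\Pi_{E_0}\Pi_EK_{\mathrm{Euc}}\alpha)$. Because $\Pi_E\circ\Pi_{E_0}$ is the identity on $E$ (from the inverse relation of $\Pi_E|_{E_0}$ and $\Pi_{E_0}|_E$), the inner expression simplifies to $\Pi_EK_{\mathrm{Euc}}\alpha$. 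Then, from the commutation $d\Pi_E=\Pi_Ed$ (an easy direct check using $\Pi_E=1-d_0^{-1}d-dd_0^{-1}$ and $d^2=0$), I get $d\Pi_EK_{\mathrm{Euc}}\alpha=\Pi_E(dK_{\mathrm{Euc}}\alpha)=\Pi_E\alpha=\alpha$. Finally $\Pi_{E_0}\alpha=\Pi_{E_0}\Pi_E\omega=\omega$, yielding $d_cK\omega=\omega$. The argument for $J$ is identical, with Mitrea--Mitrea--Monniaux's identity $\omega=dJ_{\mathrm{Euc}}\omega+J_{\mathrm{Euc}}d\omega$ replacing Iwaniec--Lutoborski's; the slightly smaller range $1\le k\le 2n$ comes from the domain of definition of $J_{\mathrm{Euc}}$.

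For the support assertion, observe that $\Pi_E=1-d_0^{-1}d-dd_0^{-1}$ is a first-order differential operator and $\Pi_{E_0}$ is of order $0$, so both preserve compact supports contained in $B$. Combined with the support-preserving property of $J_{\mathrm{Euc}}$ (its defining feature from \cite{mitrea_mitrea_monniaux}, which sends compactly supported smooth forms in $B$ to compactly supported smooth forms in $B$), the composition $J=\Pi_{E_0}\Pi_EJ_{\mathrm{Euc}}\Pi_E$ preserves compact support in $B$. The only real bookkeeping concern is keeping track of which space each intermediate object lives in ($E$ versus $E_0$, smooth form versus Rumin form); the actual computation is almost tautological once the commutation $d\Pi_E=\Pi_Ed$ and the inverse relation $\Pi_E|_{E_0}=(\Pi_{E_0}|_E)^{-1}$ are in hand, so I do not expect any substantive obstacle.
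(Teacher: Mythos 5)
Your proposal is correct and follows essentially the same route as the paper: reduce to the Iwaniec--Lutoborski (resp. Mitrea--Mitrea--Monniaux) identity applied to $\Pi_E\omega$, then unwind $d_cK\omega$ using $d\Pi_E=\Pi_Ed$, $\Pi_E\Pi_{E_0}\Pi_E=\Pi_E$ and the inverse relation between $\Pi_E|_{E_0}$ and $\Pi_{E_0}|_E$, with the support claim following because $\Pi_E$, $\Pi_{E_0}$ and $J_{\mathrm{Euc}}$ all preserve supports. Your justification that $d(\Pi_E\omega)=0$ (via $d$-stability of $E$ and injectivity of $\Pi_{E_0}|_E$) is a detail the paper states without proof, and it is a correct way to fill it in.
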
 

\begin{proof} Consider for instance $d_c K\omega$. 
If $d_c\omega =0$,
then $d(\Pi_E\omega)=0$, and hence
$$
\Pi_E\omega = dK_{\mathrm{Euc}}  (\Pi_E\omega),
$$
by \eqref{may 4 eq:1}.
By \eqref{may 4 eq:2} (and recalling that $d \Pi_E= \Pi_Ed$ and $\Pi_E\Pi_{E_0} \Pi_E=\Pi_E$), 
\begin{eqnarray*}
d_cK\omega&=&\Pi_{E_0}d \Pi_E\Pi_{E_0} \Pi_E  K_{\mathrm{Euc}}    \Pi_E\omega=\Pi_{E_0}d \Pi_E K_{\mathrm{Euc}}    \Pi_E\omega
\\
&=&\Pi_{E_0} \Pi_E d K_{\mathrm{Euc}}    \Pi_E\omega=\Pi_{E_0} \Pi_E \Pi_E\omega
=\Pi_{E_0} \Pi_E \Pi_{E_0}\omega=\omega\,.
\end{eqnarray*}
Finally, if $\mathrm{supp}\,  \omega
\subset B$, then $\mathrm{supp}\, J \omega
\subset B$ since both $\Pi_E$ and $\Pi_{E_0}$ preserve the support.
\end{proof}

\begin{lemma}\label{senza nome} Put $B=B(e,1)$. Then:
\begin{itemize}
\item[i)] if $1< p < \infty$ and $k=1,\dots, 2n+1$, then $K :W^{1,p}(B, E_0^k) \to
L^p(B, E_0^{k-1})$ is bounded;
\item[ii)] if $1\le p\le \infty$ and $n+1<k \le 2n+1$, then $K :L^{p}(B, E_0^k) \to
L^p(B, E_0^{k-1})$ is compact;
\item[iii)] if $1< p< \infty$ and $k=n+1$, then $K:L^{p}(B, E_0^{n+1}) \to
L^p(B, E_0^{n})$ is bounded.
\end{itemize}
Analogous assertions hold for $1\le k\le 2n$ when we replace $K$ by $J$. In addition, $\mathrm{supp}\, J \omega
\subset B$.
\end{lemma}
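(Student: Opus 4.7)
My plan is to exploit the factorization $K = \Pi_{E_0} \circ \Pi_E \circ K_{\mathrm{Euc}} \circ \Pi_E$ and track each factor's effect on Sobolev regularity. First, I observe that $\Pi_{E_0} = 1 - d_0^{-1}d_0 - d_0 d_0^{-1}$ is an algebraic (horizontal order $0$) projector, hence bounded on $L^p$ and every $W^{\ell,p}$. The Iwaniec--Lutoborski operator $K_{\mathrm{Euc}}$ is bounded from $L^p(B,\bigwedge^\bullet)$ to $W^{1,p}_{\mathrm{Euc}}(B,\bigwedge^\bullet)$; since the horizontal vector fields $W_1,\ldots,W_{2n}$ are linear combinations of Euclidean partials with bounded coefficients on the bounded set $B$, this further gives $K_{\mathrm{Euc}}\colon L^p\to W^{1,p}_{\mathrm{Heis}}$. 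The Rellich--Kondrachov compact embedding $W^{1,p}_{\mathrm{Euc}}(B)\hookrightarrow L^p(B)$ will provide the compactness in (ii).

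The technical heart of the proof is a degree-by-degree analysis of $\Pi_E = 1 - d_0^{-1}d - d d_0^{-1}$. Decomposing $d=d_0+d_1+d_2$, where $d_j$ raises weight by $j$ and has horizontal order $j$ (using $T=[X_i,Y_i]$ to assign Heisenberg order $2$ to the Reeb direction), for $\omega\in E_0^h$ one has $d_0\omega=0$ and $\omega\in\mathcal V$, hence $d_0^{-1}\omega=0$, giving $\Pi_E\omega=\omega-d_0^{-1}(d_1\omega+d_2\omega)$. A weight count against $\mathrm{Im}(d_0)$ then shows that $d_2\omega$ always lies outside $\mathrm{Im}(d_0)$, so $d_0^{-1}d_2\omega=0$; and for $h\ge n+1$ the same weight count forces $d_0^{-1}d_1\omega=0$ as well. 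The upshot is that $\Pi_E$ restricted to $E_0^h$ is the identity when $h\ge n+1$ and is a first-order horizontal differential operator when $h\le n$. As a differential operator on arbitrary sections, $\Pi_E$ has Euclidean order $1$, hence maps $W^{1,p}_{\mathrm{Euc}}$ to $L^p$.

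With these preparations, (i) follows by chaining bounds:
\[
W^{1,p}(B,E_0^k) \xrightarrow{\Pi_E} L^p \xrightarrow{K_{\mathrm{Euc}}} W^{1,p}_{\mathrm{Euc}} \xrightarrow{\Pi_E} L^p \xrightarrow{\Pi_{E_0}} L^p.
\]
For (iii), $k=n+1$, the first $\Pi_E$ is the identity on $E_0^{n+1}$, so one may start from $L^p$ and apply the same chain, yielding $L^p$ boundedness. For (ii), $k>n+1$, the first $\Pi_E$ is again the identity, and I plan to extract compactness by rewriting $\Pi_E\circ K_{\mathrm{Euc}} = K_{\mathrm{Euc}} - d_0^{-1} d K_{\mathrm{Euc}} - d d_0^{-1}K_{\mathrm{Euc}}$ and using the Iwaniec--Lutoborski homotopy identity $dK_{\mathrm{Euc}}+K_{\mathrm{Euc}}d=1$ to trade each derivative for a further application of $K_{\mathrm{Euc}}$; the resulting expression factors through the compact Rellich embedding $W^{1,p}_{\mathrm{Euc}}(B)\hookrightarrow L^p(B)$. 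The statements for $J$ are obtained by replacing $K_{\mathrm{Euc}}$ with the support-preserving Mitrea--Mitrea--Monniaux operator $J_{\mathrm{Euc}}$; since $\Pi_E$ and $\Pi_{E_0}$ are local (differential) operators, they do not enlarge supports, so $J\omega$ remains compactly supported in $B$. The main obstacle will be justifying the weight counts that kill $d_0^{-1}d_2$ and $d_0^{-1}d_1$ case by case, and extracting genuine compactness (rather than just boundedness) in (ii) via the homotopy identity.
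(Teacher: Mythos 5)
Your treatment of (i), (iii) and the support statement follows the same route as the paper: $\Pi_{E_0}$ is algebraic, $K_{\mathrm{Euc}}$ gains one (Euclidean, hence horizontal) derivative, and $\Pi_E$ is a first--order (Euclidean) differential operator, so the chain $W^{1,p}\to L^p\to W^{1,p}\to L^p\to L^p$ closes; your weight counts showing that $d_0^{-1}d_2$ vanishes on $E_0^h$ and that the inner $\Pi_E$ is the identity in degrees $\ge n+1$ are correct and actually supply detail the paper leaves implicit. The support statement for $J$ is argued exactly as in the paper (locality of $\Pi_E$, $\Pi_{E_0}$).

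The gap is in (ii). Your plan is to keep the outer $\Pi_E$ and extract compactness from $\Pi_E\circ K_{\mathrm{Euc}}=K_{\mathrm{Euc}}-d_0^{-1}dK_{\mathrm{Euc}}-dd_0^{-1}K_{\mathrm{Euc}}$ by means of the homotopy identity $dK_{\mathrm{Euc}}+K_{\mathrm{Euc}}d=1$. This does not work for the third term: in $d\,d_0^{-1}K_{\mathrm{Euc}}$ the order--zero operator $d_0^{-1}$ sits between $d$ and $K_{\mathrm{Euc}}$, so the identity cannot be applied, and what remains is a genuine first--order operator applied to $K_{\mathrm{Euc}}\omega$ --- bounded on $L^p$ at best, not compact, and Rellich gives you nothing. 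For the middle term the substitution produces $d_0^{-1}K_{\mathrm{Euc}}d\omega$, which again is only bounded (and requires interpreting $d\omega$ distributionally for $\omega\in L^p$). The paper's proof of (ii) avoids all of this by an algebraic, not analytic, observation: on forms of degree $h>n$ one has $K=\Pi_{E_0}K_{\mathrm{Euc}}$ outright --- the inner $\Pi_E$ is the identity on $E_0^h$, $\Pi_{E_0}d_0^{-1}=0$ because $d_0^{-1}$ takes values in $\mathcal{W}$, and the remaining derivative term $\Pi_{E_0}(d_1+d_2)d_0^{-1}$ is annihilated in those degrees --- so that $K$ factors through $K_{\mathrm{Euc}}\colon L^p\to L^p$, whose compactness is Remark 4.1 of Iwaniec--Lutoborski. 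To repair your argument you must prove this cancellation (a further weight/Lefschetz count in degrees $\ge n+1$), not trade derivatives via the homotopy identity; as written, your (ii) establishes boundedness but not compactness.
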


\begin{proof} 
By its very definition, $\Pi_E: W^{1,p}(B, E_0^k) \to
L^p(B, E_0^{k})$ is bounded. By \eqref{Keuc}, $K_{\mathrm{Euc}}$ is continuous
from  $L^p(B, E_0^{k})$ to $W^{1,p}(B, E_0^{k-1})$ and hence, in particular,
from  $L^p(B, E_0^{k})$ to $W^{1,p}(B, E_0^{k-1})$. Then we can conclude
 the proof of i),
 keeping again into account that $\Pi_E$ is a differential operator of order $\le 1$
 in the horizontal derivatives.

To prove ii) it is enough to remind that $K = \Pi_{E_0}K_{\mathrm{Euc}}$ of forms of degree $h>n$, together with 
Remark 4.1 in \cite{IL}.

As for iii), the statement can be proved similarly to i), noticing that $K = \Pi_{E_0}\Pi_E K_{\mathrm{Euc}}$ on forms of degree $n+1$.

Finally, $\mathrm{supp}\, J \omega
\subset B$ since both $\Pi_E$ and $\Pi_{E_0}$ preserve the support.

\end{proof}

The operators $K$ and $J$ provide a local homotopy in Rumin's complex, but fail to yield the
Sobolev and Poincar\'e inequalities we are looking for, since, because of the presence
of the projection operator $\Pi_E$ (that on forms of low degree is a first order differential
operator) they loose regularity as is stated in Lemma \ref{senza nome}, ii) above. In order
to build ``good'' local homotopy operators with the desired gain of regularity, we have
to combine them with  homotopy operators which, though not local, in fact provide the ``good''
gain of regularity.

\begin{proposition}\label{homotopy formulas}  If $\alpha\in \mc D(\he n, E_0^h)$
 for $p>1$ and $h= 1,\dots,2n$, then the following homotopy formulas hold:
\begin{itemize}
\item if $h\neq n, n+1$, then $\alpha = d_c K_1 \alpha + \tilde K_1d_c \alpha $, where $K_1$ and $\tilde K_1$ are  associated with kernels $k_1, \tilde k_1$ of type 1;
\item  if $h = n$, then $\alpha = d_c K_1 \alpha + \tilde K_2 d_c\alpha$, where $K_1$ and $\tilde K_2$ are  associated with kernels $k_1, \tilde k_2$ of type 1 and 2,
respectively;
\item  if $h = n+1 $, then $\alpha = d_c K_2 \alpha + \tilde K_1 d_c\alpha$, where $K_2$ and $\tilde K_1$ are  associated with kernels $k_2, \tilde k_1$ of type 2
and $1$, respectively.
\end{itemize}

\end{proposition}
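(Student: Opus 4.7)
The plan is to invert Rumin's modified Laplacian by convolution with the matrix kernel $\mathcal{K}$ supplied by Theorem \ref{global solution}, then exploit the commutation relations of Lemma \ref{comm} to isolate $d_c\alpha$ on one side. Setting $\beta := \mathcal{K}\alpha = \Delta^{-1}_{\mathbb{H},h}\alpha$, Theorem \ref{global solution}(iii) gives $\Delta_{\mathbb{H},h}\beta = \alpha$. Expanding $\Delta_{\mathbb{H},h}$ according to Definition \ref{rumin laplacian} yields
\begin{equation*}
\alpha = d_c(\delta_c\beta) + \delta_c d_c\beta \qquad (h\neq n,n+1),
\end{equation*}
\begin{equation*}
\alpha = d_c(\delta_c d_c\delta_c\beta) + \delta_c d_c\beta \qquad (h=n),
\end{equation*}
\begin{equation*}
\alpha = d_c(\delta_c\beta) + \delta_c d_c\delta_c d_c\beta \qquad (h=n+1).
\end{equation*}
In each case the first summand is already of the form $d_cK\alpha$, with $K = \delta_c\Delta^{-1}_{\mathbb{H},h}$ in general and $K = \delta_c d_c\delta_c\Delta^{-1}_{\mathbb{H},n}$ in the case $h=n$.

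To deal with the second summand I push $d_c$ through $\Delta^{-1}_{\mathbb{H},h}$ using Lemma \ref{comm}: part (i) when $h\notin\{n-1,n+1\}$, part (ii) when $h=n-1$, and part (iii) when $h=n+1$. What remains to the left of $d_c\alpha$ is the operator $\tilde K$. Explicitly, $\tilde K = \delta_c\Delta^{-1}_{\mathbb{H},h+1}$ in the generic case and for $h=n$; $\tilde K = \delta_c d_c\delta_c\Delta^{-1}_{\mathbb{H},n}$ for $h=n-1$; and $\tilde K = \delta_c\Delta^{-1}_{\mathbb{H},n+2}$ for $h=n+1$.

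To finish, one checks the kernel types using Proposition \ref{kernel}(ii), which says that every horizontal derivative lowers the type by $1$, combined with the fact that $\Delta^{-1}_{\mathbb{H},k}$ is associated with a kernel of type $2$ when $k\neq n,n+1$ and of type $4$ when $k=n,n+1$ (Theorem \ref{global solution}), and with the fact that $d_c$ and $\delta_c$ are order $1$ except between degrees $n$ and $n+1$, where they are of order $2$. For example, when $h=n$, the operator $K = \delta_c d_c\delta_c\Delta^{-1}_{\mathbb{H},n}$ involves three derivations of order $1$ (all between degrees $n-1$ and $n$), yielding a kernel of type $4-3 = 1$; while $\tilde K = \delta_c\Delta^{-1}_{\mathbb{H},n+1}$ acts on $d_c\alpha$ (of degree $n+1$) through $\delta_c$ of order $2$, yielding type $4-2 = 2$. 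All remaining cases are verified in the same way.

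The main obstacle is the case bookkeeping near middle dimension: one has to invoke the correct modified identity in Lemma \ref{comm} for each value of $h$ adjacent to $n$, and verify that the jump of the orders of $d_c$ and $\delta_c$ from $1$ to $2$ at the middle degree is exactly compensated by the fact that $\Delta_{\mathbb{H},n}$ and $\Delta_{\mathbb{H},n+1}$ have order $4$ rather than $2$, so that the resulting kernels land at precisely the advertised types (type $1$ in most cases, type $2$ in the two mixed middle-dimension positions), never lower and never higher.
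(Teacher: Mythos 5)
Your proposal is correct and follows essentially the same route as the paper's proof: write $\alpha=\Delta_{\mathbb H,h}\Delta^{-1}_{\mathbb H,h}\alpha$, expand the modified Laplacian according to Definition \ref{rumin laplacian}, commute $d_c$ past $\Delta^{-1}_{\mathbb H,h}$ via the appropriate clause of Lemma \ref{comm} (with the special identities at $h=n-1,n,n+1$), and read off the kernel types from Theorem \ref{global solution} and Proposition \ref{kernel}. The type bookkeeping in the middle degrees matches the paper's as well, so nothing further is needed.
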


 \begin{proof} 
Suppose $h\neq n-1,n,n+1$. By Lemma \ref{comm}, we have:
\begin{equation*}\begin{split}
\alpha &=  
 \Delta_{\mathbb H,h}  \Delta^{-1}_{\mathbb H,h}\alpha = d_c (\delta_c  \Delta^{-1}_{\mathbb H,h})\alpha
+ \delta_c (d_c  \Delta^{-1}_{\mathbb H,h})\alpha
\\&
=d_c (\delta_c  \Delta^{-1}_{\mathbb H,h})\alpha + 
(\delta_c   \Delta^{-1}_{\mathbb H,h+1})d_c \alpha.
\end{split}\end{equation*}
where $\delta_c  \Delta^{-1}_{\mathbb H,h}$ and $\delta_c   \Delta^{-1}_{\mathbb H,h+1}$ are
 associated with a kernel of type $1$ (by Proposition \ref{kernel}
and Theorem \ref{global solution}).

Analogously, if $h=n-1$
\begin{equation*}\begin{split}
\alpha &=  
 \Delta_{\mathbb H,n-1}  \Delta^{-1}_{\mathbb H,n-1}\alpha = d_c (\delta_c  \Delta^{-1}_{\mathbb H,n-1})\alpha
+ \delta_c (d_c  \Delta^{-1}_{\mathbb H,n-1})\alpha
\\&
=d_c (\delta_c  \Delta^{-1}_{\mathbb H,n-1})\alpha + 
(\delta_c   d_c\delta_c\Delta^{-1}_{\mathbb H, n} )d_c \alpha.
\end{split}\end{equation*}
Again $\delta_c  \Delta^{-1}_{\mathbb H,n-1}$ and $\delta_c   d_c\delta_c\Delta^{-1}_{\mathbb H, n}$
are
 associated with  kernels of type $1$.

Take now $h=n$. Then
\begin{equation*}\begin{split}
\alpha &=   \Delta_{\mathbb H,n}  \Delta^{-1}_{\mathbb H,n}\alpha = (d_c \delta_c )^2 \Delta^{-1}_{\mathbb H,n} \alpha
+ \delta_c (d_c  \Delta^{-1}_{\mathbb H,n})\alpha
\\&
= d_c (\delta_c d_c \delta_c  \Delta^{-1}_{\mathbb H,n})\alpha + 
\delta_c   \Delta^{-1}_{\mathbb H,n+1}d_c \alpha
\end{split}\end{equation*}
where $\delta_c d_c \delta_c \Delta^{-1}_{\mathbb H,n}$ and $\delta_c   \Delta^{-1}_{\mathbb H,n+1}$ are associated with a kernel of type $1$ and $2$,
respectively).

Finally, take  $h=n+1$. Then
\begin{equation*}\begin{split}
\alpha &=   \Delta_{\mathbb H,n+1}  \Delta^{-1}_{\mathbb H,n+1}\alpha = d_c \delta_c \Delta^{-1}_{\mathbb H,n+1} \alpha
+ (\delta_c d_c )^2 \Delta^{-1}_{\mathbb H,n+1} \alpha
\\&
= d_c \delta_c \Delta^{-1}_{\mathbb H,n+1} \alpha + 
\delta_c \Delta^{-1}_{\mathbb H,n+2} d_c\alpha
\end{split}\end{equation*}
where $\delta_c \Delta^{-1}_{\mathbb H,n+1} $ and  $\delta_c \Delta^{-1}_{\mathbb H,n+2}$  associated with  kernels of type $2$ and $1$, respectively.

\end{proof}

The $L^p-L^q$ continuity properties of convolution operators associated with Folland's kernels yields the following
strong $\he{}$-$\mathrm{Poincar\acute{e}}_{p,q}(h)$ inequality in $\he n$ (the strong $\he{}$-$\mathrm{Sobolev}_{p,q}(h)$
is obtained in Corollary \ref{strong sobolev}).

\begin{corollary}\label{strong poincare}
Take $1\le h\le 2n+1$. Suppose  $1<p<Q$ if  $h\not=n+1$ and $1<p<Q/2$ if $h=n+1$. Let $q\ge p$ defined by
\begin{eqnarray}\label{kpq 3}
 \frac{1}{p}-\frac{1}{q} := \begin{cases}
\frac{1}{Q}      & \text{ if }h\not=n+1, \\
\frac{2}{Q}      & \text{ if }h=n+1.
\end{cases}
\end{eqnarray}
Then for any $d_c$-closed form $\alpha\in \mc D(\he n, E_0^h)$ there exists $\phi\in L^q(\he, E_0^{h-1})$
such that $d_c\phi=\alpha$ and 
$$
\|\phi\|_{L^q(\he n, E_0^{h-1})} \le C \|\alpha \|_{L^p(\he n, E_0^{h-1})}
$$
(i.e., the strong $\he{}$-$\mathrm{Poincar\acute{e}}_{p,q}(h)$ 
inequality holds for $1\le h\le 2n+1$).

\end{corollary}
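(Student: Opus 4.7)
The plan is to read the conclusion directly off the homotopy formulas in Proposition \ref{homotopy formulas} combined with the $L^p \to L^q$ estimates for convolution with kernels of positive type recorded in Theorem \ref{hls folland}(i). No further analytic input is needed beyond bookkeeping of the types of the kernels.

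Given a $d_c$-closed form $\alpha \in \mc D(\he n, E_0^h)$, Proposition \ref{homotopy formulas} produces operators $K$ and $\tilde K$ such that $\alpha = d_c K\alpha + \tilde K\, d_c\alpha$. Since $\alpha$ is closed, this reduces to $\alpha = d_c K\alpha$, so the natural candidate is $\phi := K\alpha$, which by construction satisfies $d_c\phi=\alpha$. The operator $K$ is convolution with a kernel of type $1$ when $h\neq n+1$, and with a kernel of type $2$ when $h=n+1$. The endpoint $h=2n+1$, not literally included in the statement of the proposition, is handled by the same Hodge-type computation: since $d_c$ vanishes out of $E_0^{2n+1}$, one has $\alpha = \Delta_{\he{},2n+1}\Delta^{-1}_{\he{},2n+1}\alpha = d_c(\delta_c\Delta^{-1}_{\he{},2n+1})\alpha$, and the kernel is again of type $1$.

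It remains to bound $\|\phi\|_{L^q}$ in terms of $\|\alpha\|_{L^p}$. When $h\neq n+1$, the conditions $1<p<Q$ and $1/p-1/q=1/Q$ are exactly those required by Theorem \ref{hls folland}(i) for a kernel of type $1$, yielding $\|K\alpha\|_{L^q}\leq C\,\|\alpha\|_{L^p}$. When $h=n+1$, the conditions $1<p<Q/2$ and $1/p-1/q=2/Q$ match the same theorem applied to a kernel of type $2$. Assembling these gives the desired estimate $\|\phi\|_{L^q(\he n, E_0^{h-1})} \leq C\,\|\alpha\|_{L^p(\he n, E_0^h)}$.

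There is essentially no obstacle to overcome beyond the correct identification of kernel types. The one conceptually relevant point is why the exponent relation jumps at $h=n+1$: the Rumin Laplacian $\Delta_{\he{},n+1}$ is homogeneous of degree $4$ rather than $2$, while $\delta_c$ acting on forms of degree $n+1$ has homogeneous order $2$ (dual to $d_c$ at middle degree $n$). Consequently, $K = \delta_c\Delta^{-1}_{\he{},n+1}$ is convolution with a kernel of type $4-2=2$ rather than $1$, which forces the loss of one extra unit of integrability in Sobolev-type embedding. Once this is noted, the corollary follows by direct assembly of the two previously proved results.
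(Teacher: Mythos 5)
Your proof is correct and follows exactly the route the paper intends (the corollary is stated without a written proof, the preceding sentence attributing it to the $L^p$--$L^q$ continuity of convolution with Folland's kernels applied to the homotopy formulas of Proposition \ref{homotopy formulas}): since $d_c\alpha=0$, set $\phi=K\alpha$ and invoke Theorem \ref{hls folland}(i) with kernel type $1$ for $h\neq n+1$ and type $2$ for $h=n+1$. Your separate treatment of the top degree $h=2n+1$, which lies outside the stated range $1\le h\le 2n$ of Proposition \ref{homotopy formulas} but is needed for the corollary as stated, correctly fills in a detail the paper leaves implicit.
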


\begin{theorem}\label{smoothing}
Let $B=B(e,1)$ and $B'=B(e,\lambda)$, $\lambda>1$, be concentric balls of $\he{n}$. If $1\le h\le 2n+1$, there exist operators $T$ and $\tilde T$  from 
$C^\infty(B', E_0^\bullet)$  to $C^\infty(B, E_0^{\bullet-1})$ and $S$ from 
$C^\infty(B', E_0^\bullet)$  to $C^\infty(B, E_0^\bullet)$ satisfying
\begin{equation}\label{approx homotopy tilde}
d_c T+ \tilde Td_c + S=I\qquad\mbox{on $B$.} 
\end{equation}

In addition
 \begin{itemize}
 \item[i)] $\tilde T: W^{-1,p}(B',E_0^{h+1}) \to L^p(B,E_0^{h})$ if $h\neq n$, and $\tilde T: W^{-2,p}(B,E_0^{n+1}) \to L^p(B,E_0^{n})$;
 \item[ii)] $T: L^p (B',E_0^{h}) \to W^{1,p}(B,E_0^{h-1})$,   $h\neq n+1$, $ T: T: L^p(B',E_0^{n+1}) \to W^{2,p}(B,E_0^{n})$ if $h=n+1$,
 \item[iii)] $S:L^p(B',E_0^h)\to W^{s,p}(B,E_0^{h})$,
\end{itemize}
so that  \eqref{approx homotopy tilde} still holds in $L^p(B, E_0^\bullet)$.
In addition,
for every $(h,p,q)$ satisfying inequalities 
\begin{eqnarray}\label{kpq}
1 <p\leq q <\infty,\quad \frac{1}{p}-\frac{1}{q}\leq  \begin{cases}
\frac{1}{Q}      & \text{ if }h\not=n+1, \\
\frac{2}{Q}      & \text{ if }h=n+1,
\end{cases}
\end{eqnarray}
we have:
\begin{itemize}
 \item[iv)] $T: L^p (B',E_0^{h}) \to L^{q}(B,E_0^{h-1})$;
 \item[v)] $S:L^p(B',E_0^h)\to W^{s,q}(B,E_0^{h})$;
 \item[vi)] $W^{1,p}(B',E_0^h)\to W^{s,q}(B,E_0^{h- 1})$
for any $s>0$.

\end{itemize}

\end{theorem}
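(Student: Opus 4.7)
The strategy is to localize the global homotopy identity of Proposition \ref{homotopy formulas} by truncating the convolution kernels. That identity reads
\begin{eqnarray*}
\alpha = d_c(\alpha \ast k) + (d_c\alpha) \ast \tilde k \qquad \text{for } \alpha \in \mc D(\he n, E_0^h),
\end{eqnarray*}
with $k$, $\tilde k$ kernels of type $1$ or $2$ depending on $h$. I would pick $\psi \in \mc D(\he n)$ with $\psi \equiv 1$ near $e$ and with $\supp \psi$ small enough that $x \cdot (\supp \psi)^{-1} \subset B'$ for every $x \in B$; this is possible because $\lambda > 1$. Then split $k = \psi k + (1-\psi)k$ and similarly $\tilde k = \psi \tilde k + (1-\psi)\tilde k$. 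The truncated pieces $\psi k$, $\psi \tilde k$ are compactly supported kernels of the same type as $k$, $\tilde k$ by Lemma \ref{truncation} and Remark \ref{truncation rem}, whereas the complementary pieces together with all their horizontal derivatives are smooth everywhere on $\he n$ (the singularity at $e$ has been cut away by $1-\psi$), with polynomial decay at infinity controlled by the type of the original kernel.

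Next I would define, on $C^\infty(B', E_0^\bullet)$,
\begin{eqnarray*}
T\alpha := \alpha \ast \psi k,\quad \tilde T\alpha := \alpha \ast \psi \tilde k,\quad S\alpha := \alpha - d_c T\alpha - \tilde T d_c\alpha .
\end{eqnarray*}
The support condition on $\psi$ ensures $T\alpha(x)$ and $\tilde T\alpha(x)$ are determined, for $x \in B$, by values of $\alpha$ inside $B'$, so the definitions are legitimate. The identity $d_c T + \tilde T d_c + S = I$ on $B$ holds by construction. To identify $S$, multiply $\alpha$ by a cutoff $\chi \in \mc D(B')$ equal to $1$ on a neighborhood of $B$ large enough that $T(\chi\alpha)$, $\tilde T(\chi\alpha)$, $d_c(\chi\alpha)$ coincide with $T\alpha$, $\tilde T\alpha$, $d_c\alpha$ on $B$; applying the global homotopy identity to $\chi\alpha \in \mc D(\he n, E_0^h)$ gives, on $B$,
\begin{eqnarray*}
S\alpha = \alpha \ast d_c\bigl((1-\psi) k\bigr) + (d_c\alpha) \ast \bigl((1-\psi)\tilde k\bigr),
\end{eqnarray*}
a convolution against globally $C^\infty$ kernels. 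Applied to forms supported (after cutoff) in $B'$, every horizontal derivative can be absorbed onto the smooth kernel, so $S$ gains arbitrary regularity in the horizontal variables, proving iii), v) and vi).

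For boundedness of $T$ and $\tilde T$, I would invoke Lemma \ref{truncation}, Proposition \ref{kernel}(ii) and Theorem \ref{hls folland}. The compactly supported kernel $\psi k$ has type $1$ generically and type $2$ when $h = n+1$, so convolution with $\psi k$ is bounded $L^p(B') \to L^q(B)$ for $\tfrac1p - \tfrac1q \leq \tfrac1Q$ (respectively $\tfrac2Q$), giving iv); horizontal derivatives of $\psi k$ are kernels of type $0$ (respectively $1$), bounded on $L^p$, so $T$ gains $1$ (respectively $2$) horizontal derivatives in $L^p$, giving ii). Symmetrically $\psi \tilde k$ has type $1$ generically and type $2$ when $h = n$; the bounds on $\tilde T$ follow by duality against the pairing $(\alpha, \beta) \mapsto \int \alpha \wedge \beta$, which exchanges $W^{\mu, p}$ and $W^{-\mu, p'}$, yielding i).

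The main technical obstacle is the bookkeeping at the middle dimensions. When $h = n$, the fact that $d_c \colon E_0^n \to E_0^{n+1}$ is of order $2$ means that for $\alpha \in L^p$ one only has $d_c \alpha \in W^{-2, p}$, so the bound on $\tilde T$ must be $W^{-2, p} \to L^p$; this is precisely what is afforded by $\psi \tilde k$ being of type $2$ at $h = n$. The analogous compensation operates for $T$ when $h = n+1$. Verifying that the various orders of differentiation, kernel types, and Sobolev exponents mesh compatibly through the three regimes $h \notin \{n, n+1\}$, $h = n$, $h = n+1$, and that the order $2$ jump of $d_c$ at the middle dimension is exactly reabsorbed by the type $2$ kernels, is where most of the work concentrates.
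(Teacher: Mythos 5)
Your proposal follows essentially the same route as the paper: truncate the kernels $k$, $\tilde k$ of Proposition \ref{homotopy formulas} near the origin with a cut-off, define $T$ and $\tilde T$ by convolution with the compactly supported pieces (using a cut-off $\chi$ on the form to localize to $B'$), identify $S$ as convolution with the smooth far-field pieces, and derive i)--vi) from Proposition \ref{kernel}, Theorem \ref{hls folland}, Lemma \ref{truncation} and a duality argument for the negative-order Sobolev bounds, with the type-$2$ kernels absorbing the order-$2$ jump of $d_c$ at the middle degrees exactly as in the paper. The argument is correct and only differs in inessential bookkeeping (e.g.\ you put horizontal derivatives on the kernel where the paper moves them onto the form and dualizes).
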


\begin{proof} Suppose first $h\neq n,n+1$. We consider a cut-off function $\psi_R$ supported in a $R$-neighborhood
of the origin, such that $\psi_R\equiv 1$ near the origin. With the notations of Proposition \ref{homotopy formulas}, we can write  
$k_1=k_1\psi_R + (1-\psi_R)k_1$ and $\tilde k_1=\tilde k_1\psi_R + (1-\psi_R)\tilde k_1$. 
Let us denote by $K_{1,R}$, $\tilde K_{1,R}$ the convolution operators associated with
$\psi_R k_1$, $\psi_R \tilde k_1$, respectively.
Le us fix two balls $B_0$, $B_1$ with 
\begin{equation}\label{varie B}
B\Subset B_0 \Subset B_1\Subset B',
\end{equation}
and a cut-off function $\chi \in \mc D(B_1)$,
 $\chi\equiv 1$ on $B_0$. If $\alpha \in C^\infty(B', E_0^\bullet)$, we set $\alpha_0=  \chi\alpha$, continued by zero outside $B_1$.

Keeping in mind \eqref{convolution by parts} and Proposition \ref{kernel}, we have
\begin{equation}\label{sept 9 eq:1}
\alpha_0 = d_c K_{1,R} \alpha_0 + \tilde K_{1,R}d_c \alpha_0 + S_0\alpha_0,
\end{equation}
where $S_0$ is 
$$
S_0\alpha_0 :=  d_c( (1-\psi_R)k_1\ast \alpha_0) + (1-\psi_R)\tilde k_1 \ast d_c\alpha_0.
$$ 
 We set
$$
T\alpha := K_{1,R} \alpha_0, \qquad \tilde T\alpha:=  \tilde K_{1,R}d_c \alpha_0, \qquad S\alpha:=  S_0\alpha_0.
$$
We notice that, provided $R>0$ is small enough, the definition of $T$ and $\tilde T$
 does not depend on the continuation of $\alpha$  outside $B_0$.
By \eqref{sept 9 eq:1} we have 
 $$
\alpha = d_c T\alpha + \tilde Td_c \alpha + S\alpha \qquad\mbox{in $B$}.
$$
If $h=n$ we can carry out the same construction, replacing $\tilde k_1$ by $\tilde k_2$ (keep in mind that $\tilde k_2$
is a kernel of type 2). Analogously, if $h=n+1$ we can carry out the same construction, replacing $k_1$ by $ k_2$ (again
a kernel of type 2). 

Let us prove i). Suppose $h\neq n$, and take $\beta\in W^{-1,p}(B', E_0^{h})$. The operator
 $\tilde K_{1,R}$ is associated with a matrix-valued kernel $\psi_R ( \tilde{k}_1)_{\ell,\lambda}$ and
 $\beta$ is identified with a vector-valued distribution $(\beta_1, \dots, \beta_{N_h})$,
 with $\beta_j = \sum_iW_if^j_i$ as in Definition \ref{dual spaces forms} with
 $$
 \sum_j \sum_i \| f_i^j\|_{L^p(B')} \le C \| \beta \|_{W^{-1,p}(B', E_0^h)}.
 $$
 Thus $(\beta_0)_j$, the $j$-th component of $\beta_0 = \chi\beta$ has the form
 $$
 (\beta_0)_j = \sum_iW_i(\chi f_i^j) - \sum_i (W_i\chi) f^j_i=: \sum_iW_i(f^j_i)_0 - \sum_i (W_i\chi) f^j_i.
 $$

 In order to estimate the  norm of $\tilde T \beta$ in $L^p(B, E_0^h)$, we take
 $$
 \phi = \sum_j \phi_j  \xi_j^h \in \mc D(B, E_0^h),\qquad\mbox{with\qquad  $\sum_j\| \phi_j \|_{L^{p'}(B')} \le 1$,}
 $$
 and we estimate $\Scal{T\beta}{\phi}$, that, 
 by \eqref{dual spaces forms eq:1}, is a sum of  terms of the form
\begin{equation}\label{caso 1}
 \int_{B} ( \psi_R \kappa\ast f_0) (x) W_i\phi(x)\, dx
 = \Scal{\psi_R \kappa\ast W_i f_0}{\phi}
\end{equation}
 or of the form 
\begin{equation}\label{caso 2}
 \int_{B} ( \psi_R \kappa\ast (W_i\chi) f) (x) \phi(x)\, dx,
\end{equation}
where $\kappa$ denotes one of the kernels $( \tilde{k}_1)_{\ell,\lambda}$ of type 1 associated with $\tilde{k}_1$, $f$ is one of the $f_i^j$'s and $\phi$ one of the $\phi_j$'s,

As for \eqref{caso 1}, by \eqref{convolution by parts}, 
\begin{equation*}\begin{split}
&\Scal{\psi_R \kappa\ast  W_i f_0}{\phi} = \Scal{ \ccheck W^I\,\ccheck [\psi_R \kappa]\ast  f_0}{\phi}
\\& \hphantom{xxxxx} =
\Scal{\psi_R \ccheck W^I\,\ccheck \kappa\ast  f_0}{\phi} - \Scal{( \ccheck W^I\,\ccheck\psi_R) \kappa\ast  f_0}{\phi}
\end{split}\end{equation*}
We notice now that $ \ccheck W^I\,\ccheck \kappa$ is a kernel of type 0.  Therefore, by Lemma \ref{truncation}
\begin{equation*}\begin{split}
& \Scal{\psi_R \ccheck W^I\,\ccheck \kappa\ast  f_0}{\phi} 
\le \| \psi_R \ccheck W^I\,\ccheck \kappa\ast  f_0\|_{L^p(B)} \|\phi\|_{L^{p'}(B)}
\\& \hphantom{xxxxx} \le 
\| \psi_R \ccheck W^I\,\ccheck \kappa\ast  f_0\|_{L^p(B)}
\le
C \|  f_0\|_{L^p(B')}
\\&\hphantom{xxxxx}
 \le C \| \beta \|_{W^{-1,p}(B', E_0^h)}.
\end{split}\end{equation*}
 The term in \eqref{caso 2} can be handled in the same way, keeping into account Remark \eqref{truncation rem}.
 Eventually, combining \eqref{caso 1} and \eqref{caso 2} we obtain that
 $$
 \|\tilde T\beta\|_{L^p(B)} \le C  \| \beta \|_{W^{-1,p}(B', E_0^h)}.
 $$
 
 The assertion for $h=n$ can be proved in the same way, taking into account that $\tilde T$ is built from a kernel of
 type 2, and that the space $W^{-2,p}(B,E_0^{n+1})$ is characterized by ``second order divergences''.
 
 Let us prove now ii). Suppose $h\neq n+1$ and take $\alpha = \sum_j \alpha_j  \xi_j^h \in \mc D(B', E_0^h)$. Arguing as above, in
 order to estimate $\| T\alpha\|_{W^{1,p}(B,E_0^{h-1})}$ we have to consider terms of the form
 \begin{equation}\label{caso 3}
W_\ell(\psi_R \kappa \ast (\chi\alpha_j) ) = \psi_R \kappa \ast (W_\ell(\chi\alpha_j) )
\end{equation}
(when we want to estimate the the $L^p$-norm of the horizontal derivatives of $T\alpha$),
or of the form
 \begin{equation}\label{caso 4}
\psi_R \kappa \ast (\chi\alpha_j) 
\end{equation}
(when we want to estimate the $L^p$-norm of $T\alpha$).
Both \eqref{caso 3} and \eqref{caso 4} can be handled as in the case i) (no need here of the
duality argument).

We point out that \eqref{caso 4} yields a $L^p-L^q$ estimates (since, unlike \eqref{caso 3}, involves only
kernels of type 1) and then assertion iv) follows.

Let us prove v). Then also iii) will follow straightforwardly.

 It is easy to check that $S_0$ can be written as a convolution operator with matrix-valued kernel $s_0$.
In turn, each entry of $s_0$ (that we still denote by $s_0$) is a sum of terms of the form
$$
(1-\psi_R) W_\ell \kappa - (W_\ell \psi_R)\kappa.
$$
Thus, the kernels are smooth and then regularizing from $\mc E'(B')$ to $C^\infty$ of a neighborhood of $B$.
Thus
$$
\| W^I s_0\ast \alpha_j\|_{L^{q}(B)}
\le C\|\alpha_j\|_{L^{p}(B)},
$$
for all $p,q$.

\end{proof}

\begin{remark}\label{tenda} Apparently, in previous theorem, two different homotopy operators $T$ and $\tilde T$
appear. In fact, they coincide when acting on form of the same degree.

More precisely, in Proposition \ref{homotopy formulas} the homotopy formulas involve four operators
$K_1, \tilde K_1, K_2, \tilde K_2$, where the notation is meant to distinguish operators acting on $d_c\alpha$ (the operators
with tilde) from those on which the differential acts (the operators
without tilde), whereas the lower index 1 or 2 denotes the type of the associated kernels. Alternatively, a different notation could be used:
if $\alpha\in \mc D(\he n, E_0^h)$ we can write 
$$
\alpha = d_c K_h + \tilde K_{h+1}d_c\alpha,
$$
where the tilde has the same previous meaning, whereas the lower index refers now to the degree of the forms on which
the operator acts.

It is important to notice that
$$
K_{h+1} = \tilde K_{h+1}, \qquad h=1,\dots, 2n.
$$
Indeed, take $h<n-1$. Then $\tilde K_{h+1} = \delta_c \Delta^{-1}_{\mathbb H,h+1}$ (as it appears
in the homotopy formula at the degree $h$), that equals $K_{h+1}$ (as it appears
in the homotopy formula at the degree $h+1\le n-1$). Take now $h=n-1$. Then
$\tilde K_{n} = \delta_c d_c\delta_c\Delta^{-1}_{\mathbb H,n}$ (as it appears
in the homotopy formula at the degree $n$), that equals $K_{n}$ (as it appears
in the homotopy formula at the degree $n$). If $h=n$, then 
$\tilde K_{n+1} = \delta_c \Delta^{-1}_{\mathbb H,n+1}$ (as it appears
in the homotopy formula at the degree $n$), that equals $K_{n+1}$ (as it appears
in the homotopy formula at the degree $n+1$). Finally, if $h>n$, then 
$\tilde K_{h+1} = \delta_c \Delta^{-1}_{\mathbb H,h+1}$ (as it appears
in the homotopy formula at the degree $h$), that equals $K_{h+1}$ (as it appears
in the homotopy formula at the degree $h+1$).

\medskip

Once this point is established, from now on we shall write 
$$
K:= K_{h} =\tilde K_{h}
$$
 without ambiguity. 
 
 \medskip
 
 Therefore $T=\tilde T$ and the homotopy formula \eqref{approx homotopy tilde}
reads as
\begin{equation}\label{approx homotopy}
d_c T+  Td_c + S=I\qquad\mbox{on $B$.} 
\end{equation}
\end{remark}

\medskip

\begin{remark}\label{smoothing negative} By the arguments used in the proof of Theorem \ref{smoothing}, i) the proof
of the $L^p - W^{s,q}$ continuity of $S$ can be adapted to prove that
$S$ is a smoothing operator, i.e for any $m, s\in \mathbb N\cup\{0\}$,
$S$ is bounded from $W^{-m,p}(B',E_0^\bullet)$ to $W^{s,q}(B,E_0^\bullet)$ when \eqref{kpq} holds.
In particular,
 if $\alpha\in W^{-m,p}(B',E_0^\bullet)$
then $S \alpha\in C^\infty(B, E_0^\bullet)$.
\end{remark}

\begin{remark}\label{locality} It is worth pointing out the following fact: take $\alpha, \beta
\in L^p(B',E_0^\bullet)$, $\alpha\equiv \beta$  on $B_1$ ($B_1$ has been introduced in \eqref{varie B}).  Then
$\alpha_0 \equiv \beta_0$  in $B_0$, so that $K_{1,R}\alpha_0 \equiv  K_{1,R} \beta_0$  and
$\tilde K_{1,R} d_c\alpha_0 \equiv  \tilde K_{1,R} d_c \tilde \beta_0$ in $B$. In other words,
$(d_c T +   Td_c)\alpha = (d_c T +   Td_c) \beta$ in $B$. 
Thus, by \eqref{approx homotopy}, $S\alpha = S\beta$ in $B$.

\end{remark}

%

The following commutation lemma will be helpful in the sequel.

\begin{lemma}\label{S-commuta-d} We have:
 $$
 [S, d_c]=0\qquad  \mbox{ in $L^p(\he n, E_0^\bullet)$.}
 $$
\end{lemma}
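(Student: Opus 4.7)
My plan is to derive the commutation $[S, d_c]=0$ directly from the homotopy identity established in Remark \ref{tenda} (equation \eqref{approx homotopy}),
\begin{equation*}
I = d_c T + T d_c + S,
\end{equation*}
together with the fundamental property $d_c^2 = 0$ recorded in item i) after the contact-invariance discussion in Section \ref{Rumin}. The crucial preliminary is Remark \ref{tenda}: without the identification $K_h = \tilde K_h$ (equivalently $T = \tilde T$) the homotopy formula would produce two different operators on the right and the argument would fail; this is precisely the reason Baldi--Franchi--Pansu state that remark before the present lemma.

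Given $I = d_c T + T d_c + S$, I will compose with $d_c$ on each side. Composing on the left:
\begin{equation*}
d_c = d_c^2\, T + d_c T d_c + d_c S = d_c T d_c + d_c S,
\end{equation*}
using $d_c^2 = 0$. Composing on the right:
\begin{equation*}
d_c = d_c T d_c + T d_c^2 + S d_c = d_c T d_c + S d_c,
\end{equation*}
again by $d_c^2 = 0$. Subtracting the two displays gives $d_c S - S d_c = 0$, which is the claimed identity.

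The only point that requires care is the domain on which the computation is valid. The homotopy \eqref{approx homotopy} is obtained in Theorem \ref{smoothing} under the cut-off scheme with balls $B \Subset B_0 \Subset B_1 \Subset B'$, so a priori the identity is an identity of operators $L^p(B',E_0^\bullet) \to L^p(B,E_0^\bullet)$. However, since $d_c$ is a left-invariant differential operator commuting with convolution and with restriction to appropriate sub-balls, the composition manipulations above make sense on forms that are smooth on a slightly enlarged ball. By the density of $\mathcal{D}(\he n, E_0^\bullet)$ in $L^p$ (invoking Remark \ref{smoothing negative} for the $L^p \to W^{s,q}$ boundedness of $S$ and the continuity properties of $T$ listed in Theorem \ref{smoothing}), the identity $d_c S = S d_c$ extends to all of $L^p(\he n, E_0^\bullet)$.

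I expect the main obstacle to be purely bookkeeping: verifying that the compositions $d_c T d_c$, $d_c S$, and $S d_c$ are well-defined between the relevant Sobolev spaces so that the cancellations occur as operator identities rather than merely formal ones. This is handled by the continuity statements i)--vi) of Theorem \ref{smoothing} combined with the fact that $d_c$ raises Sobolev order by the appropriate number of derivatives (1 if $h \neq n$, 2 if $h = n$) which is exactly compensated by the smoothing provided by $T$ in those degrees.
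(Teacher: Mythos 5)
Your proof is correct and follows essentially the same route as the paper: the authors likewise obtain $Sd_c=d_cS$ on smooth forms directly from the homotopy formula \eqref{approx homotopy} together with $d_c^2=0$, and then pass to $L^p$ by approximation. The only detail the paper spells out that you compress is the use of a cut-off $\chi_1$ and Remark \ref{locality} to guarantee that replacing $\alpha$ by mollifications of $\chi_1\alpha$ does not change $S\alpha$ or $Sd_c\alpha$ on the inner ball $B$, with the negative-order boundedness of $S$ (Remark \ref{smoothing negative}) controlling the convergence of $Sd_c\alpha_k$ — exactly the ingredients you cite.
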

\begin{proof} Take first $\alpha\in C^\infty(B', E_0^h)$, $1\le h\le 2n+1$. By \eqref{approx homotopy},
$Sd_c=d_cS$ on $ \mc D(B', E_0^h)$.

Take now $\alpha \in L^p (B', E_0^h)$,
and let $\chi_1$ be a cut-off function supported in $B'$, $\chi_1\equiv 1$ on $B_1$
($B_1$ has been defined in \eqref{varie B}). By convolution with usual Friedrichs' mollifiers,
we can find a sequence $(\alpha_k)_{k\in \mathbb N}$ in $\mc D(B', E_0^h)$
converging to $\chi_1\alpha$ in $L^p (B', E_0^h)$. By Theorem \ref{smoothing},
$S\alpha_k \to S(\chi_1\alpha)$ in $W^{2,p}(B,E_0^{h+1})$, and hence $d_c S\alpha_k \to d_cS(\chi_ \alpha)$
in $L^p(B, E_0^h)$ as $k\to\infty$ (obviously, if $h\neq n-1$, it would have been enough
to have $S\alpha_k \to S(\chi_1\alpha)$ in $W^{1,p}(B,E_0^{h+1})$). On the other hand,
$\chi_1\alpha \equiv \alpha$ in $B_1$, and then by Remark \ref{locality} $S(\chi_1\alpha)
= S\alpha$ in $B$, so that $d_c S\alpha_k \to d_cS \alpha$
in $L^p(B, E_0^h)$ as $k\to\infty$.
Moreover
$d_c\alpha_k\to d_c(\chi_1\alpha)$ in $W^{-1,p}(B',E_0^{h})$ (in $W^{-2,p}(B',E_0^{h})$
if $h=n$) and hence, again by Theorem \ref{smoothing}, $Sd_c\alpha_k\to Sd_c(\chi_1\alpha)$
in $B$ as $n\to\infty$. Again $d_c(\chi_1\alpha)\equiv d_c\alpha$ in $B_1$ and then, by
Remark \ref{locality}, $Sd_c\alpha_k\to Sd_c\alpha$
in $B$ as $k\to\infty$. 

Finally, since $d_cS\alpha_k = Sd_c\alpha_k$ for all $k\in \mathbb N$, we can take the
limits as $k\to\infty$ and the assertion follows.
\end{proof}

\begin{theorem}\label{pq poincare}
Let $p,q,h$ s in \eqref{kpq}. With the notations of Definitions \ref{poincare def}, if $1<p<\infty$,  
then both an interior $\he{}$-$\mathrm{Poincar\acute{e}}_{p,q}(h)$ 
and an interior $\he{}$-$\mathrm{Sobolev}_{p,q}(h)$ inequalities hold for $1\le h\le 2n$.
\end{theorem}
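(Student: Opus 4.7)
\medskip

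\noindent\textbf{Proof plan.} The strategy is to combine the approximate homotopy of Theorem \ref{smoothing} with the Euclidean-derived local homotopies $K$, $J$ introduced in \eqref{may 4 eq:2}, \eqref{may 31 eq:2}. For the interior Poincar\'e inequality, fix $\omega \in L^p(B',E_0^h)$ with $d_c\omega=0$ and apply Theorem \ref{smoothing} to get
\begin{eqnarray*}
\omega = d_c T\omega + Td_c\omega + S\omega = d_c T\omega + S\omega \qquad \text{on } B.
\end{eqnarray*}
By Theorem \ref{smoothing}~v) (and Remark \ref{smoothing negative}) the form $S\omega$ is smooth on $B$, and by Lemma \ref{S-commuta-d} it is $d_c$-closed since $d_c S\omega = S d_c\omega = 0$. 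Hence Lemma \ref{homotopy 1} applies: $S\omega = d_c K S\omega$ on $B$, and setting $\phi := T\omega + KS\omega$ yields $d_c\phi = \omega$ on $B$. The bound then splits into two pieces: $\|T\omega\|_{L^q(B)}\leq C\|\omega\|_{L^p(B')}$ by Theorem \ref{smoothing}~iv), while $\|S\omega\|_{W^{1,q}(B)}\leq C\|\omega\|_{L^p(B')}$ by Theorem \ref{smoothing}~v) (with $s=1$), and finally $\|KS\omega\|_{L^q(B)}\leq C\|S\omega\|_{W^{1,q}(B)}$ by Lemma \ref{senza nome}~i) (applied with $p$ replaced by $q$). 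In the special degree $h=n+1$, Lemma \ref{senza nome}~iii) is used instead (no derivative loss), which is harmless since $S\omega\in L^q$ already.

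\medskip

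For the interior Sobolev inequality, take a smooth, compactly supported, $d_c$-closed form $\omega$ in $B$ and repeat the construction with $J$ in place of $K$. Choose auxiliary balls $B\Subset B_0\Subset B_1 \Subset B'$ and use Theorem \ref{smoothing} with $(B_1,B')$ playing the role of the pair $(B,B')$ of that theorem; extend $\omega$ by zero outside its support. Since the cutoff $\chi$ in the construction of $T$ can be chosen identically $1$ on $\mathrm{supp}(\omega)$, and the kernel $\psi_R k_1$ (or $\psi_R k_2$ for $h=n+1$) is supported in a small ball around $e$, the form $T\omega$ has support contained in a slight enlargement of $\mathrm{supp}(\omega)$, hence inside $B'$ for $R$ small enough. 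The form $S\omega$ is smooth on $B_1$ and $d_c$-closed there (again by Lemma \ref{S-commuta-d}), so by the support-preserving part of Lemma \ref{homotopy 1} and Lemma \ref{senza nome} the primitive $JS\omega$ is compactly supported in $B_1\subset B'$ and satisfies $d_c JS\omega = S\omega$ on $B_1$. Therefore $\phi := T\omega + JS\omega$, extended by zero to $B'$, is compactly supported in $B'$ and solves $d_c\phi = \omega$ on $B'$. The $L^p\to L^q$ bound follows exactly as in the Poincar\'e case, replacing $K$ with $J$ and using the mapping properties of $J$ from Lemma \ref{senza nome}.

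\medskip

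The main technical point is the bookkeeping in degree $h=n+1$, where $d_c$ has order $2$. Here Theorem \ref{smoothing}~ii) provides that $T$ gains $2$ derivatives (mapping $L^p(B',E_0^{n+1})\to W^{2,p}(B,E_0^n)$), which under the assumption $\frac{1}{p}-\frac{1}{q}\leq\frac{2}{Q}$ suffices via Folland's embedding to give the desired $L^p\to L^q$ bound iv) of that theorem; correspondingly $\tilde T$ absorbs a distribution in $W^{-2,p}$. The second delicate point, arising only for Sobolev, is ensuring that $T\omega$ (and hence $\phi$) is genuinely compactly supported in $B'$: this is what forces the nested choice of balls $B\Subset B_0\Subset B_1\Subset B'$ and the small truncation radius $R$, but once these are arranged (compatibly with the fixed $\lambda$), the argument proceeds uniformly in $\omega$. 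Beyond these, every estimate is a direct composition of bounds already established in Theorems \ref{smoothing} and \ref{global solution}, Lemmas \ref{senza nome}, \ref{homotopy 1}, \ref{S-commuta-d}, so no new analytic ingredient is required.
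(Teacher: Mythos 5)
Your proposal is correct and follows essentially the same route as the paper's proof: the decomposition $\phi=(KS+T)\omega$ (resp.\ $(JS+T)\omega$ for the compactly supported case), with smoothness and $d_c$-closedness of $S\omega$ supplied by Remark \ref{smoothing negative} and Lemma \ref{S-commuta-d}, exactness via Lemma \ref{homotopy 1}, and the estimates assembled from Theorem \ref{smoothing} and Lemma \ref{senza nome}. Your extra remarks on the degree $h=n+1$ bookkeeping and on the support of $T\omega$ only make explicit what the paper treats implicitly.
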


\begin{proof} 

\noindent $\he{}$-$\mathrm{Poincar\acute{e}}_{p,q}(h)$ inequality: let $\omega \in L^p(B',E_0^h)$ be such
that $d_c\omega =0$. 

By  \eqref{approx homotopy} we can write $\omega = d_cT\omega + S\omega$ in $B$.  By Remark 
\ref{smoothing negative} and Lemma \ref{S-commuta-d}, $S\omega\in 
\mc E(B,E_0^h)$, and $d_cS\omega =0$. Thus we can apply \eqref{homotopy closed} to $S\omega$
and we get $S\omega = d_cKS\omega$, where $K$ is defined in \eqref{may 4 eq:2}.
In $B$, put now 
$$
\phi:= (KS+T)\omega.
$$ Trivially $d_c\phi = d_cKS\omega + d_cT\omega = S\omega +
\omega - S\omega = \omega$. 
By Theorem \ref{smoothing},
\begin{equation}\label{pq estimates}\begin{split}
\|\phi & \|_{L^q(B,E_0^{h-1})}  \le \|KS\omega\|_{L^q(B,E_0^{h-1})}+ \| T\omega\|_{L^q(B,E_0^{h-1})} 
\\&
\le  \|KS\omega\|_{L^q(B,E_0^{h-1})}+C \|   \omega\|_{L^p(B',E_0^{h-1})}
\\&
\le C\{  \|S\omega\|_{W^{1,q}(B,E_0^{h-1})}+ \|   \omega\|_{L^p(B',E_0^{h-1})}
\} \qquad\mbox{(by Lemma \ref{senza nome})}
\\&
\le C \|   \omega\|_{L^p(B',E_0^{h-1})}.
\end{split}\end{equation}

\noindent $\he{}$-$\mathrm{Sobolev}_{p,q}(h)$ inequality: let $\omega \in L^p(B,E_0^h)$ be 
a compactly supported form such
that $d_c\omega =0$. Since $\omega$ vanishes in a neighborhood of $\partial B$,
without loss of generality we can assume that it is continued by zero on $B'$. In addition,
$\omega=\chi\omega$. By \eqref{approx homotopy}
we have $\omega = d_cT\omega + S\omega$. On the other hand, $T\omega$ is supported
in $B_0$ (since $R$ is small), so that also $S\omega$ is supported
in $B_0$. Again as above $S\omega\in 
C^\infty(B,E_0^h)$, and $d_cS\omega =0$. Thus we can apply \eqref{homotopy closed} to $S\omega$
and we get $S\omega = d_cJS\omega$, where $J$ is defined in \eqref{may 31 eq:2}. By Lemma
\ref{homotopy 1}, $JS\omega$ is supported in $B_0\subset B'$. Thus, if we set
$\phi:= (JS+T)\omega$, then $\phi$ is supported in $B'$. Moreover $d_c\phi = d_cKS\omega + d_cT\omega = S\omega +
\omega - S\omega = \omega$. At this point, we can repeat the estimates \eqref{pq estimates}
and we get eventually
$$
\|\phi  \|_{L^q(B',E_0^{h-1})}  \le C \|   \omega\|_{L^p(B,E_0^{h-1})}.
$$
This completes the proof of the theorem.
\end{proof}

Let $B(p,r)$ a Kor\'anyi ball of center $p\in \he n$ and radius $r>0$. The map $x\to f(x):=  \tau_{p} \delta_{r} (x)$
provides a contact diffeomorphism from $B(e,\rho)$ to $B(p,r\rho)$ for $\rho>0$. Therefore the pull-back $f^\#: E_0^\bullet\to E_0^\bullet$. In addition, if
$\alpha\in E_0^h$, then 
$$
f^\# \alpha = r^h \alpha\circ f\qquad\mbox{if $h\le n$}\qquad\mbox{and}\qquad f^\# \alpha = r^{h+1} \alpha\circ f\qquad\mbox{if $h> n$.}
$$
\begin{theorem} Take $1\le h\le 2n+1$. Suppose  $1<p<Q$ if  $h\not=n+1$ and $1<p<Q/2$ if $h=n+1$. Let $q\ge p$ such that
\begin{eqnarray}\label{kpq 2}
 \frac{1}{p}-\frac{1}{q}\leq \begin{cases}
\frac{1}{Q}      & \text{ if }h\not=n+1, \\
\frac{2}{Q}      & \text{ if }h=n+1.
\end{cases}
\end{eqnarray}
Then
there exists a constant $C$ such that,
 for every $d_c$-closed differential $h$-form $\omega$ in $L^p(B(p,\lambda r);E_0^h)$ there exists a $h-1$-form $\phi$ in $L^q(B(p,r),E_0^{h-1})$ such that $d_c\phi=\omega$ and
\begin{equation*}
\|\phi\|_{L^{q}(B(p,r),E_0^{h-1})}\leq C\, r^{Q/q - Q/p + 1}\,\|\omega\|_{L^{p}(B(p,\lambda r),E_0^h)}\qquad\mbox{if $h\neq n+1$}
\end{equation*}
and
\begin{equation*}
\|\phi\|_{L^{q}(B(p,r),E_0^{n})}\leq C\, r^{Q/q - Q/p + 2}\,\|\omega\|_{L^{p}(B(p,\lambda r),E_0^{n+1}) }.
\end{equation*}
Analogously there exists a constant $C$ such that,
 for every compactly supported $d_c$-closed  $h$-form $\omega$ in $L^p(B(p,r);E_0^h)$ there exists a 
compactly supported $(h-1)$-form $\phi$ in $L^q(B(p,\lambda r),E_0^{h-1})$ such that $d_c\phi=\omega$ in $B(p,\lambda r)$
and
\begin{equation}\label{sobolev interior ball}
\|\phi\|_{L^{q}(B(p,\lambda r),E_0^{k-1})}\leq C\,\|\omega\|_{L^{p}(B(p,r),E_0^k)} \quad 
\end{equation}

\end{theorem}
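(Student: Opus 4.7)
The plan is to reduce the rescaled statement to the unit-ball version (Theorem~\ref{pq poincare}) via the contact diffeomorphism $f(x) = \tau_p \circ \delta_r(x) = p\cdot\delta_r(x)$, which the preamble to the theorem identifies as a contact map $B(e,\rho)\to B(p,r\rho)$ for every $\rho>0$. Since $f$ is contact, its pull-back $f^\sharp$ preserves the Rumin bundles $E_0^\bullet$ and commutes with $d_c$. Given a $d_c$-closed form $\omega\in L^p(B(p,\lambda r),E_0^h)$, I set $\tilde\omega := f^\sharp\omega$ on $B(e,\lambda)$; it is $d_c$-closed, so Theorem~\ref{pq poincare} produces $\tilde\phi\in L^q(B(e,1),E_0^{h-1})$ with $d_c\tilde\phi=\tilde\omega$ and $\|\tilde\phi\|_{L^q(B(e,1))}\le C\,\|\tilde\omega\|_{L^p(B(e,\lambda))}$. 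Then I define $\phi := (f^{-1})^\sharp\tilde\phi$ on $B(p,r)$; the commutation $d_c\circ(f^{-1})^\sharp=(f^{-1})^\sharp\circ d_c$ gives $d_c\phi=\omega$ on $B(p,r)$.

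The substance of the argument is the scaling computation for the two norms, using the formula $f^\sharp\alpha = r^{w(h)}\alpha\circ f$ recalled just before the theorem, where $w(h)=h$ for $h\le n$ and $w(h)=h+1$ for $h>n$, together with $d(\delta_r x) = r^Q dx$. Performing the change of variables $y=f(x)$ on $B(e,\lambda)$ gives
\begin{equation*}
\|f^\sharp\omega\|_{L^p(B(e,\lambda))} = r^{w(h)-Q/p}\,\|\omega\|_{L^p(B(p,\lambda r))},
\end{equation*}
and on $B(p,r)$, solving $f^\sharp\phi = \tilde\phi$ for $\phi$ yields
\begin{equation*}
\|\phi\|_{L^q(B(p,r))} = r^{-w(h-1)+Q/q}\,\|\tilde\phi\|_{L^q(B(e,1))}.
\end{equation*}
Multiplying these, the total exponent of $r$ is $w(h)-w(h-1)+Q/q-Q/p$. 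For $h\le n$ or $h\ge n+2$ one has $w(h)-w(h-1)=1$, while for the single case $h=n+1$ one has $w(n+1)-w(n)=(n+2)-n=2$. This matches the two exponents in the statement.

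For the Sobolev (compactly supported) version, the same scheme applies, replacing the Poincar\'e half of Theorem~\ref{pq poincare} by its Sobolev half: an $h$-form $\omega$ compactly supported in $B(p,r)$ pulls back to a form compactly supported in $B(e,1)$, the theorem delivers a compactly supported primitive $\tilde\phi$ on $B(e,\lambda)$, and pushing forward gives $\phi$ compactly supported in $B(p,\lambda r)$ with the analogous estimate. The exponent $r^{Q/q-Q/p+1}$ (resp.\ $+2$) then comes out by the identical scaling calculation; note that since no loss of domain radius is involved beyond the fixed dilation factor $\lambda$, the rescaling is entirely transparent.

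No step presents a serious obstacle: the main point to verify carefully is that the Rumin-theoretic objects behave correctly under $f$, which follows from contact invariance of the complex $(E_0^\bullet,d_c)$ already established in Section~\ref{Rumin}, plus the weight-dependent conformal factor $r^{w(h)}$ recorded in the excerpt. The rest is bookkeeping of exponents via the change-of-variables formula with Jacobian $r^Q$.
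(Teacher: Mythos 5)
Your proposal is correct and follows exactly the paper's route: the paper's own proof is the one-line remark ``take the pull-back $f^\#\omega$ and apply Theorem \ref{pq poincare}'', and you have simply supplied the scaling bookkeeping (the weight factor $r^{w(h)}$ with $w(h)=h$ for $h\le n$, $w(h)=h+1$ for $h>n$, and the Jacobian $r^Q$) that the authors leave implicit, arriving at the correct exponents $Q/q-Q/p+1$ and $Q/q-Q/p+2$. Your observation that the same $r$-power also appears in the compactly supported (Sobolev) case is consistent with the remark following the theorem, which notes that the constant in \eqref{sobolev interior ball} becomes radius-independent precisely when equality holds in \eqref{kpq 2}.
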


\begin{proof} We have just to take the pull-back $f^\#\omega$ and then apply Theorem \ref{pq poincare}.

\end{proof}

If the choice of $q$ is sharp (i.e. in \eqref{kpq 2} the equality holds), then the constant on
the right hand side of \eqref{sobolev interior ball} is independent of the radius of the ball,
so that a global $\he{}$-$\mathrm{Sobolev}_{p,q}(h)$ inequality holds.

\begin{corollary}\label{strong sobolev}
Take $1\le h\le 2n+1$. Suppose  $1<p<Q$ if  $h\not=n+1$ and $1<p<Q/2$ if $h=n+1$. Let $q\ge p$ defined by
\begin{eqnarray}\label{kpq 4}
 \frac{1}{p}-\frac{1}{q} := \begin{cases}
\frac{1}{Q}      & \text{ if }h\not=n+1, \\
\frac{2}{Q}      & \text{ if }h=n+1.
\end{cases}
\end{eqnarray}
Then    $\he{}$-$\mathrm{Sobolev}_{p,q}(h)$ inequality holds for $1\le h\le 2n+1$.

\end{corollary}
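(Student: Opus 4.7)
The plan is to derive this corollary as an immediate consequence of the scale invariance of the estimates at the sharp endpoint. The preceding theorem already gives interior Sobolev inequalities on arbitrary Korányi balls $B(p,r)$, and its proof (via pull-back through $\tau_p\delta_r$) produces a constant of the form $C\,r^{Q/q-Q/p+1}$ when $h\neq n+1$ and $C\,r^{Q/q-Q/p+2}$ when $h=n+1$. Under the sharp equality \eqref{kpq 4} both exponents vanish, so the constant becomes independent of the radius. This uniform dependence is exactly what promotes the interior inequality to a global one.

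Concretely, let $\omega\in L^p(\he n,E_0^h)$ be a compactly supported $d_c$-closed form, and pick $r>0$ large enough that $\mathrm{supp}\,\omega\subset B(e,r)$. The Sobolev part of the preceding theorem, applied on $B(e,r)$ with enlargement $B(e,\lambda r)$, produces a form $\phi$ compactly supported in $B(e,\lambda r)$ with $d_c\phi=\omega$ on $B(e,\lambda r)$ and
\begin{eqnarray*}
\|\phi\|_{L^q(B(e,\lambda r),E_0^{h-1})}\leq C\,\|\omega\|_{L^p(B(e,r),E_0^h)},
\end{eqnarray*}
where, crucially, $C$ is independent of $r$ by the endpoint cancellation. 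Extending $\phi$ by zero outside $B(e,\lambda r)$ yields a compactly supported primitive on $\he n$; the identity $d_c\phi=\omega$ persists globally as a distributional equality, since $\omega$ vanishes outside $B(e,r)$ and $\phi$ vanishes in a neighbourhood of $\partial B(e,\lambda r)$. Taking the $L^q$-norm on $\he n$ therefore gives $\|\phi\|_{L^q(\he n)}\leq C\,\|\omega\|_{L^p(\he n)}$, which is the strong $\he{}$-$\mathrm{Sobolev}_{p,q}(h)$ inequality.

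There is no genuine obstacle: the substantive work, namely the interior inequality together with the explicit radius dependence of its constant, has already been carried out via the pull-back computation under $\tau_p\delta_r$ using that $f^{\sharp}\alpha=r^h\alpha\circ f$ for $h\leq n$ and $f^{\sharp}\alpha=r^{h+1}\alpha\circ f$ for $h>n$. The content of the corollary is simply the observation that the sharp exponents in \eqref{kpq 4} are precisely those for which the anisotropic dilations act as $L^p\to L^q$ isometries on the relevant piece of Rumin's complex, so that the constant from the interior inequality no longer degrades with the radius.
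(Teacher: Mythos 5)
Your proposal is correct and follows essentially the same route as the paper: the authors deduce the corollary precisely from the remark that, at the sharp exponents \eqref{kpq 4}, the radius-dependent factor $r^{Q/q-Q/p+1}$ (resp. $r^{Q/q-Q/p+2}$) in the interior ball inequality vanishes, so the constant in \eqref{sobolev interior ball} is independent of $r$ and the inequality globalizes. Your additional details (choosing $r$ large enough to contain the support and extending the primitive by zero) merely make explicit what the paper leaves implicit.
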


\section{Contact manifolds and global smoothing}\label{final}

Throughout this section, $(M,H,g)$ will be a sub-Riemannian contact manifold of bounded $C^k$-geometry as in Definition \ref{contact}.
We shall denote by $(E_0^\bullet, d_c)$ both the Rumin's complex in $(M,H,g)$ and in the Heisenberg group.

\begin{proposition}\label{memory} If $\phi $ is a contactomorphism  from an open set $\mc U\subset \he{n}$ to $M$, and we set $\mc V:= \phi(\mc U)$, we have
\begin{itemize}
\item[i)] $\phi^\# E_0^\bullet(\mc V) = E_0^\bullet(\mc U) $;
\item[ii)] $d_c\phi^\# = \phi^\# d_c$;
\item[iii)] if $\zeta$ is a smooth function in $M$, then the differential operator in $\mc U\subset \he n$ defined by $v \to \phi^\#[d_c,\zeta] (\phi^{-1})^\#v$ is a differential operator of order
zero if $v\in E_0^h(\mc U)$, $h\ne n$ and a differential operator of order
1 if $v\in E_0^n(\mc U)$.
\end{itemize}

\end{proposition}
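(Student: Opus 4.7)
The plan is to reduce everything to the contact invariance of Rumin's construction, which was spelled out in the ``Contact manifolds'' subsection, together with a direct computation reducing the commutator on $M$ to a commutator on $\he n$.

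For assertion i), I would argue that the spaces $\mathcal{V}^\bullet$, $\mathcal{W}^\bullet$ and $\mathcal{E}_0^\bullet$ (or $\mathcal{E}'_0$) are defined in terms of $H$, the exterior algebra of $H^*$, and the multiplication operator $L$ induced by $d\theta_{|H}$, which is well defined up to a conformal factor and whose kernel and image are conformally invariant. Since $\phi$ is a contactomorphism, it maps the contact distribution to itself and pulls back a contact form to a contact form, so $\phi^\#$ sends sections of $\mathcal{E}_0^\bullet$ on $\mathcal{V}$ to sections of $\mathcal{E}_0^\bullet$ on $\mathcal{U}$. The inverse $\phi^{-1}$ is also a contactomorphism, so $(\phi^{-1})^\#$ provides the opposite inclusion, yielding the equality.

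For assertion ii), the same contact invariance argument applies to the projectors: $\Pi_E$ and $\Pi_{E_0}$ commute with $\phi^\#$. Since $d$ obviously commutes with $\phi^\#$ and $d_c = \Pi_{E_0}\circ d\circ \Pi_E\circ \Pi_{E_0}$, we get $\phi^\# d_c = d_c \phi^\#$ on $E_0^\bullet(\mathcal{V})$.

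For assertion iii), I would perform the following direct computation. For $v\in E_0^h(\mathcal{U})$, using ii) twice and the identity $\phi^\#(\zeta \omega) = (\zeta\circ\phi)\phi^\# \omega$,
\begin{align*}
\phi^\#[d_c,\zeta](\phi^{-1})^\# v
&= \phi^\# d_c\bigl(\zeta\,(\phi^{-1})^\# v\bigr) - \phi^\#\bigl(\zeta\, d_c (\phi^{-1})^\# v\bigr)\\
&= d_c \phi^\#\bigl(\zeta\,(\phi^{-1})^\# v\bigr) - (\zeta\circ\phi)\,\phi^\# d_c (\phi^{-1})^\# v\\
&= d_c\bigl((\zeta\circ\phi)\,v\bigr) - (\zeta\circ\phi)\, d_c v\\
&= [d_c,\zeta\circ\phi]\,v.
\end{align*}
Thus the operator we care about is nothing but the Heisenberg-group commutator $[d_c,\zeta\circ\phi]$ acting on $E_0^h(\mathcal{U})$, so Lemma \ref{leibniz} gives the claimed order: zero when $h\neq n$ and one (plus a zeroth-order remainder) when $h=n$. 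The $C^k$-boundedness of $\phi$ ensures that the horizontal derivatives of $\zeta\circ\phi$ that enter the coefficients of $P_0^h$ and $P_1^n$ remain controlled, which is what one ultimately needs for the operators $T_j[\chi_j,d_c]$ in Section \ref{final}.

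The main obstacle, such as it is, lies in justifying that the contact invariance of $\mathcal{E}_0$, $\Pi_E$ and $\Pi_{E_0}$ from the general contact-manifold discussion really applies to pullback by a contactomorphism between an open set in $\he n$ and an open set in $M$. Once one accepts that the construction of Rumin's complex is purely contact-invariant, the remaining argument is a short, formal manipulation of pullbacks and commutators.
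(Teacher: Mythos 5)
Your proposal is correct and follows essentially the same route as the paper: i) and ii) from the contact invariance of Rumin's construction, and iii) from the identity $\phi^\#[d_c,\zeta](\phi^{-1})^\#v=[d_c,\zeta\circ\phi]v$ combined with Lemma \ref{leibniz}; your derivation of that identity from ii) just makes explicit what the paper states ``by definition.''
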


\begin{proof} Assertions i) and ii) follow straightforwardly since $\phi$ is a contact map. Assertion iii) follows from Lemma \ref{leibniz}, since, by
definition, 
$$
 \phi^\#[d_c,\zeta] (\phi^{-1})^\#v = [d_c,\zeta\circ\phi] v. 
$$

\end{proof}

\begin{remark}\label{carte} Let $\{\phi_{x_j}(B(e,1))\}$ a countable locally finite subcovering of $\{\phi_{x}(B(e,1))\, , x\in M\}$. From now on, for sake of
simplicity, we shall write $\phi_j:=\phi_{x_j}$. Without loss of generality, we can replace $B(e,1)$ by $B(e,\lambda)$, where $\lambda >1$ is fixed (just
to be congruent in the sequel with the notations of previous sections).

Let $\{\chi_j\}$ be a partition of the unity subordinated to the covering $\{\phi_j(B(e,\lambda))\}$ of $M$. As above, without loss of
generality, we can assume $\phi_j^{-1}(\mathrm{supp}\;\chi_j) \subset B(e,1)$.
\end{remark}

If $u\in L^p(M, E_0^\bullet)$, we write 
$$
u = \sum_j \chi_j u
$$
We can write 
$$
\chi_j u =  (\phi_j^{-1})^\#\phi_j^\#(\chi_j u) =:   (\phi_j^{-1})^\# v_j.
$$
We use now the homotopy formula in $\he{n}$ (see Theorem \ref{smoothing}):
$$
v_j = d_cT v_j +   Td_cv_j + Sv_j \qquad \mbox{in $B(e,1)$.}
$$
Without loss of generality, we can assume that $R>0$ in the definition of the kernel of $T$  has
been chosen in such a way that the $R$-neighborood of $\phi_j^{-1}(\mathrm{supp}\;\chi_j) \subset  B(e,1)$.
In particular $v_j - d_cT v_j -   Td_cv_j $ is supported in $B(0,1)$ and therefore also $Sv_j$ is supported
in $B(0,1)$.

%
In particular, $ (\phi_j^{-1})^\# \big(
d_cTv_j +    Td_cv_j + Sv_j
\big)$ is supported in $\phi_j (B(e,1))$ so that it can be continued by zero on $M$.

Thus
\begin{equation*}\begin{split}
u &= \sum_j  (\phi_j^{-1})^\# \big(
d_cT v_j +    Td_cv_j + S v_j
\big)
\\&=
d_c \sum_j \ (\phi_j^{-1})^\# 
 T  \phi_j^\#(\chi_j  u )
\\&+
\sum_j ( (\phi_j^{-1})^\# 
  T \phi_j^\#\chi_j ) d_c u 
-
\sum_j  (\phi_j^{-1})^\# 
  T \phi_j^\#([\chi_j ,d_c] u )
\\&+
\sum_j ( (\phi_j^{-1})^\# (S\phi_j^\# \chi_j )u.
\end{split}\end{equation*}
We set
\begin{equation}\label{T}
 Tu:= \sum_j   (\phi_j^{-1})^\# 
 T  \phi_j^\# (\chi_j u)
\end{equation}
and 
\begin{equation}\label{S}
Su:= \sum_j  (\phi_j^{-1})^\# S\phi_j^\# ( \chi_j u) -
\sum_j  (\phi_j^{-1})^\# 
  T \phi_j^\#([\chi_j ,d_c]  u).
\end{equation}

The core of this section consists in the following approximate homotopy formula, where
the ``error term'' $S_M$ has the maximal regularising property compatible with the regularity of $M$.

\begin{theorem}\label{homotopy manifold}
Let $(M,H,g)$ be a bounded $C^k$-geometry sub-Riemannian contact manifold, $k\ge 2$.  Then
\begin{equation}\label{homotopy M}
I= d_c T_M+    T_Md_c + S_M,
\end{equation}
where
$$
T_M:= \big(\sum_{i=0}^{k-1}S^i\big)T,  \qquad S_M:= S^{k},
$$
and $T$ 
and $S$ are
defined in \eqref{T} and \eqref{S}.

By definition
\begin{equation}\label{S commuta su M}
d_c Su = S d_c u.
\end{equation}
In addition, the following maps are continuous:
 \begin{itemize}
 \item[i)] $  T_M: W^{-1,p}(M,E_0^{h+1}) \to L^p(M,E_0^{h})$ if $h\neq n$, and $  T_M: W^{-2,p}(M,E_0^{n+1}) \to L^p(M,E_0^{n})$;
 \item[ii)] $T_M: L^p (M,E_0^{h}) \to W^{1,p}(M,E_0^{h-1})$,   $h\neq n+1$, $ T_M:L^p(M,E_0^{n+1}) \to W^{2,p}(M,E_0^{n})$ if $h=n+1$,
 \item[iii)] $S_M:L^p(M,E_0^h)\to W^{k,p}(M,E_0^{h})$.
\end{itemize}

\end{theorem}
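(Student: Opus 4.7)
The one-step formula $I = d_c T + T d_c + S$ on $M$ is already assembled in the construction that immediately precedes the theorem: one applies the Heisenberg-ball identity $v_j = d_c T v_j + T d_c v_j + S v_j$ to each $v_j = \phi_j^\#(\chi_j u)$, pulls back through the contactomorphism $\phi_j^{-1}$ (Proposition \ref{memory} (ii) gives $\phi_j^\# d_c = d_c \phi_j^\#$), and sums. The commutator $[\chi_j,d_c]$ enters $S$ as the obstruction to $d_c$ commuting with multiplication by $\chi_j$. Local finiteness of the cover together with the fact that each local piece is supported in $\phi_j(B(e,1))$ makes the sums pointwise finite on $M$.

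I would then transfer the one-step mapping properties of $T$ and $S$ to $M$ via Lemma \ref{pullback} and the Sobolev norm defined in Section \ref{function spaces}. The key regularity point is that $S$ gains exactly one derivative: $S : W^{\ell,p}(M,E_0^h) \to W^{\ell+1,p}(M,E_0^h)$ whenever both sides lie in the admissible range. One summand of $S$ uses the local smoothing operator of Remark \ref{smoothing negative} and is harmless. The delicate summand is $(\phi_j^{-1})^\# T \phi_j^\#([\chi_j,d_c]u)$: by Proposition \ref{memory} (iii) (equivalently Lemma \ref{leibniz}), $[\chi_j,d_c]$ is of order $0$ when $h\ne n$, so $T$ (which gains one derivative off the critical degree) gains one; when $h=n$, the commutator contains a first-order piece but the resulting form lives in degree $n+1$, where $T$ gains two derivatives, so the net gain is still one. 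The identity $d_c S = S d_c$ is obtained by applying $d_c$ on the left of $I = d_c T + T d_c + S$ (using $d_c^2 = 0$) and comparing with the same formula evaluated at $d_c u$.

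The final identity is then algebra. Since $d_c$ commutes with every $S^i$,
\begin{eqnarray*}
d_c T_M + T_M d_c = \sum_{i=0}^{k-1} S^i(d_c T + T d_c) = \sum_{i=0}^{k-1} S^i(I - S) = I - S^k,
\end{eqnarray*}
which is exactly $I = d_c T_M + T_M d_c + S_M$ with $S_M = S^k$. The regularity of $S_M$ follows by iterating the one-derivative gain of $S$ a total of $k$ times, landing in $W^{k,p}$. For $T_M$, the $i=0$ term inherits the mapping properties of $T$ from Theorem \ref{smoothing}, and the higher-$i$ terms land in strictly more regular Sobolev classes continuously contained in the target. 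The main obstacle is the regularity bookkeeping at the middle step: one must close the one-derivative gain of $S$ uniformly across all degrees, especially at $h=n$ where $d_c$ is second-order and $[\chi_j, d_c]$ therefore loses a derivative; the fact that $T$ compensates with an extra order of smoothing precisely at that degree is what makes the iteration reach $W^{k,p}$ rather than stagnate at some lower regularity.
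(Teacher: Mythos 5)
Your proposal is correct and follows essentially the same route as the paper: a one-step lemma establishing $I=d_cT+Td_c+S$ with $S$ gaining exactly one derivative (the delicate term $(\phi_j^{-1})^\# T\phi_j^\#[\chi_j,d_c]$ handled by the order-$0$/order-$1$ dichotomy of the commutator and the extra smoothing of $T$ in degree $n+1$), followed by the telescoping sum $\sum_{i=0}^{k-1}S^i(I-S)+S^k=I$ using $d_cS=Sd_c$. The regularity bookkeeping you flag as the main obstacle is precisely the content of the paper's Lemma on the one-step operators, so nothing is missing.
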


In order to prove Theorem \ref{homotopy manifold}, let us prove the following 
preliminary result:
\begin{lemma}\label{homotopy manifold lemma}
Let $(M,H,g)$ be a bounded $C^k$-geometry sub-Riemannian contact manifold. If $2\le\ell\le k-1$ and $T$
and $S$ are
defined in \eqref{T} and \eqref{S}, then 
\begin{equation}\label{homotopy M0}
I= d_c T+   Td_c + S.
\end{equation}
In addition, the following maps are continuous:
 \begin{itemize}
 \item[i)] $ T: W^{-1,p}(M,E_0^{h+1}) \to L^p(M,E_0^{h})$ if $h\neq n$, and $ T: W^{-2,p}(M,E_0^{n+1}) \to L^p(M,E_0^{n})$;
 \item[ii)] $T: L^p (M,E_0^{h}) \to W^{1,p}(M,E_0^{h-1})$,   $h\neq n+1$, $  T: L^p(M,E_0^{n+1}) \to W^{2,p}(M,E_0^{n})$ if $h=n+1$,
 \item[iii)] if $1\le\ell\le k$, then $S: W^{\ell-1,p}(M, E_0^h) \longrightarrow W^{\ell,p}(M, E_0^h)$.
\end{itemize}

\end{lemma}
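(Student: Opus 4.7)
\medskip

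\noindent\emph{Proof plan.} The homotopy formula \eqref{homotopy M0} is essentially the calculation displayed in the paragraphs immediately preceding the statement of Theorem \ref{homotopy manifold}: decompose $u=\sum_j\chi_j u$ via the partition of unity, apply Theorem \ref{smoothing} to each $v_j=\phi_j^\#(\chi_j u)$ on $B(e,1)$, push the result back to $M$ by $(\phi_j^{-1})^\#$, and use $d_c(\chi_j u)=\chi_j d_c u +[\chi_j,d_c]u$ together with Proposition \ref{memory} (ii) to move $d_c$ past the pullbacks. The compact support assertion for $Sv_j$ (ensured by choosing $R$ smaller than the distance from $\phi_j^{-1}(\mathrm{supp}\,\chi_j)$ to $\partial B(e,1)$) guarantees that each term extends by zero to a well-defined section on $M$, so the sum makes sense under local finiteness of the covering.

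For the continuity of $T$, I would work directly with the intrinsic Sobolev norm defined in Section \ref{function spaces}. Fix any chart index $i$; then
$$
\phi_i^\#(Tu)=\sum_{j:\phi_j(B(e,\lambda))\cap \phi_i(B(e,\lambda))\neq\emptyset}\psi_{ij}^\#\,T\,\phi_j^\#(\chi_j u),
$$
where $\psi_{ij}=\phi_j^{-1}\circ\phi_i$ is a $C^k$-bounded contact diffeomorphism defined on an open subset of $B(e,\lambda)$. Bounded $C^k$-geometry ensures that the number of nonzero summands is bounded independently of $i$. Applying Theorem \ref{smoothing} (ii) to bound $T\phi_j^\#(\chi_j u)$ in $W^{1,p}(B)$ (respectively $W^{2,p}(B)$ when $h=n+1$) by $\|\phi_j^\#(\chi_j u)\|_{L^p(B')}$, then invoking Lemma \ref{pullback} for the pullbacks by $\psi_{ij}$ and by $\phi_j$, yields
$$
\|\phi_i^\#(Tu)\|_{W^{1,p}(B)}^p \le C\sum_{j\sim i}\|\chi_j u\|_{L^p(\phi_j(B(e,\lambda)))}^p .
$$
Summing over $i$ and reversing the order of summation (each $j$ contributes to at most a bounded number of $i$'s) gives the $L^p\to W^{1,p}$ bound on $M$; the case $h=n+1$ and the negative-index bound (i) are identical, using the corresponding assertions of Theorem \ref{smoothing}.

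For the continuity of $S$, I would split
$$
S=S_{\mathrm{sm}}+S_{\mathrm{comm}},\qquad S_{\mathrm{sm}}u:=\sum_j (\phi_j^{-1})^\#S\phi_j^\#(\chi_j u),\qquad S_{\mathrm{comm}}u:=-\sum_j (\phi_j^{-1})^\#T\phi_j^\#([\chi_j,d_c]u).
$$
For $S_{\mathrm{sm}}$, Remark \ref{smoothing negative} gives $S:W^{\ell-1,p}(B',E_0^\bullet)\to W^{\ell,p}(B,E_0^\bullet)$ for every $0\le \ell\le k$; chaining this with Lemma \ref{pullback} and the same finite-overlap summation as above produces $S_{\mathrm{sm}}:W^{\ell-1,p}(M,E_0^h)\to W^{\ell,p}(M,E_0^h)$. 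For $S_{\mathrm{comm}}$, the commutator $[\chi_j,d_c]$ is analyzed by Lemma \ref{leibniz} and Proposition \ref{memory} (iii): on $E_0^h$ with $h\neq n$ it is a degree-zero operator, so $[\chi_j,d_c]u\in W^{\ell-1,p}$ and Theorem \ref{smoothing} (ii) applied to $T$ in degree $h+1\neq n+1$ buys one derivative back, giving $W^{\ell,p}$; on $E_0^n$ the commutator has order one, so $[\chi_j,d_c]u\in W^{\ell-2,p}$, but precisely in this case $T$ acts on $E_0^{n+1}$ and gains two derivatives by Theorem \ref{smoothing} (ii), again yielding $W^{\ell,p}$. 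The finite-overlap summation closes the argument.

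The main obstacle is the bookkeeping at the middle dimension $h=n$: the jump in the order of $d_c$, and hence of $[\chi_j,d_c]$, is exactly compensated by the extra derivative gained by $T$ on $(n+1)$-forms, and making this match precise requires invoking the dual statements in Lemma \ref{leibniz} together with the sharp regularity of $T$ given by Theorem \ref{smoothing}. The restriction $1\le\ell\le k$ enters because Lemma \ref{pullback} requires $|\ell|\le k-1$ together with $\ell-1\le k-1$, and the second-order commutator in the middle degree costs one additional derivative, so bounded $C^k$-geometry with $k\ge 2$ is exactly what is needed.
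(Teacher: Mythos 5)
Your proposal is correct and follows essentially the same route as the paper: localize with the partition of unity, transfer to the Heisenberg ball where Theorem \ref{smoothing} applies, and observe that the order-one commutator $[\chi_j,d_c]$ in middle degree is exactly compensated by the two derivatives gained by $T$ on $(n+1)$-forms, before summing over the locally finite covering. The paper is terser (it invokes the chart-independent equivalence of Sobolev norms once and then cites Theorem \ref{smoothing} and Lemma \ref{leibniz}), but the decomposition, the key compensation at $h=n$, and the finite-overlap summation are identical to yours.
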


\begin{proof} First of all, we notice that, if $\alpha$ is supported in $\phi_j(B(e,\lambda))$, then, by Definition
\ref{contact} the norms
$$
\|\alpha\|_{W^{m,p}(M,E_0^\bullet)} \qquad \mbox{and}\qquad \|\phi_j^\#\alpha\|_{W^{m,p}(\he n,E_0^\bullet)}
$$
are equivalent for $-k\le m\le k$, with equivalence constants independent of $j$. Thus, assertions i) and ii)
follow straightforwardly from Theorem \ref{smoothing}.

\medskip

To get iii)  we only need to note that 
the operators $ (\phi_j^{-1})^\# 
 T \phi_j^\#[\chi_j ,d_c]$ are bounded from $W^{\ell-1,p}(M, E_0^\bullet)\to W^{\ell,p}(M, E_0^\bullet)$ in every degree. 
Indeed, by Lemma \ref{leibniz} above,  the differential operator in $\he n$ $\phi_j^\#[\chi_j ,d_c](\phi_j^{-1})^\#$ has order 1  if $h=n$, and
order 0 if $h\neq n$.  Since the kernel of $ T$ can be estimated by kernel of type 2 if
acts on forms of degree $h=n$, and of type 1 if
acts on forms of degree $h \neq n$, the assertion follows straightforwardly

Summing up in $j$ and keeping into account that the sum is locally finite, we obtain:
\begin{equation*}\begin{split}
\| \sum_j  & \phi_j^\# 
 T_j  (\phi_j^{-1})^\#[\chi_j ,d_c]\|_{W^{\ell,p}(M)}
\le
 \sum_j\| \phi_j^\# 
 T_j  (\phi_j^{-1})^\#[\chi_j ,d_c]\|_{W^{\ell,p}(\phi (\mc U_j))}
\\&
\le
C  \sum_j\| 
 T_j  \phi_j^\#[\chi_j ,d_c]\|_{W^{\ell,p}(\mc U_j) }
\le
C \sum_j 
 \| \phi_j^\# u\|_{W^{\ell-1,p}(U_j)}
\\&
 \le
 C \| u\|_{W^{\ell-1,p}(M)}.
\end{split}\end{equation*}

\end{proof}

\begin{proof}[Proof of Theorem \ref{homotopy manifold}]

By \eqref{S commuta su M}
\begin{equation*}\begin{split}
d_c T_M &+ T_Md_c + S_M 
\\&
= d_c \big(\sum_{i=0}^{k-1}S^i\big)T + \big(\sum_{i=0}^{k-1}S^i\big)\tilde Td_c
+ S^k
\\&
=\sum_{i=0}^{k-1}S^i \big(d_cT +  Td_c\big)
+S^k
\\&
=\sum_{i=0}^{k-1}S^i \big(I-S)
+S^k =I.
\end{split}\end{equation*}
Then statements i), ii) and iii) follow straightforwardly from i), ii) and iii) of
Lemma \ref{homotopy manifold lemma}.

\end{proof}

\section*{Acknowledgments}
B.~F. and A.~B. are supported by the University of Bologna, funds for selected research topics, and by MAnET Marie Curie
Initial Training Network, by 
GNAMPA of INdAM (Istituto Nazionale di Alta Matematica ``F. Severi''), Italy, and by PRIN of the MIUR, Italy.

P.P. is supported by MAnET Marie Curie
Initial Training Network, by Agence Nationale de la Recherche, ANR-10-BLAN 116-01 GGAA and ANR-15-CE40-0018 SRGI. P.P. gratefully acknowledges the hospitality of Isaac Newton Institute, of EPSRC under grant EP/K032208/1, and of Simons Foundation.

\bibliographystyle{amsplain}

\bibliography{BFP2_submitted}

\bigskip
\tiny{
\noindent
Annalisa Baldi and Bruno Franchi 
\par\noindent
Universit\`a di Bologna, Dipartimento
di Matematica\par\noindent Piazza di
Porta S.~Donato 5, 40126 Bologna, Italy.
\par\noindent
e-mail:
annalisa.baldi2@unibo.it, 
bruno.franchi@unibo.it.
}

\medskip

\tiny{
\noindent
Pierre Pansu 
\par\noindent Laboratoire de Math\'ematiques d'Orsay,
\par\noindent Universit\'e Paris-Sud, CNRS,
\par\noindent Universit\'e
Paris-Saclay, 91405 Orsay, France.
\par\noindent
e-mail: pierre.pansu@math.u-psud.fr
}

\end{document}